\newtheorem{theorem}{Theorem}[section]
\newtheorem{definition}[theorem]{Definition}
\newtheorem{proposition}[theorem]{Proposition}
\newtheorem{notation}[theorem]{Notation}
\newtheorem{claim}[theorem]{Claim}
\newtheorem{lemma}[theorem]{Lemma}
\newtheorem{remark}[theorem]{Remark}
\newtheorem{corollary}[theorem]{Corollary}
\newtheorem{prop}[theorem]{Proposition}
\newtheorem{corol}[theorem]{Corollary}
\newcommand{\E}{{\mathbb E}}
\newcommand{\N}{{\mathbb N}}
\newcommand{\Z}{{\mathbb Z}}
\renewcommand{\P}{{\mathbb P}}
\newcommand{\Q}{{\mathbb Q}}
\newcommand{\R}{{\mathbb R}}
\newcommand{\x}{{\mathbf x}}
\newcommand{\y}{{\mathbf y}}
\renewcommand{\u}{{\mathbf u}}
\renewcommand{\v}{{\mathbf v}}
\newcommand{\w}{{\mathbf w}}
\newcommand{\WW}{{\mathcal W}}
\renewcommand{\qed}{$\blacksquare$}
\title{Transmission and navigation on disordered lattice networks, directed spanning forests and Brownian web}
\author{
	\begin{tabular}{c}
		{Subhroshekhar Ghosh}  
		 \footnote{Department of Mathematics, National University of Singapore, Singapore 119076.}$^{\hspace{5pt}, }$\footnote{Dept. of Statistics and Applied Probability, National University of Singapore, Singapore 117546.}
		 \\ subhrowork@gmail.com
	\end{tabular}
	\and
	\begin{tabular}{c}
		{Kumarjit Saha}
		  \footnote{Dept. of Mathematics, Ashoka University, N.C.R., Sonepat, Haryana 131029, India.}
		  \\ kumarjit.saha@ashoka.edu.in
	\end{tabular}
}
\date{}
\begin{document}

\maketitle

\begin{abstract}
Stochastic networks based on random point sets as nodes have attracted considerable interest in many applications, particularly in communication networks, including wireless sensor networks,  peer-to-peer networks and so on. The study of such networks generally requires the nodes to be independently and uniformly distributed as a Poisson point process. In this work, we venture beyond this standard paradigm and investigate  the stochastic geometry of  networks obtained from  \textit{directed spanning forests} (DSF) based on randomly perturbed lattices, which have desirable statistical properties as a models of spatially dependent point fields. In the regime of low disorder, we show in 2D and 3D that the DSF almost surely consists of a single tree. In 2D, we further establish that the DSF, as a collection of paths, converges under diffusive scaling to the Brownian web.

\end{abstract}

\section{Introduction and main results}
Spatial networks have long been an important class of models for understanding the large scale behaviour of systems in a wide array of applications. These include, but are not limited to, transport networks, power grids, various kinds of social networks, different types of  communication networks including wireless sensor networks, multicast communication networks, peer-to-peer networks and drainage networks, to name a few. In the mathematical study of such networks, an important modelling hypothesis is the random distribution of their nodes. This often serves to capture the macroscopic properties of highly complex networks, in addition to facilitating theoretical analysis. For a partial overview of the literature, we refer the reader to \cite{BB09}, \cite{BB10}, \cite{B11}, \cite{P03} and the references therein.

In the context of communication networks, the study of particular structures like \textit{radial spanning trees} and \textit{directed spanning forests} have gained considerable attention (see, e.g., \cite{BB07}, \cite{B08}, \cite{BKR99}, \cite{EGHK99}, and the references contained therein). These structures often represent broadly related concepts, and in fact, DSFs can be seen as the limit of radial spanning trees far away from the origin. Such structures help in the study of localized co-ordination protocols in networks which are also aimed to be scalable with network size. These have applications to a wide variety of problems, including small world phenomena, computational geometry, decentralised navigation in networks, to mention a few (for details, we refer the interested reader to \cite{K00}, \cite{KSU99}, \cite{PRR99}, and the references therein). In summary, network structures such as the DSF are important theoretical models to study fundamental questions of transmission and navigation on real-world networks.  

Generally speaking, the distribution of the random nodes in stochastic networks is taken to be the independent and uniform over space, in other words, the Poisson distribution and its variants (see, e.g., \cite{MR96}, \cite{P03}). The Poisson model is highly amenable to rigorous mathematical treatment, but is often limited in its effectiveness as a model - e.g., on a global scale the homogeneous Poisson process exhibits clusters of points interspersed with vacant spaces, whereas a more \textit{spatially uniform} distribution might be a closer representation of ground realities (see, e.g., \cite{GL17}). However, little is understood about the stochastic geometry of networks arising from such strongly correlated point processes, principally because the tools and techniques for studying the Poisson model overwhelmingly rely on  its exact spatial independence. 

In this work, we examine spatial network models, specifically directed spanning forests, on random point sets that are obtained as disordered lattices on Euclidean spaces. Such point processes exhibit much greater measure of \textit{spatial homogeneity} compared to the Poisson process on one hand, while still retaining a measure of analytical tractability on the other. A powerful manifestation of their relative orderliness is the fact that they are \textit{ hyperuniform}. Hyperuniformity of point processes have attracted a lot of interest in recent years, especially in the statistical physics literature (see, e.g., \cite{T02}, \cite{TS03}, \cite{GL17} and the references therein). A point process is said to be hyperuniform if the variance of the number of points in an expanding domain scales like its surface area (or slower), rather than its volume, which is the case for Poisson or any other \textit{extensive} system that exhibits FKG-type properties. In fact, hyperuniformity is closely related to negative association at the spatial level, which precludes the application of many arguments that are ordinarily staple in stochastic geometry.  In the subsequent  paragraphs, we lay out the details of the model and give an account of our principal results.


We consider a disordered, or perturbed, version of the $d$ dimensional Euclidean lattice $\Z^d$. 
Consider the $d$-dimensional (closed) box centred at the origin $[-1, + 1]^d$.
Let $\{U_\w : \w \in \Z^d \}$ denote a collection of i.i.d. random variables 
(r.v.) such that each r.v. is uniformly distributed over the region $[-1, + 1]^d$.
For $\x  := (\x(1),\x(2),\dotsc,\x(d)) \in \R^d$ let $\x(i)$ denote the $i$-th coordinate of $\x$.
For a lattice vertex $\u \in \Z^d$, the corresponding 
perturbed point is given by $\u^\prime := \u + U_\u $.

\noindent \textbf{The model : }
The set  of (randomly) perturbed points, referred to as the vertex set $V$, is defined as
$V := \{ \w + U_\w : \w \in \Z^d \}$.

Here and henceforth, the quantity $  ||\x ||_{p} $ for $p \geq 1$ denotes  the $l_p$ norm of $\x$ in $ \R^d$.  For $\x \in \R^d$, the notation $h(\x)$ denotes the closest 
point in $V$ with respect to the $||\quad||_1$ distance
 with strictly higher $d$-th coordinate. Formally 
\begin{align}
\label{def:h-step}
h(\x) := \text{argmin} \{ ||\y-\x||_1 : \y \in V, \y(d) > \x(d) \}.
\end{align}
We highlight the fact that for any $\x \in \R^d$, the point $h(\x)$ is defined and 
it is unique almost surely (a.s.). 
We call the point $h(\x)$ as the next step starting from the point $\x$.
For $\x \in V$, the edge joining $\x $ and $h(\x)$ is denoted by $\langle \x, h(\x)\rangle$ and $E := \{\langle \x, h(\x)\rangle : \x \in V \}$ denotes the set of all edges. 
Let $\{ e_1, \cdots , e_d\}$ denote the standard orthonormal basis for $\R^d$.
The directed spanning forest (DSF) on $V$ with direction $e_d$ is the random graph $G := (V, E)$ consisting of the vertex set $V$ and the edge set $E := \{\langle \x, h(\x)\rangle : \x \in V \}$. By construction, each vertex $\x$ has exactly one outgoing edge, viz., $\langle \x, h(\x)\rangle$ and hence the random graph $G$ does not have a loop or cycle a.s. 

In this paper, we study the random graph $ G:= (V, E)$, 
which we will refer to as the {\em perturbed DSF}.  
We will study it only for dimensions $2$ and $3$.   
Our first main result shows that the random graph $G$ is connected a.s.

\begin{theorem}
\label{thm:TreeDSF}
For $d=2$ and $d=3$ the random graph $G$ is connected  and consists of a single tree a.s.
\end{theorem}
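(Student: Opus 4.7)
The plan is to reduce the single-tree property to an almost-sure coalescence statement for two forward paths, and then to establish that coalescence via a recurrence-plus-local-coupling argument that exploits the dimensional restriction $d \in \{2,3\}$. Since every vertex $\x \in V$ has a unique outgoing edge to $h(\x)$ with strictly larger $d$-th coordinate, $G$ is automatically acyclic and each connected component is a tree carrying a well-defined infinite forward path $\x, h(\x), h^2(\x), \ldots$. It therefore suffices to prove that for any two distinct starting vertices $\x, \y \in V$, the orbits $(\x_n)_{n \ge 0} := (h^n(\x))_{n \ge 0}$ and $(\y_n)_{n \ge 0} := (h^n(\y))_{n\ge 0}$ eventually meet with probability one; a union bound over the countable set $V \times V$ will then deliver a single tree almost surely.

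The first substantive ingredient I would establish is a local coalescence lemma: uniformly over configurations with $\|\x_n - \y_n\|_\infty \leq C$ for some fixed constant $C$, there exists $\delta > 0$ such that within a bounded number of further steps the two paths land on a common vertex with probability at least $\delta$. One can produce this by constructing a favorable perturbation configuration in a bounded region above the two endpoints: with positive probability some lattice point $\w \in \Z^d$ has its perturbation $U_\w$ placing $\w + U_\w$ inside a narrow strip where it is simultaneously the $\|\cdot\|_1$-closest upward point of both $\x_n$ and $\y_n$, while no competing lattice point does so. Boundedness of the support of the $U_\w$ and their mutual independence yield a uniform lower bound on this probability.

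The second ingredient is the analysis of the transverse difference $D_n := \pi(\x_n - \y_n)$, where $\pi : \R^d \to \R^{d-1}$ is the projection onto the first $d-1$ coordinates. The symmetry of the perturbation distribution under reflections of each transverse coordinate, combined with the independence of well-separated $U_\w$, suggests that $D_n$ behaves like a mean-zero random walk with bounded increments, up to controllable dependencies that decay in $\|D_n\|$. For $d=2$ the increments live in $\R$ and for $d=3$ in $\R^2$; in both cases a suitable martingale decomposition together with an invariance principle should show that $D_n$ returns to any fixed neighborhood of the origin infinitely often almost surely. Each such return feeds an attempt at the local coalescence lemma, and by extracting renewal-type conditionally independent attempts separated by large gaps in the $d$-th coordinate, a Borel--Cantelli argument forces eventual coalescence.

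The main obstacle is making the random-walk analysis of $D_n$ rigorous prior to coalescence. Once the two paths are close, the search windows used to determine $h(\x_n)$ and $h(\y_n)$ overlap, so successive increments of $D_n$ are neither independent nor cleanly Markovian. The delicate task is to extract a genuine martingale part with controlled quadratic variation and to show the residual dependence is summable. The restriction $d \leq 3$ is essential: for $d \geq 4$ the transverse random walk is transient and the strategy breaks down, and even at $d=3$ the two-dimensional transverse walk is only barely recurrent, so sharp second-moment estimates on the increments are needed to invoke a recurrence criterion.
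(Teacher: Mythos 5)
Your high-level skeleton (reduce to coalescence of two forward paths, view the transverse difference as an approximately mean-zero walk, use recurrence in the transverse dimension $d-1\le 2$ plus a uniform positive-probability local coalescence and Borel--Cantelli) does match the spirit of the paper. However, the step you yourself flag as ``the delicate task'' is precisely where the substance of the proof lies, and your proposal does not supply the ideas that make it work. The forward process is not Markov: each step rules out perturbed points in an explored $\ell_1$-ball, and, unlike the Poisson or independent-site models, the perturbed-lattice environment has no spatial independence to fall back on outside the explored region. The paper's resolution is a renewal construction --- $m_d$ consecutive ``up'' steps ending in a ``special up'' step --- which simultaneously flushes the history region (whose height is uniformly bounded, Lemma \ref{lem:HistoryHeightBound}) and pins down the local environment around the current lattice sites, together with the nontrivial estimate that the gaps between renewals have exponential tails (Proposition \ref{prop:BetaExpTail}); only at these renewal times does one get i.i.d.\ increments for one path and a Markov chain $Z_j$ for the difference. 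Your sketch invokes ``renewal-type conditionally independent attempts'' only at the Borel--Cantelli stage, not as the foundation of the random-walk analysis, and gives no mechanism for producing them in this dependent environment.

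Moreover, your route to recurrence is not the one that can be carried out here. You propose a martingale decomposition of $D_n$ plus an invariance principle; the authors state explicitly that they were unable to show even $Z_j$ is a martingale, and instead (for $d=2$) obtain mean-zero increments only on a high-probability separation event, by a resampling argument that swaps the perturbed configuration in two disjoint boxes around the two paths between consecutive renewals (Corollary \ref{corol:ZprocessRwalkProperties}), which then feeds the hitting-time estimate of \cite{CSST19} and yields the quantitative tail $\P(T(\u,\v)>t)\le C_0|\v(1)-\u(1)|/\sqrt{t}$ needed later for the Brownian web. Your global reflection-symmetry heuristic does not directly give this, because the two paths share one environment and their exploration regions interact exactly when they are close. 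For $d=3$ the issue is sharper still: an invariance principle does not by itself give neighborhood recurrence of a non-Markovian, only-approximately-homogeneous process in two transverse dimensions. The paper instead couples the joint process at renewal times with two independent DSF paths (with exponentially small coupling error in the separation), and proves recurrence of the modified chain via Foster's criterion with the Lyapunov function $f(\v)=\sqrt{\log(1+\|\v\|_2^2)}$, which requires precise control of second and higher moments of the increments. Without these ingredients --- the renewal steps with exponential tails, the swap/resampling symmetry on the separation event in $d=2$, and the independent-path coupling plus Foster--Lyapunov argument in $d=3$ --- the proposal names the obstacle but does not overcome it, so there is a genuine gap.
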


The study of the directed spanning forest (DSF) on the Poisson point processes was initiated in \cite{BB07}. The intricate dependencies caused by the construction of edges based on Euclidean distances, makes this model hard to study. In fact, the question posed by Baccelli and Bordenave \cite{BB07}
regarding connectivity of the Poisson DSF  
remained open for quite some time and finally, Coupier and Tran \cite{CT13} 
  proved that for $d=2$ the DSF is a tree almost surely. 
Their argument is based on a  Burton-Keane type argument and crucially depends on the planarity structure of $\R^2$ and can not be applied for higher dimensions. In this context, it is useful 
to mention here that a discrete directed spanning forest created on a random subset of $\Z^d$
was studied in \cite{RSS16} for all $d \geq 2$. It was proved that for $d=2, 3$, the discrete 
DSF is connected a.s.   and for $d\geq 4$ it consists of infinitely many disjoint trees.
The discrete DSF model also enjoys a Poisson point process type behaviour, viz., outside the 
explored region, vertices are independently distributed, a crucial property which we don't have 
for the perturbed lattice points considered here.

There are other random directed graphs studied in the literature 
for which the dichotomy in dimensions of having a single connected tree vis-a-vis a forest has been studied (see  \cite{FLT04},  \cite{GRS04},  \cite{ARS08}). However, the mechanisms
used for construction of  edges for all these models mentioned above, incorporate much 
more independence than that is available for the
DSF. For these models it has been  proved that the random graph is a connected tree in dimensions $2$ and
$3$, and a forest with infinitely many tree components in dimensions $4$ and more.
It is important to observe that for all the above mentioned models, the vertices are independently distributed over disjoint regions - at small as well as large mutual separations -
and this property was crucially used in the analysis of these models.
 On the other hand, for the disordered lattice models  this property no longer holds true in general,  even in the regime of low disorder, and we require new stochastic geometric techniques to overcome the difficulties posed by the long-ranged dependencies arising there from.
It is useful to mention here that though the perturbed DSF is constructed based on $||\quad||_1$ 
distance only, because of small regime of perturbations, similar arguments hold for $||\quad||_p$ for any $p \geq 1$.

Our next main result is  that for $d=2$ the random graph $G$ observed as a collection of paths,
converges  to the Brownian web under a suitable diffusive scaling.
The standard Brownian web originated in
the work of Arratia \cite{A79}, \cite{A81}
as the scaling limit of the voter model on $\Z$. It arises naturally as the diffusive
scaling limit of the coalescing simple random walk paths starting from every point
on the oriented lattice $\Z^2_{\text{even}} := \{(m,n) : m + n 
\text{ even} \}$. Intuitively, the Brownian web can be thought of as a collection
of one-dimensional coalescing Brownian motions starting from every point in the space time
plane $\R^2$. Later  Fontes \textit{et. al.} \cite{FINR04} provided a framework in
which the Brownian web is realized as a random variable taking values in a Polish space.
In the next section we present the relevant topological details from \cite{FINR04}.

For any $\x \in \R^d$, we set $h^0(\x) = \x$ and for $k \geq 1$, let 
$h^{k}(\x) := h(h^{k-1}(\x))$. We consider the random graph $G$ for $d=2$. 
For $\x \in V$, the path starting from $\x$ denoted by $\pi^\x$ is obtained by 
linearly joining the successive steps $h^k(\x)$ for $k \geq 1$ and hence 
we have $\pi^{\u}(h^{k}(\u)(2)) = h^{k}(\u)(1)$ for every $k \geq 0$.
Let ${\cal X} := \{\pi^{\u}:\u \in V\}$ denote the collection of all DSF paths.
For given  $\gamma , \sigma > 0$ and for any $n \in \N$, the $n$-th scaled version of 
path $\pi$ is given by $\pi_n(\gamma,\sigma) (t)= \pi(n^2 \gamma t)/n \sigma$ and 
 ${\cal X}_n(\gamma,\sigma) :=
\{\pi_n^{\u}(\gamma,\sigma):\u \in V\}$ denotes the collection of all the scaled paths.
Let $\overline{{\cal X}_n}(\gamma,\sigma)$ denote the closure of ${\cal X}_n(\gamma,\sigma)$
w.r.t. certain metric $d_\Pi$ as in \cite{FINR04}. In Section \ref{sec:BwebIntro}
we will explain this metric $d_\Pi$ in detail. 
Now we are ready to state our second theorem regarding convergence to the Brownian web. 
\begin{theorem}
\label{thm:BW}
There exist $\sigma > 0$ and $\gamma > 0$ such
that as $n\rightarrow \infty$,  $\overline{{\cal X}}_n(\gamma,\sigma)$ converges weakly to the
standard Brownian Web ${\cal W}$.
\end{theorem}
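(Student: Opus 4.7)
The plan is to verify the standard FINR criteria for weak convergence of a collection of paths to the standard Brownian web, as set out in \cite{FINR04} and as used (in a Poisson DSF context) in analogous prior works. Concretely, I would verify (I) tightness of the collection $\overline{{\cal X}_n}(\gamma,\sigma)$ in the FINR path space, (B1) convergence of the finite dimensional distributions to coalescing Brownian motions, and (B2) a wedge/density condition ensuring that in the limit no extra paths appear. The invariance principle will furnish the scaling constants $\sigma$ and $\gamma$.

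\textbf{Regeneration structure and single path invariance principle.} The starting observation is that $\{U_\w:\w \in \Z^d\}$ are i.i.d.\ and bounded, so that for any $\x$ the candidate set defining $h(\x)$ in \eqref{def:h-step} lies in a bounded neighborhood of $\x$. I would introduce a notion of \emph{regeneration height}: a random level $\tau$ in the $e_d$--direction above which the trajectory $\{h^k(\x)\}$ is a measurable function only of the fresh perturbations $\{U_\w\}$ lying in a bounded slab around the path. A regeneration event can be engineered by requiring that all perturbations in a thin horizontal slab are simultaneously ``well-behaved'' (for example, contained in a small sub-box of $[-1,1]^d$), which occurs with positive probability and thus yields an i.i.d.\ geometric spacing between successive regeneration heights. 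This reduces the path $k \mapsto h^k(\x)(1)$ to a sum of i.i.d.\ increments (between consecutive regeneration levels), to which Donsker's theorem applies, giving a Brownian motion limit with explicit $\sigma^2$ (the variance of the horizontal displacement in a regeneration block) and $\gamma$ (the reciprocal of the mean vertical distance per block).

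\textbf{Coalescence and joint convergence.} For two starting points the paths coalesce almost surely by Theorem \ref{thm:TreeDSF}, since the DSF is a single tree in $d=2$. To promote this a.s.\ coalescence into convergence to coalescing Brownian motions under diffusive scaling, I would estimate the tail of the coalescence time by studying the difference process between two paths. Between joint regeneration events the difference behaves like a (discrete time) martingale increment with a uniform drift toward zero whenever the two paths come within $O(1)$ of each other; combining this with a Lyapunov-type argument or a comparison with a symmetric recurrent random walk shows that the meeting time of two paths started at horizontal separation $xn$ is of order $n^2$, which matches the diffusive scaling. Joint convergence of any finite collection to coalescing Brownian motions then follows from the single-path invariance principle combined with this coalescence estimate, using that paths evolve essentially independently until they come within $O(1)$ of each other.

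\textbf{Tightness, wedge condition, and main obstacle.} Tightness in the FINR topology reduces to a uniform modulus of continuity estimate for the scaled paths, which follows from moment bounds on regeneration block increments (the blocks have geometric tails and bounded per-step displacement). Condition (B2) requires showing that the density of distinct paths crossing a small space--time wedge is not too large; this can be established by showing that two paths which enter a common small region must have coalesced shortly before, so that the number of distinct paths crossing a wedge has an exponentially decaying tail. The \textbf{main obstacle} is the lack of exact spatial independence inherent in the perturbed-lattice model: unlike the Poisson DSF, conditioning on the history of a path does not leave the complement as a fresh point process. Consequently, establishing the regeneration structure and the requisite independence of blocks requires careful stochastic-geometric control — in particular, one must quantify precisely how the bounded range of dependence (a consequence of $U_\w \in [-1,1]^d$) propagates along the DSF, and show that ``good slabs'' decoupling past from future occur at positive density. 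Once this localization is rigorously set up, the remainder of the argument adapts the FINR/Newman--Ravishankar type scheme in a fairly standard manner.
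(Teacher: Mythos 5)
Your skeleton (renewal/regeneration structure, Donsker for a single path, a coalescence estimate, then the FINR criteria) matches the paper's broad outline, but two of your key steps contain genuine gaps, and in both cases the paper had to do something structurally different. First, the coalescence-time estimate: you propose to treat the difference of the two paths at joint regenerations as a martingale with ``a uniform drift toward zero'' near the origin, closed by a Lyapunov or recurrent-random-walk comparison. The paper explicitly states that it could not show the difference chain $\{Z_j\}$ is a martingale (so the Coletti et al.\ route is unavailable), and what it actually proves and needs is the quantitative tail $\P(T(\u,\v)>t)\le C_0(\v(1)-\u(1))/\sqrt{t}$, obtained from Corollary 5.6 of \cite{CSST19}. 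The hypothesis of that corollary is \emph{zero} conditional mean of the increment on a high-probability event when the paths are \emph{far apart}, and this is established not by a martingale computation but by a resampling argument: on the event that the two renewal blocks are confined to disjoint rectangles, one swaps the perturbed configurations in the two rectangles, which flips the sign of the increment without changing its law. Your qualitative conclusion ``the meeting time at separation $xn$ is of order $n^2$'' is also too weak; the linear-in-separation, $1/\sqrt{t}$ tail is exactly what is consumed later in the path-counting estimates, so without this mechanism the argument does not close.

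Second, criterion $(B_2)$: you assert that ``the number of distinct paths crossing a wedge has an exponentially decaying tail,'' but give no mechanism, and this is precisely the point the paper identifies as the obstruction. The standard verification of $(B_2)$ uses an FKG-type correlation inequality, which is not available for the perturbed lattice (the dependence here is of negative-association type), and the paper does not verify $(B_2)$ at all. Instead it constructs a dual forest $\widehat{G}$, proves the stronger joint convergence of $(\overline{{\cal X}}_n,\overline{\widehat{\cal X}}_n)$ to $({\cal W},\widehat{{\cal W}})$ via Theorem 32 of \cite{CSST19}, and replaces $(B_2)$ by the condition that no limiting forward path spends positive Lebesgue time with a limiting dual path, which is again checked using the $1/\sqrt{t}$ coalescence tail (Lemma \ref{lem:coalescenceDSF}). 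So your plan is missing either an FKG substitute or the dual construction; as written, the $(B_2)$ step would not go through. (Your regeneration sketch is in the right spirit — it corresponds to the paper's ``special up steps'' with controlled history height — but note that the i.i.d.\ block structure only holds from the second renewal on, and the renewal state must also record the conditional law of the perturbations of the neighbouring lattice points, which is what the paper's $\sigma$-fields ${\cal G}_{\tau_j}$ are designed to do.)
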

In the next section we will describe the topology of convergence in detail.
Note that one can combine both $\sigma$ and $\gamma$ to obtain a single normalization constant. 
But we prefer to keep both as we believe that it is easier to interpret these constants that way.
 
Ferrari \textit{et. al.} \cite{FFW05}  showed that, for $d=2$,  
the random graph on the Poisson points introduced by \cite{FLT04},
converges to the Brownian web under a suitable diffusive scaling. 
Coletti \textit{et. al.} \cite{CFD09}
proved a similar result for the discrete random graph studied in \cite{GRS04}.
For the discrete DSF considered in \cite{RSS16}, Roy \textit{et. al.}
proved that suitably normalized diffusively scaled paths converges to the 
Brownian web. For the discrete DSF considered in \cite{RSS16}, the paths are
 non-crossing. Another related discrete DSF model where paths can cross each other,
was studied in \cite{VZ17} and showed to converge to the Brownian web. 
In \cite{BB07} it has been shown that scaled paths of the
successive ancestors in the DSF converges weakly to the Brownian motion
and also conjectured that the scaling limit of the DSF on the Poisson points 
is the  Brownian web.  This conjecture remained open for a long time
 and very recently it has been proved in \cite{CSST19}.
The scaling limit of the collection of all paths is a much harder question, as one
 has to deal with the dependencies between different paths.
In this paper we show that the perturbed DSF, which is created on a dependent 
random environment due to the perturbed lattice points, 
belongs to the basin of attraction of the Brownian web. We 
actually prove a stronger result in the sense that we define a dual for the perturbed DSF and show that, under diffusive scaling the perturbed DSF and it's dual jointly converge in  distribution to the Brownian web and its dual. This joint convergence further allows us to show that for $d=2$, there is no bi-infinite path in the perturbed DSF a.s.   

The paper is organized in the following way. In Section \ref{sec:BwebIntro}
we recall the topological 
details  for convergence to the Brownian web from \cite{FINR04}.
In Section \ref{sec:MarkovProp}, a discrete time joint exploration process is introduced to describe the joint evolution of the DSF paths and we define random renewal steps for the joint 
exploration process. In Section \ref{sec:ProofTauTail} we prove that the number of steps between any two 
consecutive renewal steps decay exponentially.  
In Section \ref{sec:RW_Martingale} we show that at these renewal steps,
 the exploration process can be restarted
in some sense and for the single DSF path process these renewal steps allow 
us to break the trajectory into i.i.d. blocks of increments.  
For multiple paths the difference between the restarted paths observed at these renewal steps 
gives a Markov chain which behaves like a random walk 
away from the origin. Using this in Section \ref{sec:Trees}, 
we prove Theorem \ref{thm:TreeDSF}. An important ingredient is Proposition \ref{prop:DSF_CoalescingTimeTail}, which gives an estimate for coalescing time tail 
decay for two DSF paths for $d=2$. This estimate is crucially used in Section \ref{sec:cvBW} to prove
Theorem \ref{thm:BW}, i.e., the scaled DSF converges to the Brownian web. In Section \ref{sec:cvBW},
 we construct a dual for the perturbed DSF and prove that the DSF and its dual jointly converge to the Brownian web and its dual (Theorem \ref{thm:BW_Joint}).  

\section{The Brownian web}
\label{sec:BwebIntro}

 Fontes \textit{et. al.} \cite{FINR04} provided a suitable framework so that the Brownian 
web can be regarded as a random variable taking values in a Polish space.
In this section, we recall the relevant topological details from \cite{FINR04}.

Let $\R^{2}_c$ denote the completion of the space time plane $\R^2$ with
respect to the metric
\begin{equation*}
\rho((x_1,t_1),(x_2,t_2))=|\tanh(t_1)-\tanh(t_2)|\vee \Bigl| \frac{\tanh(x_1)}{1+|t_1|}
-\frac{\tanh(x_2)}{1+|t_2|} \Bigr|.
\end{equation*}
As a topological space $\R^{2}_c$ can be identified with the
continuous image of $[-\infty,\infty]^2$ under a map that identifies the line
$[-\infty,\infty]\times\{\infty\}$ with the point $(\ast,\infty)$, and the line
$[-\infty,\infty]\times\{-\infty\}$ with the point $(\ast,-\infty)$.
A path $\pi$ in $\R^{2}_c$ with starting time $\sigma_{\pi}\in [-\infty,\infty]$
is a mapping $\pi :[\sigma_{\pi},\infty]\rightarrow [-\infty,\infty]$ such that
$\pi(\infty)= \pi(-\infty)= \ast$ and $t\rightarrow (\pi(t),t)$ is a continuous
map from $[\sigma_{\pi},\infty]$ to $(\R^{2}_c,\rho)$.
We then define $\Pi$ to be the space of all paths in $\R^{2}_c$ with all possible starting times in $[-\infty,\infty]$.
The following metric, for $\pi_1,\pi_2\in \Pi$
\begin{equation*}
d_{\Pi} (\pi_1,\pi_2)= |\tanh(\sigma_{\pi_1})-\tanh(\sigma_{\pi_2})|\vee\sup_{t\geq
\sigma_{\pi_1}\wedge
\sigma_{\pi_2}} \Bigl|\frac{\tanh(\pi_1(t\vee\sigma_{\pi_1}))}{1+|t|}-\frac{
\tanh(\pi_2(t\vee\sigma_{\pi_2}))}{1+|t|}\Bigr|
\end{equation*}
makes $\Pi$ a complete, separable metric space. Convergence in this
metric can be described as locally uniform convergence of paths as
well as convergence of starting times.
Let ${\cal H}$ be the space of compact subsets of $(\Pi,d_{\Pi})$ equipped with
the Hausdorff metric $d_{{\cal H}}$ given by,
\begin{equation*}
d_{{\cal H}}(K_1,K_2)= \sup_{\pi_1 \in K_1} \inf_{\pi_2 \in
K_2}d_{ \Pi} (\pi_1,\pi_2)\vee
\sup_{\pi_2 \in K_2} \inf_{\pi_1 \in K_1} d_{\Pi} (\pi_1,\pi_2).
\end{equation*}
The space $({\cal H},d_{{\cal H}})$ is a complete separable metric space. Let
$B_{{\cal H}}$ be the Borel  $\sigma-$algebra on the metric space $({\cal H},d_{{\cal H}})$.
The Brownian web ${\cal W}$ is an $({\cal H},B_{{\cal H}})$ valued random
variable.

Consider the collection of DSF paths ${\cal X} = \{\pi^\x : \x \in V\} \subset \Pi$.
 Two paths $\pi_1 , \pi_2 \in \Pi$
are said to be non-crossing if there does not exist any $s_1, s_2 \in [\sigma_{\pi_1}\vee \sigma_{\pi_2}, 
\infty)$ such that 
\begin{align}
\label{eq:NonCrossing}
(\pi_1(s_1) - \pi_2(s_1))(\pi_1(s_2) - \pi_2(s_2)) < 0.
\end{align}
It follows that DSF paths are non-crossing and further for each $n \geq 1$,  
${\cal X}_n(\gamma, \sigma)$ a.s. forms a non-crossing subset of $\Pi$.  
Finally the closure of ${\cal X}_n(\gamma, \sigma)$ in $(\Pi, d_\Pi)$
denoted by $\overline{{\cal X}_n}(\gamma, \sigma)$ gives an $({\cal H}, {\cal B}_{\cal H})$-valued random variable a.s. We will show that as $n\to \infty$, as $({\cal H}, {\cal B}_{\cal H})$-valued random variables, $\overline{{\cal X}_n}(\gamma, \sigma)$ converges in distribution to the Brownian web ${\cal W}$.

\section{ Joint exploration process}
\label{sec:MarkovProp}

We consider the DSF paths starting from two points $\u, \v \in \Z^d$
with $\u(d) = \v(d)$. Note that the construction with two points automatically takes 
care of the construction of a single DSF path. As shown in Figure \ref{fig:DSFPaths}, given the past 
movements we have the information that interior of the shaded region can not have any perturbed lattice 
point in it and because of that, the process of a single DSF path
$\{ h^n(\u) : n \geq 0 \}$ is \textbf{not} Markov. We need to introduce some notations to
define a joint exploration process of two DSF paths starting from $\u$ and $\v$
so that both the paths move in tandem. 
Later we show that there are random times when this exploration process exhibits renewal properties. 
   Before we proceed further, it 
 is important to mention that several qualitative results of this paper involve constants.
  For the sake of clarity, we will use $C_0$ and $C_1$ to denote two positive constants, whose exact values may change from one line to the other. The important thing is that both $C_0$ and $C_1$ are universal constants whose values depend only on the dimension $d$. 
  
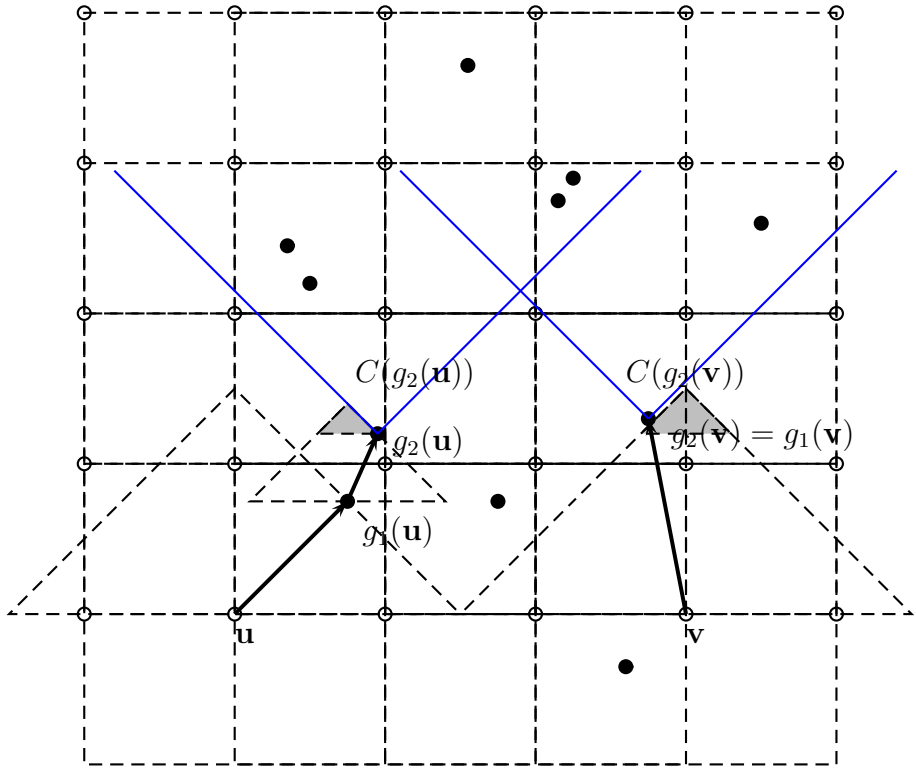
\begin{figure}[ht!]
\label{fig:DSFPaths}
\begin{center}

\begin{pspicture}(0,-2)(10,8.5)

\pspolygon[linestyle=dashed, fillstyle = solid , fillcolor = lightgray](3.9,2.4) (3.5,2.8)(3.1,2.4)
\pspolygon[linestyle=dashed, fillstyle = solid , fillcolor = lightgray](7.4,2.4) (8.6,2.4)(8,3)

\pscircle[fillcolor=black,fillstyle=none](0,0){.1}
\pscircle[fillcolor=black,fillstyle=none](2,0){.1}
\pscircle[fillcolor=black,fillstyle=none](4,0){.1}
\pscircle[fillcolor=black,fillstyle=none](6,0){.1}
\pscircle[fillcolor=black,fillstyle=none](8,0){.1}
\pscircle[fillcolor=black,fillstyle=none](10,0){.1}
\pscircle[fillcolor=black,fillstyle=none](0,2){.1}
\pscircle[fillcolor=black,fillstyle=none](2,2){.1}
\pscircle[fillcolor=black,fillstyle=none](4,2){.1}
\pscircle[fillcolor=black,fillstyle=none](6,2){.1}
\pscircle[fillcolor=black,fillstyle=none](8,2){.1}
\pscircle[fillcolor=black,fillstyle=none](10,2){.1}
\pscircle[fillcolor=black,fillstyle=none](0,4){.1}
\pscircle[fillcolor=black,fillstyle=none](2,4){.1}
\pscircle[fillcolor=black,fillstyle=none](4,4){.1}
\pscircle[fillcolor=black,fillstyle=none](6,4){.1}
\pscircle[fillcolor=black,fillstyle=none](8,4){.1}
\pscircle[fillcolor=black,fillstyle=none](10,4){.1}
\pscircle[fillcolor=black,fillstyle=none](0,6){.1}
\pscircle[fillcolor=black,fillstyle=none](2,6){.1}
\pscircle[fillcolor=black,fillstyle=none](4,6){.1}
\pscircle[fillcolor=black,fillstyle=none](6,6){.1}
\pscircle[fillcolor=black,fillstyle=none](8,6){.1}
\pscircle[fillcolor=black,fillstyle=none](10,6){.1}
\pscircle[fillcolor=black,fillstyle=none](0,8){.1}
\pscircle[fillcolor=black,fillstyle=none](2,8){.1}
\pscircle[fillcolor=black,fillstyle=none](4,8){.1}
\pscircle[fillcolor=black,fillstyle=none](6,8){.1}
\pscircle[fillcolor=black,fillstyle=none](8,8){.1}
\pscircle[fillcolor=black,fillstyle=none](10,8){.1}

\pspolygon[linestyle=dashed](4,2)(4,-2)(0,-2)(0,2)
\pspolygon[linestyle=dashed](6,2)(6,-2)(2,-2)(2,2)
\pspolygon[linestyle=dashed](8,2)(8,-2)(4,-2)(4,2)
\pspolygon[linestyle=dashed](10,2)(10,-2)(6,-2)(6,2)

\pspolygon[linestyle=dashed](4,4)(4,0)(0,0)(0,4)
\pspolygon[linestyle=dashed](6,4)(6,0)(2,0)(2,4)
\pspolygon[linestyle=dashed](8,4)(8,0)(4,0)(4,4)
\pspolygon[linestyle=dashed](10,4)(10,0)(6,0)(6,4)

\pspolygon[linestyle=dashed](4,6)(4,2)(0,2)(0,6)
\pspolygon[linestyle=dashed](6,6)(6,2)(2,2)(2,6)
\pspolygon[linestyle=dashed](8,6)(8,2)(4,2)(4,6)
\pspolygon[linestyle=dashed](10,6)(10,2)(6,2)(6,6)

\pspolygon[linestyle=dashed](4,8)(4,4)(0,4)(0,8)
\pspolygon[linestyle=dashed](6,8)(6,4)(2,4)(2,8)
\pspolygon[linestyle=dashed](8,8)(8,4)(4,4)(4,8)
\pspolygon[linestyle=dashed](10,8)(10,4)(6,4)(6,8)

\pscircle[fillcolor=black,fillstyle=solid](3.5,1.5){.1}
\pscircle[fillcolor=black,fillstyle=solid](7.5,2.6){.1}
\pscircle[fillcolor=black,fillstyle=solid](7.2,-.7){.1}

\pscircle[fillcolor=black,fillstyle=solid](3.9, 2.4){.1}

\pscircle[fillcolor=black,fillstyle=solid](5.1,7.3){.1}
\pscircle[fillcolor=black,fillstyle=solid](6.3,5.5){.1}
\pscircle[fillcolor=black,fillstyle=solid](3,4.4){.1}
\pscircle[fillcolor=black,fillstyle=solid](2.7,4.9){.1}
\pscircle[fillcolor=black,fillstyle=solid](9,5.2){.1}
\pscircle[fillcolor=black,fillstyle=solid](6.5,5.8){.1}
\pscircle[fillcolor=black,fillstyle=solid](5.5,1.5){.1}

\psline[linewidth=1.5pt]{->}(2,0)(3.5,1.5)
\psline[linewidth=1.5pt]{->}(3.5,1.5)(3.9, 2.4)
\psline[linewidth=1.5pt]{->}(8,0)(7.5,2.6)

\pspolygon[linestyle=dashed](5,0) (-1,0)(2,3)
\pspolygon[linestyle=dashed](5,0) (11,0)(8,3)
\pspolygon[linestyle=dashed](4.8,1.5) (2.2,1.5)(3.5,2.8)

\rput[tl](2,-.2){$\mathbf{u}$}
\rput[tl](8,-.2){$\mathbf{v}$}
\rput[tl](3.7,1.3){$g_1(\mathbf{u})$}
\rput[tl](7.8,2.6){$g_2(\mathbf{v}) = g_1(\mathbf{v})$}
\rput[tl](4.1, 2.5){$g_2(\mathbf{u})$}

\psline[linecolor=blue](3.9, 2.4)(0.4,5.9)
\psline[linecolor=blue](3.9, 2.4)(7.4,5.9)
\rput[tl](3.6, 3.4) {$C(g_2(\mathbf{u}))$}

\psline[linecolor=blue](7.5,2.6)(4.2,5.9)
\psline[linecolor=blue](7.5,2.6)(10.8,5.9)
\rput[tl](7.2, 3.4) {$C(g_2(\mathbf{v}))$}

\end{pspicture}
\caption{For $d=2$, first two steps of the joint exploration process starting from $\mathbf{u}, \mathbf{v} \in \mathbb{Z}^2$ are represented. The black dots represent perturbed points. Both the points move at the first step whereas only the point $g_1(\mathbf{u})$ moves at the second step. The shaded region represents explored region $H_2$ which can not have a perturbed point in it's interior. Both the $\pi/2$ cones $C(g_2(\mathbf{u}))$ and 
$C(g_2(\mathbf{v}))$ (represented within blue lines) are unexplored, i.e., $(C(g_2(\mathbf{u})) \cup C(g_2(\mathbf{v}))) \cap H_2 = \emptyset$.}
\end{center}
\end{figure}


Before describing our movement algorithm, we observe that while starting from the lattice points $\u$ and $\v$, we don't have any information about the perturbed points in the vertex set $V$.   
Later we will define renewal steps for this process (see Definition \ref{def:Tau1Step})
and while restarting the process from a lattice point at the renewal step,
 it turns out that for a finite subset 
 $A \subset \Z^d $ of lattice points,
we have the information that for each $\w \in A$, the associated 
perturbed point $\w + U_\w$ is uniformly 
distributed over certain region.
In Section \ref{sec:ProofTauTail} and in Section \ref{sec:RW_Martingale}, 
we will mention about this in more detail.

The joint exploration process starting from the vertices $\u$ and $\v$ is denoted 
by $\{(g_n(\u), g_n(\v)): n \geq 0\}$ and  inductively defined  as follows.  
Set $(g_0(\u), g_0(\v)) := (\u, \v)$ and $H_0 := \emptyset$. 
Take $(g_1(\u), g_1(\v)) := (h(g_0(\u)), h(g_0(\v)))$.
 While starting from $(g_1(\u), g_1(\v))$ we need not have 
 $\lfloor g_1(\u)(d) \rfloor = \lfloor g_1(\v)(d) \rfloor$. Our movement algorithm ensures that if we have 
$\lfloor g_1(\u)(d) \rfloor \neq \lfloor g_1(\v)(d) \rfloor$ then vertex with lower $d$ co-ordinate 
takes step and the other one stays put. Otherwise both the vertices take step. When both the vertices take step then it is possible that we may have $h(g_1(\u)) = g_1(\v)$ or $h(g_1(\v)) = g_1(\u)$. This means that the two trajectories have coalesced already. In order to reflect this 
 we make the following modification: 
\begin{align*}
(g_{2}(\u), g_{2}(\v)) :=
\begin{cases}
(h(g_{1}(\u)), g_{1}(\v))  & \text{ if } \lfloor g_{1}(\u)(d) \rfloor < \lfloor g_{1}(\v)(d) \rfloor \\
(g_{1}(\u), h(g_{1 }(\v)))  & \text{ if } \lfloor g_{1}(\u)(d) \rfloor > \lfloor g_{1}(\v)(d) \rfloor \\
(h^2(g_1(\u)), h(g_1(\v))) & \text{ if }  \lfloor g_{1}(\u)(d) \rfloor = \lfloor g_{1}(\v)(d) \rfloor \text{ and }h(g_1(\u)) = g_1(\v) \\
(h(g_1(\u)), h^2(g_1(\v))) & \text{ if }  \lfloor g_{1}(\u)(d) \rfloor = \lfloor g_{1}(\v)(d) \rfloor \text{ and }h(g_1(\v)) = g_1(\u) \\
(h(g_1(\u)), h(g_1(\v))) & \text{ otherwise}.
\end{cases}
\end{align*}
More generally for any $n \geq 1$, we define
\begin{notation}
\begin{align*}
(g_{n + 1}(\u), g_{n + 1}(\v)) :=
\begin{cases}
(h(g_{n}(\u)), g_{n }(\v))  & \text{ if } \lfloor g_{n}(\u)(d) \rfloor < \lfloor g_{n}(\v)(d) \rfloor \\
(g_{n}(\u), h(g_{n }(\v)))  & \text{ if } \lfloor g_{n}(\u)(d) \rfloor > \lfloor g_{n}(\v)(d) \rfloor \\
(h^2(g_n(\u)), h(g_n(\v))) & \text{ if }  \lfloor g_{n}(\u)(d) \rfloor = \lfloor g_{n}(\v)(d) \rfloor \text{ and }h(g_n(\u)) = g_n(\v) \\
(h(g_n(\u)), h^2(g_n(\v))) & \text{ if }  \lfloor g_{n}(\u)(d) \rfloor = \lfloor g_{n}(\v)(d) \rfloor \text{ and }h(g_n(\v)) = g_n(\u) \\
(h(g_n(\u)), h(g_n(\v))) & \text{ otherwise}.
\end{cases}
\end{align*}
\end{notation}
In words, if both the points $g_n(\u)$ and $g_n(\v)$ are at the same lattice level, i.e., if $  \lfloor g_{n}(\u)(d) \rfloor = \lfloor g_{n}(\v)(d) \rfloor$ then both the vertices move. Otherwise only the lower level vertex moves and the remaining one stays put.
Further, when both the vertices take step, if one of them takes step to the other one, it takes one more step so that 
we have $g_{n+k}(\u) = g_{n+k}(\v)$ for all $k \geq 1$ reflecting the fact that both the paths have coalesced. In what follows, the $n+1$-th step of the joint exploration process is referred to as
$\langle (g_n(\u), g_n(\v)), (g_{n+1}(\u), g_{n+1}(\v))\rangle$. Similarly for a single 
DSF path process $\{g_n(\u) : n \geq 0\}$, the $n + 1$-th step refers to $\langle g_n(\u), g_{n+1}(\u)\rangle = \langle h^n(\u), h^{n+1}(\u)\rangle$.

We earlier commented that because of the information generated due to the 
 past movements, the process $\{g_n(\u) : n \geq 0 \}$ is not Markov. Because of the same argument, 
 the process $\{(g_n(\u), g_n(\v)) : n \geq 0\}$ is \textbf{not} Markov as well.
 For $l \in \R$, the upper half-plane (closed) is given by  
$\mathbb{H}^+(l) := \{\x \in \R^d : \x(d) \geq l \}$. 
  We set $r_n := g_n(\u)(d) \wedge g_n(\v)(d)$
 and we try to describe the explored region in the upper half-plane $\mathbb{H}^+(r_n)$ 
 about which we have the information that there is no perturbed lattice points in it's interior.
 For $\x \in \R^d$ and for $r \geq 0$ let 
 \begin{align}
 \label{def:UpperBall_LowerBall}
 B^+(\x, r) := \{\y \in \R^d : ||\y - \x||_1 \leq r, \y(d) \geq \x(d)\} \text{ and }
B^-(\x, r) := \{\y \in \R^d : ||\y - \x||_1 \leq r, \y(d) \leq \x(d)\}.
 \end{align}
denote the upper and lower half of the $||\quad||_1$ ball $B(\x, r)$  of radius $r$ centred at $\x$
respectively. For $r = 0$, both the sets $B^+(\x, 0)$ and $B^+(\x, 0)$ are taken to be empty. 

For $n \geq 1$, let $H_n$ denote the \textit{explored region} or the \textit{history region}
 in the upper half-plane $\mathbb{H}^+(r_n)$ due to $n$ steps of the process.
  Observe that during the $n$-th step for any moving vertex, i.e., for any $\w \in \{\u, \v\}$ with $g_{n-1}(\w) \neq g_{n}(\w)$
we must have that the interior of the region $B^+(g_{n-1}(\w),||g_{n-1}(\w) - h(g_{n-1}(\w))||_1)$ can not contain points from $V$. Among this, the part of the explored region in the upper half-plane 
$\mathbb{H}^+(r_n)$ actually affects the distribution of the $(n+1)$-th step.
For $n \geq 1$, the explored region
or history region in the upper half-plane $\mathbb{H}^+(r_n)$,
which can not contain points from $V$ in it's interior, is denoted as $H_n$ and defined as: 
\begin{notation}
\begin{align}
\label{eq:HistorySet_NoPoint_n}
H_n := 
\begin{cases}
\bigl ( B^+(g_{n-1}(\v), || h(g_{n-1}(\v)) - g_{n-1}(\v)||_1) \cup H_{n-1} \bigr ) \cap \mathbb{H}^+(r_n) & \text{ if }\lfloor g_{n-1}(\u)(d)\rfloor < \lfloor g_{n-1}(\v)(d)\rfloor \\
\bigl ( B^+(g_{n-1}(\v), || h(g_{n-1}(\u)) - g_{n-1}(\u)||_1) \cup H_{n-1} \bigr ) \cap \mathbb{H}^+(r_n) & \text{ if }\lfloor g_{n-1}(\v)(d)\rfloor < \lfloor g_{n-1}(\u)(d)\rfloor \\ 
\bigl ( B^+(g_{n-1}(\u), || h(g_{n-1}(\u)) - g_{n-1}(\u)||_1) \cup \\
 \quad  B^+(g_{n-1}(\v), || h(g_{n-1}(\v)) - g_{n-1}(\v)||_1) \cup H_{n-1} \bigr ) \cap \mathbb{H}^+(r_n) &\text{ otherwise}.
\end{cases}   
\end{align}
\end{notation}
In Figure \ref{fig:DSFPaths} for $d=2$, we have an illustration of the history region $H_2$ for the joint 
exploration process starting from $\u$ and $\v$.   
We observe that the formation of history set $H_n$ 
depends on the previous steps. Given ${\cal F}_n$, the explored region $H_n$
is of the form $\bigl ( \cup_{i=1}^k B^+(\x_i, l_i) \bigr ) \cap \mathbb{H}^+(r_n)$
 for some $k\geq 1$ with $\x_i(d) < r_n$
 and $l_i \geq 0$ for all $1 \leq i \leq k$. 

For $\x \in V$ let $\widehat{\x} \in \Z^d$ denote the lattice point such 
that $\x = \widehat{\x} + U_\x$.
Note that for each $\x \in V$, the point $\widehat{\x} \in \Z^d$ is a.s. uniquely defined. For the given choice of 
$\u$ and $\v$, we set $\Gamma_0 = \emptyset$ and for $n \geq 1$, we define 
\begin{notation}
\begin{align}
\label{eq:Gamma_Lattice_n}
\Gamma_n := \{\widehat{g_j}(\u), \widehat{g_j}(\v) : 0 \leq j \leq n \}  \subset \Z^d.
\end{align} 
\end{notation}
In other words the set $\Gamma_n$ denotes the set of lattice points 
whose perturbations were used already by $n$ steps $\{
(g_j(\u),  g_j(\v)): 0 \leq j \leq n\}$. Given $\{
(g_j(\u),  g_j(\v)): 0 \leq j \leq n\}$, a lattice point 
$\w \in \Z^d \setminus \Gamma_n$ as well as the associated perturbed point $\w + U_\w$
is said to be \textit{unexplored}.   
\begin{notation}
\label{eq:FFiltration_n}
Let
\begin{align*}
{\mathcal F}_n = {\mathcal F}_n(\u, \v) := \sigma \bigl( \{(g_j(\u), g_j(\v)) ,
:  0 \leq j \leq n\} , \Gamma_n \bigr)
\end{align*}
denote the natural filtration. 
\end{notation}
Clearly ${\mathcal F}_n$, has all information about $n$ steps of the joint process 
and the lattice points, whose perturbations were used in these steps, as well.
It is important to observe that for $0 \leq j \leq n$, including the history set $H_j$ in (\ref{eq:FFiltration_n}) is redundant, because
 the steps of the DSF paths $(g_j(\u), g_j(\v))$ for $0 \leq j \leq n$ 
 give us complete information about the 
 explored regions $H_j$  for $0 \leq j \leq n$ as well.

As observed earlier that the process $\{(g_n(\u), g_n(\v)) : n \geq 0\}$ is \textit{ not } Markov. 
Given ${\cal F}_n$ we have the information that the history region $H_n$ can not have a perturbed  point in its interior and  the explored region $H_n$ clearly depends on the past movements.
Further we also have the information that the perturbed points associated to the lattice points in $\Gamma_n$ have been used up. If we carry all these informations, then the 
 following proposition gives a Markovian structure. 

 \begin{proposition}
\label{prop:DSFMarkov}
The process $\bigl\{ \bigl( g_n(\u),g_n(\v), H_n, \Gamma_n \bigr) : n \geq 0 \bigr \}$ forms a Markov chain.  
\end{proposition}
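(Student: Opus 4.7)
The plan is to show that at each time $n$, the conditional law of the unexplored perturbations $\{U_\w : \w \in \Z^d \setminus \Gamma_n\}$ given the full history $\mathcal{F}_n$ depends on $\mathcal{F}_n$ only through the pair $(H_n, \Gamma_n)$, and then to note that the one-step update is a deterministic function of the current state $(g_n(\u), g_n(\v), H_n, \Gamma_n)$ together with these unexplored perturbations. Composing these two facts immediately yields the Markov property.

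First, I would argue by induction on $n$ the following sufficient-statistic claim: conditionally on $\mathcal{F}_n$, the perturbations $\{U_\w : \w \in \Z^d \setminus \Gamma_n\}$ are mutually independent, and each such $U_\w$ is uniformly distributed on $\{u \in [-1,+1]^d : \w + u \notin H_n\}$. The base case $n=0$ is immediate, since $\Gamma_0 = \emptyset$, $H_0 = \emptyset$, and the raw $\{U_\w\}_{\w \in \Z^d}$ are i.i.d. uniform on $[-1,+1]^d$. For the inductive step, note that the only information generated between step $n$ and step $n+1$ is of two kinds: (i) exact location of the new perturbed point $h(g_n(\cdot))$, which pins down $U_\w$ for the corresponding $\w$ and enlarges $\Gamma_n$ to $\Gamma_{n+1}$; and (ii) the absence of any other perturbed point inside the newly added $B^+$-balls around $g_n(\u)$ or $g_n(\v)$, which enlarges $H_n$ to $H_{n+1}$ according to the definition (\ref{eq:HistorySet_NoPoint_n}). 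Because each such exclusion event and each location revelation involves only one $U_\w$ at a time, independence across the remaining $\w \in \Z^d \setminus \Gamma_{n+1}$ is preserved, and the conditional marginal of each surviving $U_\w$ is uniform subject precisely to the updated constraint $\w + u \notin H_{n+1}$.

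Second, the movement rule defining $(g_{n+1}(\u), g_{n+1}(\v))$ reads off only the current positions $g_n(\u), g_n(\v)$ and the $||\cdot||_1$-nearest unexplored perturbed point(s) in the appropriate upper half-plane; the updates $H_n \mapsto H_{n+1}$ and $\Gamma_n \mapsto \Gamma_{n+1}$ are likewise explicit functions of $(g_n(\u), g_n(\v), H_n, \Gamma_n)$ and of the unexplored perturbations. Composing this deterministic map with the conditional law from the previous paragraph, the conditional distribution of $(g_{n+1}(\u), g_{n+1}(\v), H_{n+1}, \Gamma_{n+1})$ given $\mathcal{F}_n$ factors through $(g_n(\u), g_n(\v), H_n, \Gamma_n)$, which is precisely the Markov property.

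The main subtlety is the inductive claim about the conditional law of unexplored perturbations. One must verify carefully that the past exploration leaks no information about $U_\w$ (for $\w \in \Z^d \setminus \Gamma_n$) beyond the exclusion $\w + U_\w \notin H_n$, and that mutual independence across distinct lattice sites survives the conditioning. The key observation making this tractable is that each exclusion constraint involves only a single $U_\w$, so that the product form of the conditional joint law is maintained throughout the recursion and the non-Markovian nature of $\{(g_n(\u), g_n(\v)) : n \geq 0\}$ by itself is exactly compensated by augmenting the state with $(H_n, \Gamma_n)$.
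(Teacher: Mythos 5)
Your proposal is correct and follows essentially the same route as the paper: the paper's proof also rests on the observation that, given $\mathcal{F}_n$, the unexplored perturbations are conditionally independent and uniform on their boxes minus the explored region, except that it realizes this conditional law via an auxiliary i.i.d.\ family and a rejection-sampling variable $\eta$ so as to invoke a random mapping representation, whereas you state the sufficient-statistic claim directly and prove it by induction. Both arguments hinge on the same point, implicit in the paper's definition of $H_n$, that only the portion of the explored region lying in $\mathbb{H}^+(r_n)$ is relevant for the distribution of the next step, so the two proofs coincide in substance.
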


\noindent \textbf{Proof: } We first consider a sequence 
of i.i.d. collections of $\{ U^{n}_\w : \w \in \Z^d \}_{n \geq 1}$ of i.i.d. random variables uniformly distributed over $[-1, +1]^d$ and independent of the i.i.d. 
family $\{ U_\w : \w \in \Z^d \}$ that we have started with. Given ${\cal F}_n$, 
we consider $\bigl( g_n(\u),g_n(\v), H_n , \Gamma_n \bigr) = (\x_1, \x_2,  h_n, A_n)$ where $A_n$
is a finite subset of $\Z^d$. 
We define a non-negative integer valued random variable $\eta$ as follows:
\begin{align*}
\eta := \inf\{ n \geq 1 : \w + U^n_\w \notin h_n \text{ for all }\w \in \Z^d \setminus A_n \}.
\end{align*}
Since $h_n$ is a bounded region and $A_n$ also finite, $\eta$ is a.s. finite.
Set $r = \x_1(d) \wedge \x_2(d)$ 
and based on $\{U^\eta_\w : \w \in \Z^d\}$ we can define 
perturbed lattice points in $\mathbb{H}^+(\lfloor r \rfloor)$ as 
$$
V^\eta := \{ \w + U^{\eta}_\w : \w \in \Z^d \setminus A_n, \w(d) \geq \lfloor r \rfloor \}.
$$
Note that $V^\eta$ does not use the lattice points in $A_n$.
Given $\bigl( g_n(\u), g_n(\v), H_n , \Gamma_n \bigr) = (\x_1, \x_2, h_n, A_n)$,
the process will evolve according to the point process $V^\eta$.
In other words the conditional distribution of $\bigl(  g_{n+1}(\u),g_{n+1}(\v), H_{n+1} , \Gamma_{n+1} \bigr )$ given $\bigl( g_{n}(\u),g_{n}(\v), H_{n} , \Gamma_{n} \bigr) = (\x_1, \x_2, h_n, A_n)$, can be expressed as
\begin{align*}
\bigl(&  g_{n+1}(\u),g_{n+1}(\v), H_{n+1} , \Gamma_{n+1} \bigr )
\mid \bigl \{ \bigl( g_{n}(\u),g_{n}(\v), H_{n} , \Gamma_{n} \bigr) = (\x_1, \x_2, h_n, A_n), \cdots
\bigr \} \\
& \qquad  \qquad \qquad  = f \bigl ( (\x_1, \x_2, h_n, A_n), \{U^n_\w  : \w \in \Z^d \}_{n \geq 1}
\bigr )
\end{align*}
for some measurable $f$. Hence the Markov property follows by random mapping representation (see \cite{LPW09}). 
\qed

\begin{remark}
\label{rem:ConstructionPerturbedPoints}
We presented a construction starting from $\u, \v \in \Z^d$ with $\u(d) = \v(d)$. 
One can also do a similar construction starting from $\x, \y \in V$. In that case, 
our starting $\Gamma_0$ should be taken as $\{\widehat{\x}, \widehat{\y}\}$ 
and distribution of the joint exploration process will depend upon $\Gamma_0$.  
\end{remark}

Next we will show that there are random times such that the 
joint exploration process observed  at these random times exhibits renewal properties
in some sense. 
 In order to describe such a sequence of random times 
few more notations are required. 
For $\x \in V$, we define $\x^\uparrow, \x^\downarrow \in \Z^d$ as  
\begin{notation}
\label{def:UpArrowVertex}
\begin{align*}
\x^\uparrow & := \text{argmin} \{||\x - \w ||_1 : \w \in \Z^d, \w(d) = \lfloor \x(d)\rfloor + 1\},\\
\x^\downarrow & := \text{argmin} \{||\x - \w ||_1 : \w \in \Z^d, \w(d) = \lfloor \x(d)\rfloor\}.
\end{align*}
\end{notation}
In other words, $\x^\uparrow$ denotes the closest lattice point with $\x^\uparrow(d) = \lfloor \x(d)\rfloor + 1$. Similarly $\x^\downarrow$ denotes the closest lattice point with $\x^\downarrow(d)
 = \lfloor \x(d)\rfloor$. Fix $\delta$ as a small positive constant and consider the regions:  
$$
\Delta_n^\u := B^+(g^\uparrow_n(\u), \delta) \text{ and }
\Delta_n^\v := B^+(g^\uparrow_n(\v), \delta).
$$
Now we define certain kind of favourable steps referred as `up' steps
(see Figure \ref{fig:Up_Step}). 
We will also explain why these steps are favourable.
\begin{definition}
\label{def:nUpStep}
For any  $n \in \N$ given ${\cal F}_n$, we say that the $n+1$-th step 
is an \textbf{`up'} step if the following conditions are 
satisfied:
\begin{itemize}
\item[(i)] $\lfloor g_n(\u)(d) \rfloor = \lfloor g_n(\v)(d) \rfloor$ and 
\item[(ii)] $ g_{n+1}(\u) \in \Delta_n^\u$ and $g_{n+1}(\v) \in \Delta_n^\v$.
\end{itemize} 
\end{definition}
It is important to observe that the `up' steps are good for us. Because of upward movements,
 they do {\it not} generate large history regions. 
 More precisely, the maximum height of the \textit{new} history regions produced during an up step is 
  $(1/2 + \delta)(d-1)$. We give a brief justification for the marginal process $\{g_n(\u) : n \geq 0\}$ for $d=2$. 
Note that $|g_n(\u)(1) - g^\uparrow_n(\u)(1)| \leq 1/2$. If $(n+1)$-th step is an up step, then we must have $g_{n+1}(\u)
\in B^+(g^\uparrow_n, \delta)$ and hence the upper bound for the height of the new history triangle follows.
 On the other hand if we have consecutive two up steps, then the height of the \textit{newly} created history region at the end of consecutive two ups steps is of height at most $2\delta (d-1)$. This follows since the $||\quad||_1$ distance w.r.t. first $(d-1)$ coordinates between the last position and the penultimate position is bounded by $2\delta(d-1)$.
Further, given $g_n(\u), g_n(\v)$ with $\lfloor g_n(\u)(d)\rfloor = \lfloor g_n(\v)(d)\rfloor$, after two consecutive up steps we must have $g_{n+2}(\u)(d) \wedge g_{n+2}(\u)(d) 
  \geq g_{n}(\u)(d) \wedge g_{n}(\u)(d) + 1$. 
  Hence, if $L(H_n) = l > 1$
   then after two consecutive up steps the height of the history region 
   becomes smaller than $2\delta(d-1)\vee (l-1)$.  This observation motivates us to use up steps in defining our renewal steps.      
\begin{figure}[ht!]
\label{fig:Up_Step}
\begin{center}

\begin{pspicture}(0,-2)(10,8.5)

\pspolygon[linestyle=dashed, fillstyle = solid , fillcolor = lightgray](3.9,2.4) (3.5,2.8)(3.1,2.4)
\pspolygon[linestyle=dashed, fillstyle = solid , fillcolor = lightgray](7.4,2.4) (8.6,2.4)(8,3)
\pspolygon[fillcolor = gray, fillstyle = solid](8,4.5)(8.5,4)(7.5,4)
\pspolygon[fillcolor = gray, fillstyle = solid](4,4.5)(4.5,4)(3.5,4)

\pscircle[fillcolor=black,fillstyle=none](0,0){.1}
\pscircle[fillcolor=black,fillstyle=none](2,0){.1}
\pscircle[fillcolor=black,fillstyle=none](4,0){.1}
\pscircle[fillcolor=black,fillstyle=none](6,0){.1}
\pscircle[fillcolor=black,fillstyle=none](8,0){.1}
\pscircle[fillcolor=black,fillstyle=none](10,0){.1}
\pscircle[fillcolor=black,fillstyle=none](0,2){.1}
\pscircle[fillcolor=black,fillstyle=none](2,2){.1}
\pscircle[fillcolor=black,fillstyle=none](4,2){.1}
\pscircle[fillcolor=black,fillstyle=none](6,2){.1}
\pscircle[fillcolor=black,fillstyle=none](8,2){.1}
\pscircle[fillcolor=black,fillstyle=none](10,2){.1}
\pscircle[fillcolor=black,fillstyle=none](0,4){.1}
\pscircle[fillcolor=black,fillstyle=none](2,4){.1}
\pscircle[fillcolor=red,fillstyle=solid](4,4){.1}
\pscircle[fillcolor=black,fillstyle=none](6,4){.1}
\pscircle[fillcolor=red,fillstyle=solid](8,4){.1}
\pscircle[fillcolor=black,fillstyle=none](10,4){.1}
\pscircle[fillcolor=black,fillstyle=none](0,6){.1}
\pscircle[fillcolor=black,fillstyle=none](2,6){.1}
\pscircle[fillcolor=black,fillstyle=none](4,6){.1}
\pscircle[fillcolor=black,fillstyle=none](6,6){.1}
\pscircle[fillcolor=black,fillstyle=none](8,6){.1}
\pscircle[fillcolor=black,fillstyle=none](10,6){.1}
\pscircle[fillcolor=black,fillstyle=none](0,8){.1}
\pscircle[fillcolor=black,fillstyle=none](2,8){.1}
\pscircle[fillcolor=black,fillstyle=none](4,8){.1}
\pscircle[fillcolor=black,fillstyle=none](6,8){.1}
\pscircle[fillcolor=black,fillstyle=none](8,8){.1}
\pscircle[fillcolor=black,fillstyle=none](10,8){.1}

\pspolygon[linestyle=dashed](4,2)(4,-2)(0,-2)(0,2)
\pspolygon[linestyle=dashed](6,2)(6,-2)(2,-2)(2,2)
\pspolygon[linestyle=dashed](8,2)(8,-2)(4,-2)(4,2)
\pspolygon[linestyle=dashed](10,2)(10,-2)(6,-2)(6,2)

\pspolygon[linestyle=dashed](4,4)(4,0)(0,0)(0,4)
\pspolygon[linestyle=dashed](6,4)(6,0)(2,0)(2,4)
\pspolygon[linestyle=dashed](8,4)(8,0)(4,0)(4,4)
\pspolygon[linestyle=dashed](10,4)(10,0)(6,0)(6,4)

\pspolygon[linestyle=dashed](4,6)(4,2)(0,2)(0,6)
\pspolygon[linestyle=dashed](6,6)(6,2)(2,2)(2,6)
\pspolygon[linestyle=dashed](8,6)(8,2)(4,2)(4,6)
\pspolygon[linestyle=dashed](10,6)(10,2)(6,2)(6,6)

\pspolygon[linestyle=dashed](4,8)(4,4)(0,4)(0,8)
\pspolygon[linestyle=dashed](6,8)(6,4)(2,4)(2,8)
\pspolygon[linestyle=dashed](8,8)(8,4)(4,4)(4,8)
\pspolygon[linestyle=dashed](10,8)(10,4)(6,4)(6,8)

\pscircle[fillcolor=black,fillstyle=solid](3.5,1.5){.1}
\pscircle[fillcolor=black,fillstyle=solid](7.5,2.6){.1}

\pscircle[fillcolor=black,fillstyle=solid](3.9, 2.4){.1}

\pscircle[fillcolor=black,fillstyle=solid](5.5,1.5){.1}

\psline[linewidth=1.5pt]{->}(2,0)(3.5,1.5)
\psline[linewidth=1.5pt]{->}(3.5,1.5)(3.9, 2.4)
\psline[linewidth=1.5pt]{->}(3.9, 2.4)(4.3,4.2)
\psline[linewidth=1.5pt]{->}(8,0)(7.5,2.6)
\psline[linewidth=1.5pt]{->}(7.5,2.6)(8.2,4.3)

\pspolygon[linestyle=dashed](5,0) (-1,0)(2,3)
\pspolygon[linestyle=dashed](5,0) (11,0)(8,3)
\pspolygon[linestyle=dashed](4.8,1.5) (2.2,1.5)(3.5,2.8)

\pscircle[fillcolor=black,fillstyle=solid](4.3,4.2){.1}
\pscircle[fillcolor=black,fillstyle=solid](8.2,4.3){.1}

\pscircle[fillcolor=black,fillstyle=solid](3.5,1.5){.1}
\pscircle[fillcolor=black,fillstyle=solid](7.5,2.6){.1}
\pscircle[fillcolor=black,fillstyle=solid](1.8,2.2){.1}
\pscircle[fillcolor=black,fillstyle=solid](1.8,4.18){.1}
\pscircle[fillcolor=black,fillstyle=solid](2.2,5.7){.1}
\pscircle[fillcolor=black,fillstyle=solid](3.85,6.18){.1}
\pscircle[fillcolor=black,fillstyle=solid](4.5,6.13){.1}
\pscircle[fillcolor=black,fillstyle=solid](7.8,5.08){.1}
\pscircle[fillcolor=black,fillstyle=solid](9.15,5.22){.1}
\pscircle[fillcolor=black,fillstyle=solid](8.9,5.1){.1}
\pscircle[fillcolor=black,fillstyle=solid](9.12,7.13){.1}
\pscircle[fillcolor=black,fillstyle=solid](7.7,-.8){.1}

\rput[tl](2,-.2){$\mathbf{u}$}
\rput[tl](8,-.2){$\mathbf{v}$}

\rput[tl](3.6, 3.8) {$g^\uparrow_2(\mathbf{u})$}

\rput[tl](7.4, 3.8) {$g^\uparrow_2(\mathbf{v})$}

\end{pspicture}
\end{center}
\caption{For $d=2$, an `up step' is represented for the joint exploration process. The lattice points $g^\uparrow_2(\mathbf{u})$ and $g^\uparrow_2(\mathbf{v})$ are represented as red circles and the 
gray shaded regions $\Delta^\u_n$ and $\Delta^\v_n$ must contain points from $V$.}

\end{figure}
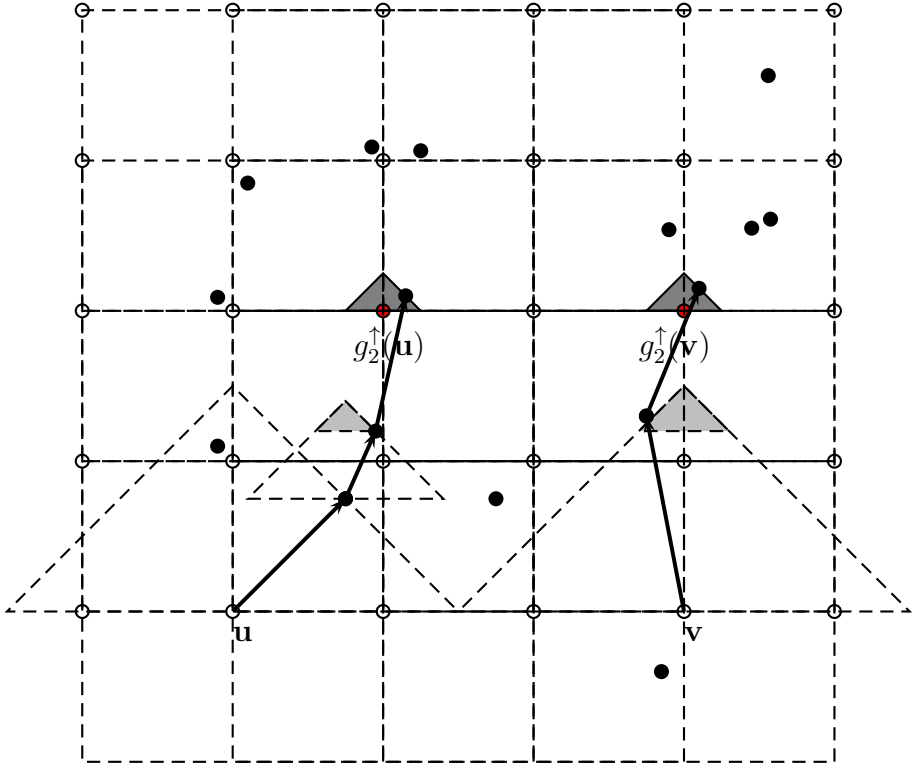

Next we put further restrictions  on `up' steps and define `special up' steps.
\begin{definition}
\label{def:SpecialUpStep}
For $\w \in \Z^d$, we define 
\begin{align}
\label{def:InftyNbd}
N(\w) := \{ \y \in \Z^d : \y(d) \geq \w(d), ||\y - \w||_\infty \leq 1\},
\end{align}
as the $|| \quad ||_\infty$ neighbourhood of $\w$ in the upper half-plane $\mathbb{H}^+(\w (d))$.

Next we define the event $A(\w)$ as 
\begin{align*}
A(\w) := \{ \y + U_\y \in B^+(\y, \delta) \text{ for all } \y \in N(\w)\} ,
\end{align*}
i.e., for each $\y \in N(\w)$, the associated perturbed point belongs to $B^+(\y, \delta)$.
Given ${\cal F}_n$, the $n+1$-th step 
is called a \textit{`special up'} step if is an up step and the event $A(g_n(\u)) \cap A(g_n(\v))$
also occurs (see Figure \ref{fig:A_n}).
\end{definition}
It is important to observe that the set $N(\w)$ is defined w.r.t. $||\quad||_\infty$ norm
and on the event $A(\w)$, the region $ B^+(\w, \delta)$ contains a single point from $V$, viz., 
$\w + U_\w$. Hence if the $n+1$-th step is a `special up step' then by definition we have
$$ g_{n+1}(\u) = \bigl ( g_n(\u)^\uparrow + U_{g_n(\u)^\uparrow} \bigr ) \text{ as well as }
g_{n+1}(\v) = \bigl ( g_n(\v)^\uparrow + U_{g_n(\v)^\uparrow} \bigr ).
$$  
Instead of the joint process if we consider the marginal process $\{ g_n(\u) =h^n(\u) : n \geq 0\}$,
then up steps and special up steps are defined similarly.

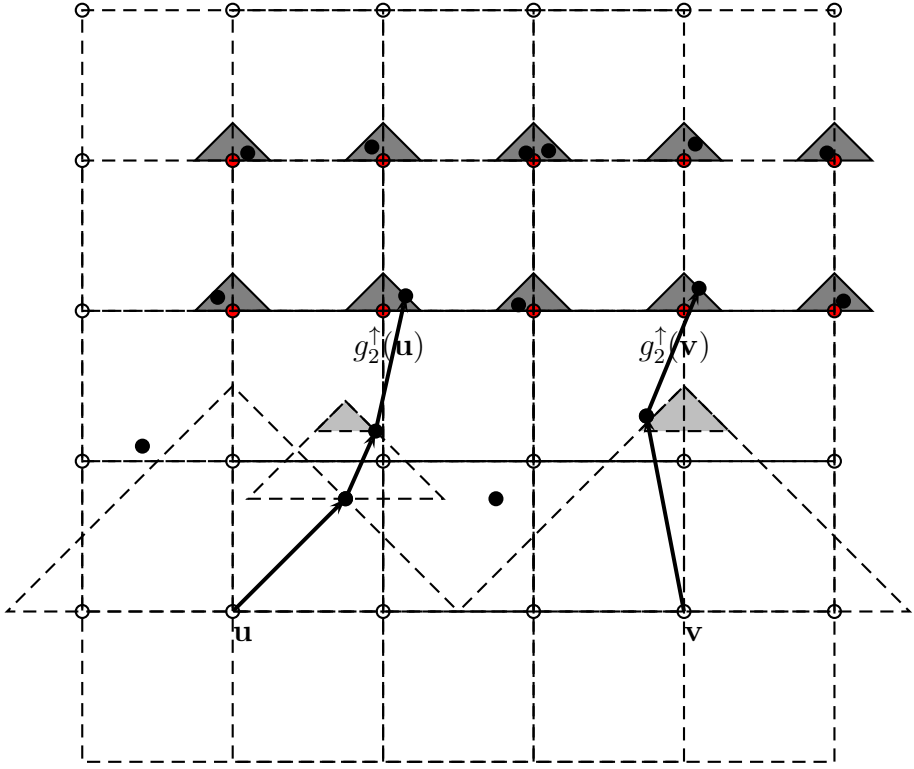
\begin{figure}[ht!]
\label{fig:A_n}
\begin{center}

\begin{pspicture}(0,-2)(10,8.5)

\pspolygon[linestyle=dashed, fillstyle = solid , fillcolor = lightgray](3.9,2.4) (3.5,2.8)(3.1,2.4)
\pspolygon[linestyle=dashed, fillstyle = solid , fillcolor = lightgray](7.4,2.4) (8.6,2.4)(8,3)

\pspolygon[fillcolor = gray, fillstyle = solid](4,4.5)(4.5,4)(3.5,4)
\pspolygon[fillcolor = gray, fillstyle = solid](2,4.5)(2.5,4)(1.5,4)
\pspolygon[fillcolor = gray, fillstyle = solid](6,4.5)(6.5,4)(5.5,4)
\pspolygon[fillcolor = gray, fillstyle = solid](4,6.5)(4.5,6)(3.5,6)
\pspolygon[fillcolor = gray, fillstyle = solid](2,6.5)(2.5,6)(1.5,6)
\pspolygon[fillcolor = gray, fillstyle = solid](6,6.5)(6.5,6)(5.5,6)
\pspolygon[fillcolor = gray, fillstyle = solid](8,4.5)(8.5,4)(7.5,4)
\pspolygon[fillcolor = gray, fillstyle = solid](10,4.5)(10.5,4)(9.5,4)
\pspolygon[fillcolor = gray, fillstyle = solid](8,6.5)(8.5,6)(7.5,6)
\pspolygon[fillcolor = gray, fillstyle = solid](10,6.5)(10.5,6)(9.5,6)

\pscircle[fillcolor=black,fillstyle=none](0,0){.1}
\pscircle[fillcolor=black,fillstyle=none](2,0){.1}
\pscircle[fillcolor=black,fillstyle=none](4,0){.1}
\pscircle[fillcolor=black,fillstyle=none](6,0){.1}
\pscircle[fillcolor=black,fillstyle=none](8,0){.1}
\pscircle[fillcolor=black,fillstyle=none](10,0){.1}
\pscircle[fillcolor=black,fillstyle=none](0,2){.1}
\pscircle[fillcolor=black,fillstyle=none](2,2){.1}
\pscircle[fillcolor=black,fillstyle=none](4,2){.1}
\pscircle[fillcolor=black,fillstyle=none](6,2){.1}
\pscircle[fillcolor=black,fillstyle=none](8,2){.1}
\pscircle[fillcolor=black,fillstyle=none](10,2){.1}
\pscircle[fillcolor=black,fillstyle=none](0,4){.1}
\pscircle[fillcolor=red,fillstyle=solid](2,4){.1}
\pscircle[fillcolor=red,fillstyle=solid](4,4){.1}
\pscircle[fillcolor=red,fillstyle=solid](6,4){.1}
\pscircle[fillcolor=red,fillstyle=solid](8,4){.1}
\pscircle[fillcolor=red,fillstyle=solid](10,4){.1}
\pscircle[fillcolor=black,fillstyle=none](0,6){.1}
\pscircle[fillcolor=red,fillstyle=solid](2,6){.1}
\pscircle[fillcolor=red,fillstyle=solid](4,6){.1}
\pscircle[fillcolor=red,fillstyle=solid](6,6){.1}
\pscircle[fillcolor=red,fillstyle=solid](8,6){.1}
\pscircle[fillcolor=red,fillstyle=solid](10,6){.1}
\pscircle[fillcolor=black,fillstyle=none](0,8){.1}
\pscircle[fillcolor=black,fillstyle=none](2,8){.1}
\pscircle[fillcolor=black,fillstyle=none](4,8){.1}
\pscircle[fillcolor=black,fillstyle=none](6,8){.1}
\pscircle[fillcolor=black,fillstyle=none](8,8){.1}
\pscircle[fillcolor=black,fillstyle=none](10,8){.1}

\pspolygon[linestyle=dashed](4,2)(4,-2)(0,-2)(0,2)
\pspolygon[linestyle=dashed](6,2)(6,-2)(2,-2)(2,2)
\pspolygon[linestyle=dashed](8,2)(8,-2)(4,-2)(4,2)
\pspolygon[linestyle=dashed](10,2)(10,-2)(6,-2)(6,2)

\pspolygon[linestyle=dashed](4,4)(4,0)(0,0)(0,4)
\pspolygon[linestyle=dashed](6,4)(6,0)(2,0)(2,4)
\pspolygon[linestyle=dashed](8,4)(8,0)(4,0)(4,4)
\pspolygon[linestyle=dashed](10,4)(10,0)(6,0)(6,4)

\pspolygon[linestyle=dashed](4,6)(4,2)(0,2)(0,6)
\pspolygon[linestyle=dashed](6,6)(6,2)(2,2)(2,6)
\pspolygon[linestyle=dashed](8,6)(8,2)(4,2)(4,6)
\pspolygon[linestyle=dashed](10,6)(10,2)(6,2)(6,6)

\pspolygon[linestyle=dashed](4,8)(4,4)(0,4)(0,8)
\pspolygon[linestyle=dashed](6,8)(6,4)(2,4)(2,8)
\pspolygon[linestyle=dashed](8,8)(8,4)(4,4)(4,8)
\pspolygon[linestyle=dashed](10,8)(10,4)(6,4)(6,8)

\pscircle[fillcolor=black,fillstyle=solid](3.5,1.5){.1}
\pscircle[fillcolor=black,fillstyle=solid](7.5,2.6){.1}

\pscircle[fillcolor=black,fillstyle=solid](3.9, 2.4){.1}

\pscircle[fillcolor=black,fillstyle=solid](5.5,1.5){.1}

\psline[linewidth=1.5pt]{->}(2,0)(3.5,1.5)
\psline[linewidth=1.5pt]{->}(3.5,1.5)(3.9, 2.4)
\psline[linewidth=1.5pt]{->}(3.9, 2.4)(4.3,4.2)
\psline[linewidth=1.5pt]{->}(8,0)(7.5,2.6)
\psline[linewidth=1.5pt]{->}(7.5,2.6)(8.2,4.3)

\pspolygon[linestyle=dashed](5,0) (-1,0)(2,3)
\pspolygon[linestyle=dashed](5,0) (11,0)(8,3)
\pspolygon[linestyle=dashed](4.8,1.5) (2.2,1.5)(3.5,2.8)

\pscircle[fillcolor=black,fillstyle=solid](4.3,4.2){.1}
\pscircle[fillcolor=black,fillstyle=solid](8.2,4.3){.1}

\pscircle[fillcolor=black,fillstyle=solid](3.5,1.5){.1}
\pscircle[fillcolor=black,fillstyle=solid](7.5,2.6){.1}
\pscircle[fillcolor=black,fillstyle=solid](0.8,2.2){.1}
\pscircle[fillcolor=black,fillstyle=solid](1.8,4.18){.1}
\pscircle[fillcolor=black,fillstyle=solid](2.2,6.1){.1}
\pscircle[fillcolor=black,fillstyle=solid](3.85,6.18){.1}
\pscircle[fillcolor=black,fillstyle=solid](6.2,6.13){.1}
\pscircle[fillcolor=black,fillstyle=solid](5.9,6.1){.1}
\pscircle[fillcolor=black,fillstyle=solid](5.8,4.08){.1}
\pscircle[fillcolor=black,fillstyle=solid](8.15,6.22){.1}
\pscircle[fillcolor=black,fillstyle=solid](9.9,6.1){.1}
\pscircle[fillcolor=black,fillstyle=solid](10.12,4.13){.1}

\rput[tl](2,-.2){$\mathbf{u}$}
\rput[tl](8,-.2){$\mathbf{v}$}

\rput[tl](3.6, 3.8) {$g^\uparrow_2(\mathbf{u})$}

\rput[tl](7.4, 3.8) {$g^\uparrow_2(\mathbf{v})$}

\end{pspicture}
\end{center}
\caption{For $d=2$, this figure represents special up step for the joint exploration process. The points 
in the set $N(g^\uparrow_2(\u)) \cup N(g^\uparrow_2(\v))$ are marked as red circles.
 Together with the up step,
each of the gray regions must have the associated perturbed point in it. It follows that both the regions 
$\Delta^\u_2$ and $\Delta^\v_2$ have exactly one point from $V$.}

\end{figure}

Now we are ready to define the following sequence of random steps, which we will call as renewal steps.
Let $m_d \in \N $ is as in Lemma \ref{lem:HistoryHeightBound}.
Set $\tau_0 = 0$ and for $j \geq 1$ define
\begin{notation}
\begin{align}
\label{def:Tau1Step}
\tau_j = \tau_j(\u , \v) := \inf\{& n > \tau_{j - 1} + m_d : \text{ the last } m_d \text{ steps are `up' steps and} \nonumber \\
& \qquad \text{ the last one is a `special up' step}  \},
\end{align}
\end{notation} 
In order to make sure that the definition makes sense we need to  show that for all $j \geq 1$, 
the r.v. $\tau_j$ is a.s. finite. We will prove a much stronger result. In Section \ref{sec:ProofTauTail}, we show that for all $j \geq 0$, the difference r.v. $(\tau_{j+1} - \tau_j)$  
decays exponentially. 

We observe that at a renewal step we must have
$\lfloor g_{\tau_j}(\u)(d) \rfloor  = \lfloor g_{\tau_{j}}(\v)(d)\rfloor$. 
Further the restriction $\tau_j > \tau_{j - 1} + m_d$ ensures 
that the steps between $\tau_{j-1}$  and $\tau_{j}$ are 
completely different from the steps between $\tau_{j-2}$  and $\tau_{j-1}$.
Before we proceed we comment that for the marginal process $\{g_n(\u) : n \geq 0\}$,
the sequence of renewal steps $\{\tau_j(\u) : j \geq 0\}$ is defined similarly.

In order to motivate the choice of $m_d$ in the definition of renewal step $\tau$, 
we prove one lemma which tells us that 
the `height' of the explored regions for our exploration process 
remains bounded throughout by $m_d  - 4$.
We first define our height function.

For any bounded subset $B$ of $\R^d$, we define the height of $B$ as 
$$
L(B) := \sup\{\x_1(d) - \x_2(d) : \x_1, \x_2 \in B \}.
$$ 
The explored region $H_n$ represents the dependency with previous steps and
it is good to have explored regions with lesser heights.
Our next lemma shows that the function $L(H_n)$ is bounded.

\begin{lemma}
\label{lem:HistoryHeightBound}
There exists $m_d \in \N$ depending on the dimension $d$ such that for all $n \geq 0$ we have  
\begin{equation}
\label{lem:HistoryHeightBound}
L(H_n) \leq m_d - 4 \text{ a.s.}
\end{equation}
\end{lemma}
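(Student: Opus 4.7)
The plan is to produce an explicit deterministic upper bound $L(H_n) \leq c(d)$ for some constant $c(d)$ depending only on $d$; then $m_d := \lceil c(d) \rceil + 4$ suffices. By the recursive definition of $H_n$, it is always a finite union of upper half-balls $B^+(\x_i, l_i)$ intersected with $\mathbb{H}^+(r_n)$, where each center $\x_i$ is a past position $g_j(\u)$ or $g_j(\v)$ (with $j < n$) from which the vertex moved, and $l_i = \|h(\x_i)-\x_i\|_1$. Since the $d$-coordinates along each trajectory are monotone non-decreasing (every $h$-step strictly raises the $d$-th coordinate), $\x_i(d) \leq g_n(\u)(d) \vee g_n(\v)(d)$; the top of each ball in the $d$-coordinate is exactly $\x_i(d)+l_i$; and the bottom of $H_n$ is at least $r_n$ by the intersection with $\mathbb{H}^+(r_n)$. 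Hence $L(H_n) \leq \max_i(\x_i(d)+l_i) - r_n$, and it suffices to bound (i) each $l_i$ and (ii) the gap $\max(g_n(\u)(d), g_n(\v)(d)) - r_n$ by deterministic constants.

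For (i), for any $\x \in \R^d$ let $\w^*(\x) \in \Z^d$ be the lattice point whose $d$-th coordinate is $\lfloor \x(d)\rfloor+2$ and whose other coordinates are the integers nearest to $\x(1),\dots,\x(d-1)$. Because $U_{\w^*(\x)}(d)\in[-1,+1]$, the perturbed point $\w^*(\x)+U_{\w^*(\x)}$ always has $d$-th coordinate in $[\lfloor \x(d)\rfloor+1, \lfloor \x(d)\rfloor+3]$, strictly above $\x(d)$, so it is almost surely a valid candidate for $h(\x)$. The estimates $\|\x-\w^*(\x)\|_1 \leq (d-1)/2+2$ and $\|U_{\w^*(\x)}\|_1 \leq d$ then yield the deterministic edge-length bound $\|h(\x)-\x\|_1 \leq C_d := (3d+3)/2$, which in particular bounds each $l_i$ appearing in $H_n$.

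For (ii), write $a_n = g_n(\u)(d)$, $b_n = g_n(\v)(d)$, $\alpha_n = \lfloor a_n\rfloor$, $\beta_n = \lfloor b_n\rfloor$, and prove $|\alpha_n-\beta_n| \leq C_d+1$ by induction on $n$, the base case $n=0$ being immediate from $\u(d)=\v(d)\in\Z$. For the inductive step, (i) implies that each single $h$-move raises the $d$-th coordinate of the moving vertex by at most $C_d$, so its integer part $\lfloor\cdot\rfloor$ increases by at most $C_d+1$; and by the movement algorithm the ``lower'' vertex is the only one that moves whenever the two trajectories differ in lattice level. Thus if $\alpha_{n-1}=\beta_{n-1}$, both levels rise by at most $C_d+1$ and the new gap is at most $C_d+1$; whereas if $\alpha_{n-1}<\beta_{n-1}$, one has $\beta_n=\beta_{n-1}$ and $\alpha_n \leq \alpha_{n-1}+C_d+1$, so either $\alpha_n \leq \beta_n$ (the gap cannot grow) or $\alpha_n-\beta_n \leq \alpha_{n-1}+C_d+1-\beta_{n-1} \leq C_d$ (using the integer gap $\beta_{n-1}\geq\alpha_{n-1}+1$); the case $\alpha_{n-1}>\beta_{n-1}$ is symmetric. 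Hence $|a_n-b_n| \leq C_d+2$, and combining with (i) gives $L(H_n) \leq (C_d+2)+C_d = 3d+5$, so the choice $m_d := 3d+9$ works. The main subtlety lies in the inductive gap bound in (ii): although a single step of the lower-level vertex can overshoot the higher-level one in a single move, the deterministic edge-length bound from (i) prevents the inter-trajectory gap from ever inflating beyond $C_d+1$ in lattice coordinates.
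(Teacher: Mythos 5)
Your proof is correct, and it shares with the paper the one real quantitative ingredient: a deterministic a.s.\ bound on a single step, obtained by exhibiting a perturbed lattice point two lattice levels above the current position which is surely a valid candidate for $h(\cdot)$ within bounded $\|\cdot\|_1$ distance (the paper uses $g^\uparrow_n(\u)+e_d$, you use the nearest lattice point at level $\lfloor \x(d)\rfloor+2$; and you are right that no ``unexplored'' caveat is needed for this bound, since $h(\x)$ is an argmin over all of $V$ and exploredness never removes candidates). Where you diverge is in how the step bound is converted into a height bound. The paper observes the recursion $L(H_{n+1})\le\max\{L(H_n),\ \text{current increment}\}$ (up to a harmless additive constant when both vertices move): since the new triangles are centred at the currently lowest vertex and the bottom of the history region moves up with $r_n$, the height can never exceed the largest single increment, and no control of the separation between the two trajectories is required. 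You instead bound the height directly by the top of any history triangle minus $r_n$, which forces you to control $\max(g_n(\u)(d),g_n(\v)(d)) - r_n$; your induction on the lattice-level gap, exploiting that only the lower vertex moves when the floors differ and that a single move raises the floor by at most $C_d+1$, does this correctly. The payoff of your route is an explicit constant ($L(H_n)\le 3d+5$, so $m_d=3d+9$ works) together with an explicit proof that the two paths' levels cannot drift apart, a fact the paper's argument never needs and leaves implicit; the cost is an extra induction that the paper's recursion renders unnecessary.
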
 
\noindent \textbf{Proof :}
We first prove it for the marginal process $\{g_n(\u) : n \geq 0\}$.
We recall that for each $n \geq 0$, the history region $H_n$ represents the 
explored region in the upper half-plane $\mathbb{H}^+(g_{n}(\u)(d))$.
Since we always have $g_{n+1}(\u)(d) > g_{n}(\u)(d)$, the `height' of the
regions explored due to earlier movements, must decrease. On the other hand, the height of the newly created history region  
$B^+(g_n(\u), ||g_n(\u) - g_{n+1}(\u)||_1)$ must be bounded by the increment 
$||g_n(\u) - g_{n+1}(\u)||_1$. Hence, the function $L(H_n)$ 
satisfies the following recursion relation 
\begin{equation}
\label{eq:L_nEvolution_Relation}
L(H_{n+1}) \leq \max \{ L(H_{n}), ||g_n(\u) - g_{n+1}(\u)||_1\} \text{ with probability }1.
\end{equation}
Now, for the marginal process given ${\cal F}_n$,
 we must have that the lattice point $g^\uparrow_n(\u) + e_d$
has not been used up, i.e.,  $(g^\uparrow_n(\u) + e_d) \notin \Gamma_n$.
Our model ensures that the corresponding perturbed point must 
belong to the box $(g^\uparrow_n(\u) + e_d) + [-1, +1]^d $. Clearly, this gives an upper bound for the increment $||g_n(\u) - g_{n+1}(\u)||_1$.
Together with (\ref{eq:L_nEvolution_Relation}), this completes the proof for the marginal process.

For the joint exploration process, $L(H_n)$ function satisfies a similar recursion relation:
$$
L(H_{n+1}) \leq \max \{ L(H_{n}), ||g_n(\u) - h(g_{n}(\u))||_1, ||g_n(\v) - h(g_{n}(\v))||_1\} \text{ with probability }1.
$$ 
Though sometimes for a step of the joint exploration process a point moves twice, during the
second step it just follows the trajectory of the other point and hence does not 
generate any new explored region. 
For the joint process, at most one of the lattice points $g^\uparrow_n(\u) + e_d$ and $g^\uparrow_n(\v) + e_d$  could be used up, but there must be unexplored neighbours of these points and hence, similar argument proves the lemma for the joint exploration process as well. 
\qed

It is important to note that for any $j \geq 1$, 
the r.v. $\tau_j$ is \textit{not} a stopping time
w.r.t. the filtration $\{{\cal F}_n : n \geq 0 \}$, defined as in (\ref{eq:FFiltration_n}).
Because of this, we define a new filtration $\{ {\cal G}_n : n \geq 0\}$.

\begin{notation}
\label{not:G_n}
For any event $A$, let $\mathbf{1}_A$ denote the indicator r.v. of the event $A$.
For $n \geq 1$ we define the event 
$E_n = E_n(\u, \v) := A(g^\uparrow_{n-1}(\u) ) \cap A(g^\uparrow_{n-1}(\v))$.
 Now, set ${\cal G}_0 = {\cal F}_0$ and for $n \geq 1$ define 
\begin{align}
{\cal G}_n := \sigma \bigl ( \{(g_j(\u), g_j(\v)) : 0 \leq j \leq n-1\}, 
\Gamma_n, \mathbf{1}_{E_1}, \mathbf{1}_{E_2}, \cdots, \mathbf{1}_{E_n} \bigr).
\end{align}
\end{notation}
It is important to observe that the  $\sigma$-field ${\cal G}_n$ has information only about the lattice 
points $\widehat{g_n}(\u)$ and $\widehat{g_n}(\v)$, but \textit{not} about the original points $g_n(\u)$ and $g_n(\v)$. In addition, it also has information about occurrence or non-occurrence of the 
event $A(g^\uparrow_{n-1}(\u) ) \cap A(g^\uparrow_{n-1}(\v))$.
We did not include the points $g_n(\u)$ and $g_n(\v)$ in this $\sigma$-field purposefully. The reason 
will be made clear in the proof of Proposition \ref{prop:SinglePt_Rwalk}. 

On the event $A(g^\uparrow_{n-1}(\u) ) \cap A(g^\uparrow_{n-1}(\v))$ if we have 
$\widehat{g_n}(\u) = g^\uparrow_n(\u)$ and $\widehat{g_n}(\v) = g^\uparrow_n(\v)$, then the $n$-th 
step $\langle (g_{n-1}(\u), g_{n-1}(\v)), (g_{n}(\u), g_{n}(\v))\rangle$ is 
an up step and consequently a special up step as well. Hence it 
follows that for any $j \geq 1$, the r.v. $\tau_j$ is a stopping time 
w.r.t. the filtration $\{{\cal G}_n : n \geq 0 \}$ and hence $(\widehat{g_{\tau_j}}(\u), \widehat{g_{\tau_j}}(\v))$ is ${\cal G}_{\tau_j}$ adapted
as well.  The next proposition tells us that the r.v. $\tau_j$ is a.s. finite for all $j \geq 1$.
\begin{proposition}
\label{prop:BetaExpTail}
Fix any $j \geq 0$. There exist $C_0, C_1$ positive constants which depend only on 
the dimension $d$ such that for all $n \geq 1$ we have
\begin{equation}
\label{eq:BetaExpTail}
\P(\tau_{j+1} - \tau_j \geq n \mid {\cal G}_{\tau_j}) \leq C_0 \exp{(- C_1 n)}.
\end{equation}
\end{proposition}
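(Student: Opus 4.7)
The plan is to derive the exponential tail via a geometric ``good block'' argument. Partition the time after $\tau_j$ into disjoint windows $W_k := \{\tau_j + (k-1)m_d + 2, \ldots, \tau_j + k m_d + 1\}$ for $k \geq 1$, and let $B_k$ denote the event that every step in $W_k$ is an up step with the last one being a special up step. The definition of $\tau_{j+1}$ in (\ref{def:Tau1Step}) then forces $\tau_{j+1} \leq \tau_j + k m_d + 1$ whenever $B_k$ occurs, so
\[
\{\tau_{j+1} - \tau_j > K m_d + 1\} \subseteq \bigcap_{k=1}^{K} B_k^c.
\]
Hence it suffices to establish a one-block lower bound
\[
\P\bigl(B_k \mid {\cal G}_{\tau_j + (k-1) m_d + 1}\bigr) \geq p_0 \quad \text{almost surely,}
\]
for some constant $p_0 = p_0(d) > 0$; iterating the conditional estimate yields $\P(\tau_{j+1} - \tau_j > K m_d + 1 \mid {\cal G}_{\tau_j}) \leq (1 - p_0)^K$, and (\ref{eq:BetaExpTail}) follows upon absorbing the $m_d$ factor into the constants $C_0, C_1$.

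To prove the one-block estimate I would apply the Markov property of Proposition \ref{prop:DSFMarkov}: since ${\cal G}_n \subseteq {\cal F}_n$, by the tower property it is enough to bound the probability of $B_k$ conditional on the full state $(g_n(\u), g_n(\v), H_n, \Gamma_n) = (x_1, x_2, h, A)$ at time $n := \tau_j + (k-1) m_d + 1$, uniformly over valid choices of $(x_1, x_2, h, A)$. On the intersection $\bigcap_{i < k} B_i^c$ the state necessarily satisfies $\lfloor x_1(d) \rfloor = \lfloor x_2(d) \rfloor = \ell$, a condition that holds at $\tau_j$ by definition of a renewal step and is preserved under up-step dynamics. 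The event $B_k$ will be realized as soon as the perturbations of a specific finite collection of lattice points---namely the $m_d$ upward targets $g^{\uparrow}_{n + i}(\u), g^{\uparrow}_{n + i}(\v)$ for $0 \leq i < m_d$, together with every point of $N(g^{\uparrow}_{n + m_d - 1}(\u)) \cup N(g^{\uparrow}_{n + m_d - 1}(\v))$ required by Definition \ref{def:SpecialUpStep}---all lie in their prescribed regions $B^+(\cdot, \delta)$. Provided these $O(m_d \cdot 3^{d-1})$ lattice points are \emph{fresh}, i.e., they avoid $A$, then conditional on ${\cal F}_n$ their perturbations are i.i.d.~uniform on $[-1, +1]^d$, so the probability of the required configuration equals a product of positive-measure uniform events and is bounded below by a positive constant $p_0$ depending only on $d$ (via $\delta$ and $m_d$).

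The principal technical obstacle is verifying the freshness of the targeted lattice points. Although $A$ can in principle contain lattice points at vertical levels up to one above $\ell$, Lemma \ref{lem:HistoryHeightBound} caps the vertical extent of $H_n$ by $m_d - 4$, and a short bookkeeping argument on the joint exploration shows that the number of lattice points of $A$ lying at any given level is bounded by a constant in terms of $d$ and $m_d$. For $\delta$ sufficiently small, the successive targets of a candidate up-block along $\u$ collapse to the deterministic vertical column $g^{\uparrow}_n(\u) + i \, e_d$ for $i = 0, \ldots, m_d - 1$ (and analogously for $\v$), so freshness reduces to a finite number of non-coincidences. The few configurations in which a coincidence is forced can be side-stepped by selecting an alternative good-block template (for instance by permitting a preparatory ``shifted'' up step), which still realizes $B_k$ with positive probability. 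Once freshness is established uniformly, the remainder of the argument factors into independent uniform perturbation events and produces the desired $p_0 > 0$.
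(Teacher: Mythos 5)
Your overall scheme (block the time after $\tau_j$ into windows of length $m_d$, prove a uniform lower bound for the probability of an all-up-step block ending in a special up step, and iterate to get a geometric tail) is structurally in the spirit of the paper's proof, but the justification of the one-block estimate misses the central difficulty of this model, and as written the key step is false. You claim that $B_k$ ``will be realized as soon as'' the perturbations of the targeted column of lattice points lie in their prescribed regions $B^+(\cdot,\delta)$, and that freshness (avoiding $A=\Gamma_n$) makes these perturbations i.i.d.\ uniform on $[-1,+1]^d$ given ${\cal F}_n$. Neither assertion holds. First, a lattice point $\w\notin\Gamma_n$ is not ``fresh'' in your sense: ${\cal F}_n$ carries the information that the interior of $H_n$ contains no point of $V$, so the conditional law of $\w+U_\w$ is uniform on $B(\w)\setminus H_n$, which can be a very small and badly placed region. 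Second, even if all targets land in $B^+(\cdot,\delta)$, the next step is the \emph{nearest} point of $V$ with larger $d$-th coordinate, so a competitor point associated with a partially explored lattice point in $S^{\text{low}}_n$ (or $S^{\text{same}}_n$) can be closer and block the up step; this is exactly the obstruction the paper illustrates in Figure \ref{fig:PLatticeDifficulty}. Ruling this out is the heart of the matter: the paper needs the unexplored-cone Lemma \ref{lemma:UnexploredCone}, the geometric ratio-of-Lebesgue-measures argument of Lemma \ref{lem:TopFace_LowRegionBound} (showing that a constrained competitor is, with probability uniformly bounded below, either pushed next to the top face of its box or into the lower half-plane), and the explicit events $A^1_n, A^2_n, A^3_n$ to force first an up step within geometrically many steps, then consecutive up steps, then the special up step. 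Your ``principal technical obstacle'' paragraph addresses only collisions with $\Gamma_n$, which is the easy bookkeeping part, not the constrained conditional distributions created by $H_n$.

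Two further inaccuracies would also need repair. The claim that on $\bigcap_{i<k}B_i^c$ the two current points satisfy $\lfloor x_1(d)\rfloor=\lfloor x_2(d)\rfloor$ is not true in general: once a block fails, the joint dynamics can leave the two paths at different lattice levels, and one must wait (geometrically many steps, as in the paper's remark following Lemma \ref{lem:GivenUpStep_NextUpStepBound}) for the lower one to catch up before an up step of the joint process is even defined. Relatedly, demanding that \emph{every} step of a prescribed window of exactly $m_d$ steps be an up step with uniformly positive probability is stronger than what the geometry gives you directly: starting from an arbitrary history the path may first have to spend a bounded but positive number of steps clearing nearby constrained points (this is why the paper proves a geometric tail for the time to the \emph{first} up step, Lemma \ref{lem:SpecialSingleUpStep}, rather than a per-window bound). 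The iteration scheme itself is fine and standard; what is missing is the geometric control of the history region that makes the one-block (or one-trial) lower bound true, and that is precisely where the paper's work lies.
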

The proof of Proposition \ref{prop:BetaExpTail} is non-trivial mainly because of the dependency
of the point process considered here. In the next section, we will 
first explain the difficulty involved in proving this proposition and then prove this proposition. 
Before ending this section we just mention here briefly 
the significance of our renewal steps for the marginal process and the joint process.

Note that for any $j \geq 1$, the lattice points $\widehat{g_{\tau_j}}(\u)$ and $\widehat{g_{\tau_j}}(\v)$ are ${\cal G}_{\tau_j}$ measurable and for the marginal process $\{g_n(\u) : n \geq 0\}$, the
 renewal steps allow to restart the process from the lattice point $\widehat{g_{\tau_j}}(\u)$
 with the initial condition that for all $\w \in N(\widehat{g_{\tau_j}}(\u))$
 (where the set $N(\widehat{g_{\tau_j}}(\u))$ is defined as in (\ref{def:InftyNbd})), the associated 
 perturbed point $\w + U_\w$ belongs to $B^+(\w, \delta)$. In Proposition \ref{prop:SinglePt_Rwalk} we will show  that this allows us to divide the marginal process into blocks of i.i.d. increments.  
 
 For the joint exploration process, we can again restart the process at renewal steps   
 from the lattice points $\widehat{g_{\tau_j}}(\u)$ and $\widehat{g_{\tau_j}}(\v)$. 
In Proposition \ref{prop:TwoPt_Markovwalk} we show that the  difference between two  
restarted DSF paths observed at renewal steps is Markov as well. For $d=2$, we  
further show that far from the origin, this process behaves like a symmetric random walk.
 This gives us that the coalescing time between two 
DSF paths is an a.s. finite r.v. with suitable tail decay. This would be crucial for proving 
convergence to the Brownian web. For $d=3$, we use the Lyapunov function technique to conclude our result.
  In the next section we prove Proposition \ref{prop:BetaExpTail}. 

\section{ Proof of Proposition \ref{prop:BetaExpTail} }
\label{sec:ProofTauTail} 

\subsection{Proof of the main proposition}
\label{subsec:ProofTauTail}
Proposition \ref{prop:BetaExpTail}  will be proved through a sequence of lemmas. 
We first present a property of the joint exploration process 
that will be heavily used in the sequel. We need to introduce some notations first. 
Let $C(\mathbf{0}) := \{ (r,\theta) : r \geq 0, \theta \in (\pi/4, 3\pi/4)\}$ denote the $\pi/2$ 
angular (open) cone centred at the origin. 
For $\x \in \R^d$ the corresponding cone centred at $\x$ is given by $C(\x) := \x + C(\mathbf{0}) $. 

\begin{lemma} 
\label{lemma:UnexploredCone}
Fix any $n \in \N$. Given ${\cal F}_n$, the $\pi/2$ angular cones centred at $g_n(\u)$ and $g_n(\v)$ are unexplored with probability $1$, i.e., we have,
\begin{equation}
\label{eq:UnexploredCone}
(C(g_n(\u)) \cup C(g_n(\v))) \cap H_n = \emptyset  \text{ a.s. }
\end{equation}
In fact for any $l_1, l_2 \geq 0$ we have 
$$
(C(g_n(\u) + l_1 e_d) \cup C(g_n(\v)  + l_2 e_d)) \cap H_n = \emptyset  \text{ a.s. }
$$
\end{lemma}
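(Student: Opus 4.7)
The proof would proceed via a metric lemma combined with induction on $n$. The metric lemma is the following: for $\y, \x \in \R^d$ with $\y(d) \geq \x(d)$, the open cone $C(\y)$ is disjoint from the closed half-ball $B^+(\x, \|\y - \x\|_1)$. To prove this, take any $\z \in C(\y)$; since $\z(d) > \y(d) \geq \x(d)$ we can write
\[
\|\z - \x\|_1 = (\z(d) - \x(d)) + \sum_{i<d}|\z(i) - \x(i)|.
\]
Applying the reverse triangle inequality $|\z(i) - \x(i)| \geq |\y(i) - \x(i)| - |\z(i) - \y(i)|$ for each $i < d$ and collecting terms yields
\[
\|\z - \x\|_1 \geq \|\y - \x\|_1 + \Bigl[(\z(d) - \y(d)) - \sum_{i<d}|\z(i) - \y(i)|\Bigr],
\]
where the bracketed expression is strictly positive by the defining inequality of $C(\y)$. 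Hence $\|\z - \x\|_1 > \|\y - \x\|_1$, so $\z \notin B^+(\x, \|\y - \x\|_1)$.

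With this geometric fact in hand, I would prove the lemma by induction on $n$. The base case $n = 0$ is immediate since $H_0 = \emptyset$. For the inductive step, $H_n$ decomposes as a finite union of upper $L^1$ half-balls $B^+(\x_j, r_j)$, where each centre $\x_j$ is a previous position of $\u$ or $\v$ along the joint exploration and $r_j = \|h(\x_j) - \x_j\|_1$ is the $L^1$ distance to the corresponding next step. To establish $C(g_n(\u)) \cap B^+(\x_j, r_j) = \emptyset$ for each $j$, I would observe that $g_n(\u)$ is a point of $V$, so whenever $g_n(\u)(d) \geq \x_j(d)$ it is a legitimate candidate in the minimisation defining $h(\x_j)$; consequently $r_j \leq \|g_n(\u) - \x_j\|_1$, and the metric lemma (applied with $\y = g_n(\u)$) yields the desired disjointness. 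A symmetric argument handles $C(g_n(\v))$. The strengthened form with shifts $l_1, l_2 \geq 0$ is an immediate consequence via the nesting $C(g_n(\w) + l e_d) \subseteq C(g_n(\w))$ for $l \geq 0$, so once the $l_1 = l_2 = 0$ case is settled the general one follows for free.

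The main obstacle I anticipate is handling the case in which a centre $\x_j = g_{k-1}(\v)$ of a previously added $\v$-ball lies strictly above $g_n(\u)(d)$, so that $g_n(\u)$ is \emph{not} a candidate for $h(\x_j)$ and the metric lemma does not directly apply. In this situation I would exploit the joint-exploration rules: whenever a $\v$-ball of this kind is created both paths were at the same lattice floor, which combined with the $e_d$-monotonicity of the marginal paths confines the lead of $\v$ over $\u$ to a single floor. This should allow me to re-centre the argument at an intermediate position where the geometric lemma is again directly applicable, or to invoke the inductive hypothesis $C(g_{n-1}(\u)) \cap H_{n-1} = \emptyset$ together with the fact that $g_n(\u) = h(g_{n-1}(\u))$ is reached by an upward step whose $L^1$ length is controlled by the perturbation bound. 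The delicate part will be turning the within-one-floor bound from the evolution rule into a clean geometric inclusion that meshes with the cone structure above.
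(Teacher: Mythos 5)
Your main line is essentially the paper's own argument, written in contrapositive form. The paper's proof consists of the observation that if a half-ball $B^+(\x,r)$ whose centre lies at height at most $g_n(\u)(d)\wedge g_n(\v)(d)$ meets one of the cones, then the apex ($g_n(\u)$ or $g_n(\v)$) must lie in the interior of $B^+(\x,r)$, which is impossible since that interior is free of points of $V$; your metric lemma together with the remark that $g_n(\u)\in V$ is a candidate in the argmin defining $h(\x_j)$, so that $r_j\le ||g_n(\u)-\x_j||_1$, is the same computation packaged slightly differently (and your explicit $||\cdot||_1$ formulation of the cone inequality is in fact the version one needs for $d=3$, where the paper's purely angular description is ambiguous). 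The reduction of the shifted-cone statement to the case $l_1=l_2=0$ via $C(\cdot+l e_d)\subseteq C(\cdot)$ is also exactly the paper's. The induction on $n$ adds nothing beyond the decomposition of $H_n$ into half-balls, which the paper uses directly.

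The genuine gap is the case you flag at the end and then do not resolve: balls $B^+(g_{k-1}(\v),\cdot)$ whose centre lies strictly above $g_n(\u)(d)$. There your key inequality $r_j\le||g_n(\u)-\x_j||_1$ is unavailable (the apex is not a candidate for $h(\x_j)$), and the metric lemma does not apply; your proposed remedies ("re-centre at an intermediate position", "within-one-floor bound") are only a plan, not an argument, so as written the proof is incomplete precisely at this point. Note that this case is not vacuous: when both vertices move in the same step (same lattice floor), the trailing path's new position $g_n(\u)=h(g_{n-1}(\u))$ can land strictly below the other path's old position $g_{n-1}(\v)$, and if the horizontal separation is smaller than the length of $\v$'s step plus the height difference, the freshly created ball around $g_{n-1}(\v)$ does reach into $C(g_n(\u))$ without any conflict with the argmin structure. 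The paper itself does not confront this situation: its proof rests on the asserted structural form of $H_n$ as $(\cup_i B^+(\x_i,l_i))\cap\mathbb{H}^+(r_n)$ with every centre satisfying $\x_i(d)\le r_n=g_n(\u)(d)\wedge g_n(\v)(d)$, which makes the geometric step apply to every ball but is exactly the point your "delicate case" calls into question (and which cannot be restored by simply recentring a high ball at a lower point, since that enlarges the region). So to complete your proof along this route you must either justify that structural claim about $H_n$ or give a real argument for the above-apex balls; the sketch does not yet do either, and that is the one missing piece.
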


\noindent \textbf{Proof: } The above lemma is a consequence of simple geometric properties. 
See Figure 1 for an illustration of this lemma.
We first recall the fact that for any $n \geq 1$, the explored region $H_n$ is of the 
form $\cup_{i=1}^k B^+(\x_i,r_i) \cap \mathbb{H}^+(g_n(\u)(d)\wedge g_n(\v)(d))$ for some $k \geq 1$, point $\x_i \in \mathbb{H}^-(g_n(\u)(d)\wedge g_n(\v)(d)) $ and $r_i \geq 0$ for all $1 \leq i\leq k$.
If any $||\quad ||_1$ triangle $B^+(\x , r)$ with $\x(d) < g_n(\u)(d)\wedge g_n(\v)(d)$
intersects with either of the two cones, $C(g_n(\u))$ and $C(g_n(\v))$, then either $g_n(\u)$
or $g_n(\v)$ must lie in the interior of $B^+(\x , r)$. This contradicts the fact 
that the interior of the explored region $H_n$ must be 
free from the perturbed points. 
Finally for any $l_1, l_2 \geq 0$ we have that  $C(g_n(\u) + l_1 e_d) \subseteq C(g_n(\u))$ as well as 
$C(g_n(\v) + l_2 e_d) \subseteq C(g_n(\v))$. This completes the proof.
\qed

The above lemma will be an important tool to prove Proposition \ref{prop:BetaExpTail}. 
Heuristically, since there are unexplored cones in the upward direction centred at $g_n(\u)$ and $g_n(\v)$, using them one can construct a  favourable configuration for an up step  such that the 
probability of having such a configuration is bounded away from zero irrespective of the 
history set. But because of the dependency of the perturbed lattice points, it is 
hard to implement this seemingly easy strategy for our model. 

In order to illustrate the difficulty involved, we need some more notations. 
Consider the marginal process $\{g_n(\u) : n \geq 0\}$
starting from $\u$. Recall that $g^\downarrow_n(\u)$ denotes the 
lattice point closest to $g_n(\u)$ on the 
hyperplane $\{\x \in \R^d : \x(d) = \lfloor g_n(\u)(d) \rfloor\}$. 
Now we define the following sets of lattice points:
\begin{notation}
\label{def:S^l_n} 
\begin{align*}   
S^\text{low}_n = S^{\text{low}}_n(\u) & := \{ \w \in \Z^d : \w(d) = \lfloor g_n(\u)(d) \rfloor
\text{ with } \w \notin \Gamma_n \text{ and }
||\w - g^\downarrow_n(\u)||_\infty \leq 1 \}. \\
S^\text{up}_n = S^{\text{up}}_n(\u) & := \{ \w \in \Z^d : \w(d) = \lfloor g_n(\u)(d) \rfloor + 2,  || \w - (g^\uparrow_n(\u) + e_d)||_\infty \leq 1 , \w \notin \Gamma_n   \} \text{ and }\\
S^\text{same}_n = S^\text{same}_n(\u) & := \{ \w \in \Z^d : \w(d) = \lfloor g_n(\u)(d) \rfloor + 1,  || \w - g^\uparrow_n(\u) ||_\infty \leq 1 , \w \notin \Gamma_n  \}.
\end{align*}
\end{notation}
$S^{\text{low}}_n$ gives the set of unexplored lattice points $\w$ at the lower level $\{\w \in \Z^d :\w(d) = \lfloor g_n(\u)(d) \rfloor\}$  with 
$||\w - g^\downarrow_n(\u)||_\infty \leq 1$.
Similarly the sets $S^{\text{same}}_n$ and $S^{\text{up}}_n$ are defined.

There could be situations when the feasible regions available for the perturbed points $\w + U_\w$ for $\w \in S^\text{low}_n$ 
are very very restrictive and this creates an obstacle to have a favourable configuration  for an up step at the next step.  
In Figure \ref{fig:PLatticeDifficulty} we describe one such situation. Hence we need to consider significant modifications
of the strategy described earlier.The main idea is to show that there are some `good' situations when the process 
has an uniform  strictly positive lower bound for the probability for taking an up step and 
the process will encounter such good situations often. The reason for considering $||\quad||_\infty$
norm instead of $||\quad||_1$ norm in the definition of the above sets is that, 
for $d=3$, the perturbed point $\w + U_\w$ associated to an unexplored lattice point $\w \in \Z^d \setminus \Gamma_n$ with $\w(d) = \lfloor g_n(\u)(d)\rfloor$ and $||\w - g^\downarrow_n(\u)||_\infty \leq 1$ may still create problem for an up step.  
Now we proceed to the proof of Proposition \ref{prop:BetaExpTail} which will be done 
through a sequence of lemmas. To keep the notations simple, we prove it for a single DSF path process, i.e., $\{g_n(\u) : n \geq 0\}$ and the argument is exactly the same for the joint exploration process starting from $\u$ and $\v$. We need to introduce some more notations.

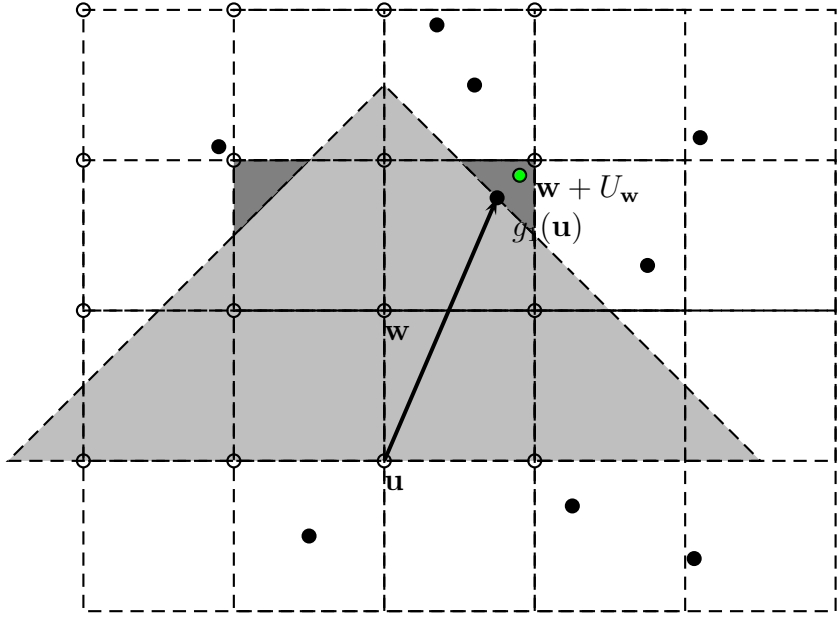
\begin{figure}[ht!]
\label{fig:PLatticeDifficulty}
\begin{center}

\begin{pspicture}(2,-2)(6,7)
\pspolygon[linestyle=dashed, fillstyle = solid, fillcolor = lightgray](4,5) (-1,0)(9,0)
\pspolygon[linestyle=dashed, fillstyle = solid, fillcolor = gray](6,4)(5,4)(6, 3)
\pspolygon[linestyle=dashed, fillstyle = solid, fillcolor = gray](2,4)(3,4)(2, 3)

\pscircle[fillcolor=black,fillstyle=solid](5.5,3.5){.1}

\pscircle[fillcolor=black,fillstyle=none](0,0){.1}
\pscircle[fillcolor=black,fillstyle=none](2,0){.1}
\pscircle[fillcolor=black,fillstyle=none](4,0){.1}
\pscircle[fillcolor=black,fillstyle=none](6,0){.1}
\pscircle[fillcolor=black,fillstyle=none](0,2){.1}
\pscircle[fillcolor=black,fillstyle=none](2,2){.1}
\pscircle[fillcolor=black,fillstyle=none](4,2){.1}
\pscircle[fillcolor=black,fillstyle=none](6,2){.1}
\pscircle[fillcolor=black,fillstyle=none](0,4){.1}
\pscircle[fillcolor=black,fillstyle=none](2,4){.1}
\pscircle[fillcolor=black,fillstyle=none](4,4){.1}
\pscircle[fillcolor=black,fillstyle=none](6,4){.1}
\pscircle[fillcolor=black,fillstyle=none](0,6){.1}
\pscircle[fillcolor=black,fillstyle=none](2,6){.1}
\pscircle[fillcolor=black,fillstyle=none](4,6){.1}
\pscircle[fillcolor=black,fillstyle=none](6,6){.1}

\pscircle[fillcolor=black,fillstyle=solid](5.2,5){.1}
\pscircle[fillcolor=black,fillstyle=solid](4.7,5.8){.1}

\pspolygon[linestyle=dashed](4,2)(4,-2)(0,-2)(0,2)
\pspolygon[linestyle=dashed](6,2)(6,-2)(2,-2)(2,2)
\pspolygon[linestyle=dashed](8,2)(8,-2)(4,-2)(4,2)
\pspolygon[linestyle=dashed](10,2)(10,-2)(6,-2)(6,2)

\pspolygon[linestyle=dashed](4,4)(4,0)(0,0)(0,4)
\pspolygon[linestyle=dashed](6,4)(6,0)(2,0)(2,4)
\pspolygon[linestyle=dashed](8,4)(8,0)(4,0)(4,4)
\pspolygon[linestyle=dashed](10,4)(10,0)(6,0)(6,4)

\pspolygon[linestyle=dashed](4,6)(4,2)(0,2)(0,6)
\pspolygon[linestyle=dashed](6,6)(6,2)(2,2)(2,6)
\pspolygon[linestyle=dashed](8,6)(8,2)(4,2)(4,6)
\pspolygon[linestyle=dashed](10,6)(10,2)(6,2)(6,6)

\psline[linewidth=1.5pt]{->}(4,0)(5.5,3.5)

\pscircle[fillcolor=black,fillstyle=solid](8.2,4.3){.1}

\pscircle[fillcolor=black,fillstyle=solid](7.5,2.6){.1}

\pscircle[fillcolor=black,fillstyle=solid](1.8,4.18){.1}
\pscircle[fillcolor=green,fillstyle=solid](5.8,3.8){.1}
\pscircle[fillcolor=black,fillstyle=solid](8.12,-1.3){.1}
\pscircle[fillcolor=black,fillstyle=solid](3,-1){.1}

\pscircle[fillcolor=black,fillstyle=solid](6.5,-.6){.1}

\rput[tl](4,-.2){$\mathbf{u}$}
\rput[tl](4,1.8){$\mathbf{w}$}
\rput[tl](6,3.8){$\mathbf{w}  + U_\mathbf{w}$}
\rput[tl](5.7,3.3){$g_1(\mathbf{u})$}

\end{pspicture}
\caption{For $d=2$, the above figure explains the difficulty with this model. Given that $\mathbf{u}$ connects to the point
$g_1(\mathbf{u})$, the feasible region for the perturbed point
$\mathbf{w} + U_{\mathbf{w}}$  (shown as a green dot) is very restrictive (shown as 
the gray shaded region in the above figure). This causes problem for creating a favourable configuration for an up step as the point $g_1(\u)$ might be forced to take step to $\mathbf{w} + U_{\mathbf{w}}$.}
\end{center}
\end{figure}

We recall the definitions of $S^{\text{same}}_n$ and $S^{\text{up}}_n$
from Notation \ref{def:S^l_n}. For the joint exploration process 
$\{ (g_n(\u), g_n(\v)) : n \geq 0\}$ the sets $S^\text{lower}_n, S^\text{up}_n, S^\text{same}_n$ are defined only when we have $\lfloor g_n(\u)(d) \rfloor = \lfloor g_n(\v)(d) \rfloor$ and in that case these sets are defined in a similar way
 by considering both the vertices $g_n(\u)$ and $g_n(\v)$. In other words, they are simply union of the corresponding sets for  $g_n(\u)$ and $g_n(\v)$.

In what follows, for any set $A$, the notation $\# A$ denotes the cardinality of $A$.     
Our next corollary follows from simple geometric properties.
\begin{corollary}
\label{cor:JtExplorationProcessS_n_Properties}
Fix $n \in \N$. Given ${\cal F}_n$, there exist $c^1_d, c^2_d \in \N$ depending only on $d$
 such that we have
\begin{itemize}
\item[(i)] $\# (S^\text{low}_n) \leq c^1_d$;
\item[(ii)] $2 \leq \# (S^\text{up}_n \cup S^\text{same}_n) \leq c^2_d $.
\end{itemize}
\end{corollary}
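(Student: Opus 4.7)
The plan is to obtain both statements by elementary lattice counting combined with one short observation about monotonicity of the path in the $d$-th coordinate.

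\textbf{Upper bounds for (i) and the upper bound in (ii).} For the marginal process, each of the sets $S^{\text{low}}_n(\u)$, $S^{\text{up}}_n(\u)$, $S^{\text{same}}_n(\u)$ is, by definition, contained in a set of the form $\{\w \in \Z^d : \w(d) = k,\ \|\w - \w_0\|_\infty \leq 1\}$ for some $k \in \Z$ and some reference lattice point $\w_0 \in \Z^d$. Such a set consists of lattice points on a single hyperplane lying inside an $\|\cdot\|_\infty$-ball of radius $1$, hence has exactly $3^{d-1}$ members. For the joint exploration process (when $\lfloor g_n(\u)(d)\rfloor = \lfloor g_n(\v)(d)\rfloor$), each of $S^{\text{low}}_n$, $S^{\text{up}}_n$, $S^{\text{same}}_n$ is the union of the corresponding two marginal sets, so each has at most $2 \cdot 3^{d-1}$ elements. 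Thus one can take $c^1_d = 2 \cdot 3^{d-1}$ and, for the upper half of (ii), $c^2_d = 4 \cdot 3^{d-1}$.

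\textbf{Lower bound in (ii).} The only non-routine point is to show that no lattice point at level $L+2$ or higher, where $L := \lfloor g_n(\u)(d) \rfloor$, can belong to $\Gamma_n$. I would argue by contradiction: suppose $\widehat{g_j}(\u) = \w$ for some $j \le n$ with $\w(d) \geq L+2$. Since by construction $U_\w \in [-1,+1]^d$, the $d$-th coordinate of the perturbed point satisfies $g_j(\u)(d) = \w(d) + U_\w(d) \geq L+2-1 = L+1$. On the other hand, each step of the joint exploration process either leaves a vertex fixed or moves it to a point with strictly larger $d$-th coordinate, so the $d$-th coordinate of the $\u$-trajectory is non-decreasing, giving $g_j(\u)(d) \leq g_n(\u)(d) < L+1$, a contradiction. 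The same argument, with $\u$ replaced by $\v$, handles lattice points of the form $\widehat{g_j}(\v)$ (using $\lfloor g_n(\v)(d) \rfloor = L$ in the joint case).

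Once this is in hand, the lower bound follows immediately: every one of the $3^{d-1}$ lattice points in the $\|\cdot\|_\infty$-neighbourhood of $g^\uparrow_n(\u) + e_d$ at level $L+2$ is then automatically outside $\Gamma_n$ and hence lies in $S^{\text{up}}_n$. Since $d \in \{2,3\}$, we have $\# S^{\text{up}}_n \geq 3^{d-1} \geq 3 \geq 2$, as required. I do not anticipate any serious obstacle: the whole proof is a combination of the trivial count $|\{\w \in \Z^d : \w(d)=k, \|\w-\w_0\|_\infty \leq 1\}| = 3^{d-1}$ with the monotonicity observation above, and the perturbation bound $U_\w(d) \in [-1,1]$ is exactly what closes the gap between the current lattice level $L$ and the forbidden level $L+2$.
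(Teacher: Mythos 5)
Your proof is correct and rests on the same two ingredients as the paper's: the trivial count of lattice points of a fixed level inside an $\|\cdot\|_\infty$-ball of radius $1$ (the exact constants $c^1_d, c^2_d$ are immaterial, so your $3^{d-1}$ versus the paper's cruder bound makes no difference), and the observation that lattice points at level $\lfloor g_n(\u)(d)\rfloor+2$ cannot have been explored. The one genuine difference is in the joint case: you exploit the convention that $S^{\text{up}}_n, S^{\text{same}}_n$ are only defined when $\lfloor g_n(\u)(d)\rfloor=\lfloor g_n(\v)(d)\rfloor$, and then monotonicity of the $d$-th coordinate along each trajectory plus $|U_\w(d)|\leq 1$ gives that \emph{no} level-$(L+2)$ point lies in $\Gamma_n$ — a clean and slightly stronger conclusion than the paper's. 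The paper instead only shows that \emph{at most one} such point can belong to $\Gamma_n$, arguing through the movement rule (a path that has reached such a high perturbed point stays put afterwards, so two such points cannot both have been used). That weaker statement is the one which survives when the same reasoning is applied to the lower vertex of the joint process at a step where the two floors are \emph{not} equal — exactly the situation invoked in the remark at the end of Section \ref{sec:lemmas} — and your equal-floors argument does not cover that case. So your argument fully proves the corollary as stated, but keep the paper's "at most one used" version in mind if you later reuse the bound for two paths at different levels.
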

\noindent \textbf{ Proof :} Item (i) follows from the observation that $\# (S^\text{low}_n) \leq 2d$. 
For item (ii) the same argument provides an upper bound for the set $S^\text{up}_n \cup S^\text{same}_n$.
It is straightforward to observe that for the marginal process 
$\{g_n(\u) : n \geq 0\}$, none of the vertices in  the set $\{\w \in \Z^d : \w(d) = \lfloor g_n(\u)(d) \rfloor + 2 \}$ can belong to the set $\Gamma_n$ and hence the number $2$ clearly provides an lower bound for the cardinality of the set $S^\text{up}_n \cup S^\text{same}_n$.    
For the joint exploration process, if there exists a lattice point $\w \in \Gamma_n $ with $
\w(d) = \lfloor g_n(\u)(d) \rfloor + 2 $ then we must have $g_j(\v) = \w + U_\w$ for some $j \leq n$. 
Further, our movement algorithm ensures that after reaching this perturbed point $\w + U_\w$, 
the other DSF path stays put there. The situation remains the same if the roles of $\u$ and $\v$
are interchanged. This ensures that while working with the joint exploration process,
 at most one point in the set $\{\w \in \Z^d : \w(d) = \lfloor g_n(\u)(d) \rfloor + 2 \}$
 belongs to $\Gamma_n$. This completes the proof.
\qed

Now we proceed with the proof of Proposition \ref{prop:BetaExpTail}. 
We will first prove this proposition for the marginal process $\{g_n(\u) : n \geq 0\}$ 
and then for the joint process.  It will 
be proved through a sequence of lemmas. First we will state some lemmas and assuming that 
these lemmas hold true we will prove Proposition \ref{prop:BetaExpTail}. After 
that in Section \ref{sec:lemmas}, we will prove each of these lemmas. 

From the above discussion it follows that the points in the set $S^{\text{low}}_n$ 
may create problem for an up step. The next lemma says that for $\w \in S^{\text{low}}_n$, 
the point $\w + U_\w$ could be placed either very 
close to the top face of the box $B(\w) := \w + [-1, +1]^d$ or in the lower half-plane 
$\mathbb{H}^-(g_n(\u)(d))$ with probability uniformly bounded away from zero. The intuition is that, 
if any such point lies too close to the top face of the box $B(\w)$
and if $g_n(\u)$ connects to that point, it is highly likely that soon after
it will take an up step.  

Recall the small positive constant $\delta$. For $\w \in \Z^d$, 
we consider the region $R(\w) := B(\w) \cap \mathbb{H}^+(\w(d) + (1 - \delta))$
 close to the top face of the box $B(\w)$. 
\begin{lemma}
\label{lem:TopFace_LowRegionBound}
Fix $n \in \N$. There exists $p_0 > 0$ which depends only on $\delta$ and dimensions $d$
such that
for any $\w \in S^{\text{low}}_n$ we have 
\begin{equation}
\label{eq:TopFace_LowRegionBound}
\P( \w + U_\w \in (R(\w ) \cup \mathbb{H}^-(g_n(\u)(d))  \mid  {\cal F}_n ) \geq p_0,
\end{equation}
\end{lemma}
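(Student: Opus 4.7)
The plan is to first identify the conditional law of $U_\w$ given $\mathcal{F}_n$ and then reduce the claim to a deterministic volume comparison inside the box $B(\w)$. Since $\w \in S^{\text{low}}_n$ implies $\w \notin \Gamma_n$, the perturbation $U_\w$ is independent of the family $\{U_\y : \y \in \Gamma_n\}$ that generates $\mathcal{F}_n$, and the only information $\mathcal{F}_n$ carries about $U_\w$ is the past ``no-competitor'' constraint inherited from the rule $g_j(\u) = h(g_{j-1}(\u))$: for every $j = 1, \dots, n$ one must have $\w + U_\w \notin B^+(g_{j-1}(\u), r_j)^\circ$ with $r_j := ||g_j(\u) - g_{j-1}(\u)||_1$. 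Setting $F_n := \bigcup_{j=1}^n B^+(g_{j-1}(\u), r_j)$, the conditional law of $U_\w$ given $\mathcal{F}_n$ is therefore uniform on $\{u \in [-1,+1]^d : \w + u \notin F_n^\circ\}$. Writing $A := R(\w) \cup \mathbb{H}^-(g_n(\u)(d))$, the target probability equals
\[
\frac{|A \cap B(\w) \setminus F_n|}{|B(\w) \setminus F_n|} \;\geq\; \frac{|A \cap B(\w) \setminus F_n|}{2^d},
\]
so it suffices to exhibit, uniformly over admissible histories, a sub-region of $A \cap B(\w)$ of Lebesgue volume at least some $c_d(\delta) > 0$ that is disjoint from $F_n$. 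Then $p_0 := c_d(\delta)/2^d$ works.

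Two geometric facts drive the construction of such a safe sub-region. First, the step-size bound established inside the proof of Lemma \ref{lem:HistoryHeightBound} gives a universal constant $C_d$ with $r_j \leq C_d$ for every $j$; since distinct past trajectory points correspond via $\x \mapsto \widehat{\x}$ to distinct lattice points in $\Z^d$, only past half-balls whose centers $g_{j-1}(\u)$ lie in an $l_1$-neighbourhood of $B(\w)$ of radius $O(C_d+d)$ can intersect $B(\w)$, and the number of such lattice points is bounded by a universal constant $N_d$. Second, by Lemma \ref{lemma:UnexploredCone} (with its extension to every vertical translate of $C(g_n(\u))$), the upward $\pi/2$ cone above $g_n(\u)$ is entirely disjoint from $F_n$.

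A case split on the fractional offset $f := g_n(\u)(d) - \w(d) \in [0, 1)$ then produces the safe sub-region. When $f$ is small, the strip $R(\w)$ lies comfortably inside $\mathbb{H}^+(g_n(\u)(d))$, and an elementary geometric computation shows that $R(\w) \cap C(g_n(\u))$ has $d$-dimensional volume at least some $c_d(\delta) > 0$; the cone property then places this sub-region entirely outside $F_n$. When $f$ is close to $1$, the lower half $\mathbb{H}^-(g_n(\u)(d)) \cap B(\w)$ has volume at least $2^{d-1}$, while the cardinality bound $N_d$ together with the bounded individual half-ball volumes controls the total footprint of $F_n$ inside $B(\w)$; picking a ``deep'' sub-pocket in the lower half, shielded from each past half-ball by its $d$-directional extent, then yields a safe region of volume at least $c_d(\delta)$. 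The main obstacle is precisely this $f$-large regime, where past half-balls can in principle protrude into the lower half of $B(\w)$ from below; the argument depends essentially on the uniform-in-history cardinality bound $N_d$ supplied by the discrete lattice structure to rule out any adversarial history in which the finitely many relevant half-balls conspire to swallow the lower half.
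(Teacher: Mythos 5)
Your reduction is where the argument breaks. After writing the conditional probability correctly as the ratio $\ell\bigl((A\cap B(\w))\setminus F_n\bigr)/\ell\bigl(B(\w)\setminus F_n\bigr)$, you replace the denominator by $2^d$ and aim for a \emph{uniform absolute} lower bound $\ell\bigl((A\cap B(\w))\setminus F_n\bigr)\geq c_d(\delta)$ over all admissible histories. No such bound exists: the whole unexplored portion of $B(\w)$ can have arbitrarily small Lebesgue measure. The facts you invoke (step lengths bounded by a constant $C_d$, at most $N_d$ past half-balls meeting $B(\w)$) do not prevent this, because each half-ball has $\ell_1$-radius up to $C_d\geq 2$, i.e.\ comparable to or larger than the box $B(\w)$ itself, so a handful of them can cover all of $B(\w)$ except a thin sliver; the only hard constraint is that the sliver be nonempty (it must contain $\w+U_\w$), not that it have constant volume. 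Concretely, with positive probability the path approaches $g_n(\u)$ through a few long, nearly horizontal steps passing below and across $B(\w)$, whose explored half-balls sweep out all of $B(\w)$ except an $\epsilon$-neighbourhood of the top face, with the perturbed points of the nearby lattice vertices (including $\w$) hiding in that sliver and in corners outside the box. For such histories your target sub-region of volume $c_d(\delta)$ simply does not exist, so the "$f$ close to $1$" case cannot be completed by a cardinality count, and the same obstruction appears for intermediate $f$ as well. (Your small-$f$ case, via Lemma \ref{lemma:UnexploredCone}, is fine in spirit, and in that regime an absolute bound does hold.)

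The paper's proof keeps the random denominator and proves the ratio bound (\ref{eq:LebesgueMeasureRatio}) instead, via a dichotomy on the points $\x$ of the top face of $B(\w)$: either some left or right $\delta$-triangle $\Delta^\x_l,\Delta^\x_r$ hanging below the top face is completely unexplored, in which case the ratio of unexplored measure of $R(\w)$ to unexplored measure of $B(\w)\cap\mathbb{H}^+(g_n(\u)(d))$ is at least $\delta^2/8$; or every such triangle is clipped on both sides by history, and then the $\ell_1$-geometry of the history set (a union of upper half-balls $B^+(\x_i,r_i)$ with centres below the current level) forces those history regions to merge before reaching the line at height $\w(d)+1-\delta$, so the \emph{entire} unexplored part of $B(\w)\cap\mathbb{H}^+(g_n(\u)(d))$ lies inside $R(\w)$ and the ratio equals $1$ even though the absolute unexplored volume may be tiny. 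That second branch is exactly the regime your proposal cannot handle; without an argument of this "small unexplored set is necessarily located near the top face" type (or some other way to control the ratio rather than the numerator alone), the proof is incomplete.
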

We will present the proof of this lemma in Section \ref{sec:lemmas}. 
Based on the above lemma, the next lemma shows that given ${\cal F}_n$, the process takes
at most geometric many steps to take an up step.    
Given ${\cal F}_n$, let  
\begin{align}
\label{def:SpecialSingleUpStep}
\nu_n := \inf\{ m \geq 1: \text{ the step }\langle g_{n+m-1}(\u), g_{n+m}(\u) \rangle
\text{ is an up step} \}.
\end{align}
The next lemma shows that not only the random variable $\nu_n$ is finite, 
it's tail decays exponentially.
\begin{lemma} 
\label{lem:SpecialSingleUpStep} 
Fix $n \in \N$. Given ${\cal F}_n$ there exist positive constants $C_0, C_1$ 
and a positive integer $c_{d}$ which depend only on the dimension $d$,  
 such that for all $l \geq 1$ we have
\begin{equation}
\label{eq:SpecialSingleUpStepExpDecay}
\P( \nu_n \geq c_{d} l \mid {\cal F}_n) \leq C_0 \exp{(-C_1 l)},
\end{equation} 
where $c^1_{d}$ as in Corollary \ref{cor:JtExplorationProcessS_n_Properties}.
\end{lemma}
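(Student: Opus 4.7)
The plan is to use Lemma \ref{lem:TopFace_LowRegionBound} together with a lower bound on the probability that the perturbation at the target lattice point $g^\uparrow_n(\u)$ is placed in a suitably small region, in order to exhibit a conditional event $F_n$ of uniformly positive probability on which an up step is forced to occur within some dimension-dependent number $c_d$ of further steps. Iterating this estimate over disjoint windows of length $c_d$, via the Markov property from Proposition \ref{prop:DSFMarkov}, then yields geometric decay for $\nu_n$.

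Concretely, fix $\delta' > 0$ sufficiently small relative to $\delta$ and, conditionally on $\mathcal{F}_n$, define $F_n$ as the intersection of the following three events: (i) for every $\w \in S^{\text{low}}_n$, $\w + U_\w \in R(\w) \cup \mathbb{H}^-(g_n(\u)(d))$; (ii) $g^\uparrow_n(\u) + U_{g^\uparrow_n(\u)} \in B^+(g^\uparrow_n(\u), \delta')$; and (iii) $\w + U_\w \in B^+(\w, \delta')$ for every $\w \in S^{\text{same}}_n$. The lattice points appearing in (i)--(iii) lie outside $\Gamma_n$ (the degenerate case $g^\uparrow_n(\u) \in \Gamma_n$ is handled by substituting a nearby element of $S^{\text{same}}_n \cup S^{\text{up}}_n$, available by Corollary \ref{cor:JtExplorationProcessS_n_Properties}(ii)), so their perturbations are independent and uniform on $[-1,1]^d$ given $\mathcal{F}_n$. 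Lemma \ref{lem:TopFace_LowRegionBound} combined with Corollary \ref{cor:JtExplorationProcessS_n_Properties}(i) bounds the conditional probability of (i) by at least $p_0^{c^1_d}$, while (ii) and (iii) are fixed-volume conditions. One concludes that $\P(F_n \mid \mathcal{F}_n) \geq q$ for some dimension-dependent constant $q > 0$.

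On the event $F_n$, any perturbation from $S^{\text{low}}_n$ lying in $\mathbb{H}^-(g_n(\u)(d))$ is not a candidate for $h$. Either the next DSF step lands in $B^+(g^\uparrow_n(\u), \delta)$, giving an up step, or it detours into some $R(\w)$, thereby consuming one element of $S^{\text{low}}_n$. In the latter case, the smallness of $\delta'$ together with (ii) and (iii) forces, by a direct $\|\cdot\|_1$-distance comparison, the subsequent step into $B^+(g^\uparrow_{n+1}(\u), \delta) = B^+(g^\uparrow_n(\u), \delta)$ (the target level is unchanged while the current $d$-coordinate has just jumped to within $\delta$ of it). Iterating through the at most $c^1_d$ possible detours, an up step occurs within $c_d := c^1_d + 1$ steps. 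If no up step occurs in $\{n+1, \ldots, n + c_d\}$, Proposition \ref{prop:DSFMarkov} lets us restart the same construction at the state at time $n + c_d$, yielding a fresh independent chance of conditional probability at least $q$; iterating this produces $\P(\nu_n > c_d l \mid \mathcal{F}_n) \leq (1 - q)^l$, which is (\ref{eq:SpecialSingleUpStepExpDecay}).

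The principal obstacle is the geometric $\|\cdot\|_1$-distance comparison used to argue that a detour into some $R(\w)$ is followed by an up step: one has to verify rigorously that the only unexplored perturbation above the new $d$-coordinate that can attract the DSF's next step is $g^\uparrow_n(\u) + U_{g^\uparrow_n(\u)}$, uniformly in the past history $\mathcal{F}_n$. This argument relies on the smallness of $\delta'$ relative to $\delta$, the cardinality bounds from Corollary \ref{cor:JtExplorationProcessS_n_Properties}, and Lemma \ref{lemma:UnexploredCone} to exclude competing unexplored perturbations outside the immediate lattice neighbourhood.
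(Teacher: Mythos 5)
Your overall scheme (exhibit an event of uniformly positive conditional probability on which an up step is forced within a bounded number of steps, then iterate using the Markov structure) is the same as the paper's, and your condition (i) is exactly the paper's event $A^1_n$, bounded below as you say via Lemma \ref{lem:TopFace_LowRegionBound} and Corollary \ref{cor:JtExplorationProcessS_n_Properties}. The genuine gap is in conditions (ii)--(iii) and the claim that the relevant perturbations ``are independent and uniform on $[-1,1]^d$ given $\mathcal{F}_n$''. This is false, and it is precisely the central difficulty of the model: given ${\cal F}_n$, the perturbation of an unexplored lattice point $\w$ is uniform on $B(\w)\setminus H_n$, not on the whole box, because ${\cal F}_n$ carries the information that the interior of the history region contains no point of $V$. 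The regions you target, $B^+(g^\uparrow_n(\u),\delta')$ and $B^+(\w,\delta')$ for $\w\in S^{\text{same}}_n$, sit just above the level $\lfloor g_n(\u)(d)\rfloor+1$ at transverse distance of order one from $g_n(\u)$; they need not lie in the unexplored cone of Lemma \ref{lemma:UnexploredCone}, and history triangles created by earlier long steps can cover them almost entirely (this is the obstruction illustrated in Figure \ref{fig:PLatticeDifficulty}; in the worst case $g^\uparrow_n(\u)$ may even belong to $\Gamma_n$). Consequently $\P(F_n\mid{\cal F}_n)$ is not bounded below uniformly in the history, and the geometric iteration collapses. The paper's proof is built to avoid exactly this: it never constrains the level-$(\ell+1)$ lattice points to place their own perturbations near their sites. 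Instead it works inside the cone $C(\x_1)$ above the projection $\x_1$ of $g_n(\u)$, which Lemma \ref{lemma:UnexploredCone} guarantees is unexplored, and it uses the level-$(\ell+2)$ points of $S^{\text{up}}_n$ (which for the marginal process are never in $\Gamma_n$) to realize the event $A^2_n$: a unique point of $V$ in $B^+(\x_1,\delta)$, a unique one in $B^+(\x_2,\delta)$ with $\x_2=g^\uparrow_n(\u)+e_d$, and nothing else in $B^+(\x_1,r)$. The forced up step is then the step from the point in $B^+(\x_1,\delta)$ to the one in $B^+(\x_2,\delta)$, i.e., one level higher than where you try to force it, and its probability can be bounded below uniformly because all the required placements happen in regions certified unexplored by the cone.

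There is also a secondary flaw in your forcing argument after a detour. The identity $B^+(g^\uparrow_{n+1}(\u),\delta)=B^+(g^\uparrow_n(\u),\delta)$ is not valid in general: a detour into $R(\w)$ can shift the path transversally, so the $\uparrow$-neighbour changes, and by Definition \ref{def:nUpStep} an up step is measured relative to the \emph{current} position's $\uparrow$-neighbour. That new neighbour may be explored (hence outside $S^{\text{same}}_n$, where $F_n$ imposes nothing), and even when it is controlled, landing in $B^+(\z,\delta')$ for a same-level lattice point $\z$ different from the current $\uparrow$-neighbour is not an up step; so the count $c_d=c^1_d+1$ and the claim that every non-detour step is an up step are not justified by (ii)--(iii) alone. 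Your fallback ``substitute a nearby element of $S^{\text{same}}_n\cup S^{\text{up}}_n$'' when $g^\uparrow_n(\u)\in\Gamma_n$ is the right instinct, but making it work (which perturbation goes where, and why it wins every $\|\cdot\|_1$ comparison uniformly over histories) is essentially the content of the paper's events $A^1_n\cap A^2_n$, together with the bound on how far the bounded-many detours can wander before the path is funnelled into $B^+(\x_1,\delta)$.
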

The next lemma shows that given ${\cal F}_n$, conditional on the event that the last 
step is an up step, the probability that the next step is also an up step is uniformly bounded 
away from zero.
\begin{lemma}
\label{lem:GivenUpStep_NextUpStepBound}
Given ${\cal F}_n$, there exists $p_1 > 0$ depending only on $\delta$ and
the dimension $d$ such that conditional to the $\sigma$-field ${\cal F}_n$ given the $n$-th (last)
step is an \textit{up step}, the probability that the $(n+1)$-th step is also an up step 
is uniformly bounded from below by $p_1$.
\end{lemma}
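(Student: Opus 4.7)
The plan is to exhibit, given $\mathcal{F}_n$ and the event that the $n$-th step is an up step, an explicit favourable event $F$ on the unused perturbations $\{U_\w : \w \in \Z^d \setminus \Gamma_n\}$ such that $F$ forces the $(n+1)$-th step to be an up step and $\P(F \mid \mathcal{F}_n) \geq p_1 > 0$ for some $p_1 = p_1(\delta,d)$.

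First, I would record the structural consequences of the $n$-th step being an up step. By Definition \ref{def:nUpStep}, $g_n(\u) \in B^+(g^\uparrow_{n-1}(\u),\delta)$ and $g_n(\v) \in B^+(g^\uparrow_{n-1}(\v),\delta)$, so both vertices lie in slim $\delta$-triangles just above integer height and $\lfloor g_n(\u)(d)\rfloor = \lfloor g_n(\v)(d)\rfloor$. Thus the movement algorithm will move both vertices at time $n+1$. Moreover, by the discussion following Definition \ref{def:nUpStep}, the freshly created portion of the explored region at step $n$ has height at most $(1/2+\delta)(d-1)$; combined with Lemma \ref{lemma:UnexploredCone}, this means that the target triangles $\Delta^\u_n$ and $\Delta^\v_n$ sit inside cones above $g_n(\u)$, $g_n(\v)$ and hence are disjoint from $H_n$.

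Next I would identify the finite (in terms of $d$) collection of lattice points whose perturbations can possibly affect $h(g_n(\u))$ or $h(g_n(\v))$. Since every perturbation lives in $[-1,1]^d$, only lattice points $\w$ with $\w(d) \in \{\lfloor g_n(\u)(d)\rfloor,\, \lfloor g_n(\u)(d)\rfloor+1\}$ at bounded $\|\cdot\|_\infty$-distance from $g_n(\u)$ or $g_n(\v)$ matter. Two structural facts are crucial: the target lattice points $g^\uparrow_n(\u)$ and $g^\uparrow_n(\v)$ both lie strictly above any level reached so far and hence are not in $\Gamma_n$; and among the same-level lattice points near $g_n(\u)$, only $g^\uparrow_{n-1}(\u)$ can be in $\Gamma_n$, but its perturbation equals $g_n(\u)$, which is not a competitor for $h(g_n(\u))$ since $h$ requires strictly larger $d$-coordinate (and analogously around $g_n(\v)$).

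Third, I would define $F$ as a product of independent restrictions on these relevant perturbations: (i) $U_{g^\uparrow_n(\u)}$ is forced into the sub-box ensuring $g^\uparrow_n(\u) + U_{g^\uparrow_n(\u)} \in B^+(g^\uparrow_n(\u),\delta/2) \subset \Delta^\u_n$, with the analogous condition for $g^\uparrow_n(\v)$; (ii) every other unexplored lattice point $\w$ at level $\lfloor g_n(\u)(d)\rfloor$ within bounded $\|\cdot\|_\infty$-distance of $g_n(\u)$ or $g_n(\v)$ is forced to satisfy $\w + U_\w \in \mathbb{H}^-(g_n(\u)(d))$, which immediately disqualifies it as a candidate; (iii) every other unexplored lattice point $\w$ at level $\lfloor g_n(\u)(d)\rfloor+1$ within bounded $\|\cdot\|_\infty$-distance has $U_\w$ restricted so that $\|\w + U_\w - g_n(\u)\|_1 > 1+2\delta$ and $\|\w + U_\w - g_n(\v)\|_1 > 1+2\delta$. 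Each sub-box has Lebesgue measure bounded below by a positive constant depending only on $\delta$ and $d$, and the conditioning that each $\w + U_\w$ avoid $H_n$ cuts off only a small portion because $H_n$ is thin after an up step. On $F$, the point $g^\uparrow_n(\u) + U_{g^\uparrow_n(\u)}$ has $\|\cdot\|_1$-distance from $g_n(\u)$ at most $1+\delta$, while every other perturbed point with $d$-coordinate above $g_n(\u)(d)$ is either disqualified by (ii) or at distance exceeding $1+2\delta$ by (iii); hence $h(g_n(\u)) \in \Delta^\u_n$, and symmetrically $h(g_n(\v)) \in \Delta^\v_n$, making the $(n+1)$-th step an up step. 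Since $F$ depends only on a bounded number of independent uniform perturbations, $\P(F \mid \mathcal{F}_n) \geq p_1 = p_1(\delta,d) > 0$. I expect the main technical obstacle to be the case where the paths $g_n(\u)$ and $g_n(\v)$ are close enough that the two relevant families of lattice points overlap; in that case one takes the common sub-event, which is still a product of restrictions on a uniformly bounded collection of independent uniforms and therefore retains a positive lower bound on its probability.
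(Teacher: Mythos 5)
Your overall strategy is the same as the paper's: exhibit an explicit favourable configuration on the finitely many relevant unused perturbations, using the unexplored cone and the smallness of the newly created history after an up step, and bound its conditional probability below by a constant depending only on $\delta$ and $d$. However, your construction has a genuine gap at its key step. You claim that the target lattice points $g^\uparrow_n(\u)$ and $g^\uparrow_n(\v)$ "lie strictly above any level reached so far and hence are not in $\Gamma_n$". This confuses the level of a lattice point with the level of its perturbed point: a lattice point at level $\lfloor g_n(\u)(d)\rfloor+1$ can have its perturbation displaced downward by almost $1$. In particular, after a mere up step (it is only a \emph{special} up step that pins down $\widehat{g_n}(\u)$), the current position $g_n(\u)\in B^+(g^\downarrow_n(\u),\delta)$ may itself be the perturbation of $g^\uparrow_n(\u)=g^\downarrow_n(\u)+e_d$, so that $g^\uparrow_n(\u)\in\Gamma_n$ and $U_{g^\uparrow_n(\u)}$ is already determined by ${\cal F}_n$ and points downward. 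In that case your condition (i) is an event of conditional probability zero, and your favourable event $F$ cannot be realized. This is precisely why the paper's proof works with $S^{\text{up}}_n$: the lattice points at level $\lfloor g_n(\u)(d)\rfloor+2$ are provably unexplored (Corollary \ref{cor:JtExplorationProcessS_n_Properties}), their boxes contain $\Delta^\u_n=B^+(g^\uparrow_n(\u),\delta)$, and the target perturbed point is planted using one of them inside the unexplored cone $C(g^\downarrow_n(\u)+\delta e_d)$ guaranteed by Lemma \ref{lemma:UnexploredCone}. Your argument can be repaired along these lines, but not as written.

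A second, related weakness is your treatment of the same-level competitors. You force every unexplored lattice point at level $\lfloor g_n(\u)(d)\rfloor$ near the paths into $\mathbb{H}^-(g_n(\u)(d))$ and justify a uniform lower bound on the probability by saying that "$H_n$ is thin after an up step". After a \emph{single} up step only the newly created part of the history is controlled; older explored triangles of height up to $m_d-4$ may still be present (the total height only collapses after consecutive up steps, cf.\ the discussion following Definition \ref{def:nUpStep}). More importantly, the conditional law of such a point may give arbitrarily little, or even zero, mass to $\mathbb{H}^-(g_n(\u)(d))$: its feasible region can be squeezed near the top of its box by the explored region, which is exactly the difficulty illustrated by Figure \ref{fig:PLatticeDifficulty}, and it is why Lemma \ref{lem:TopFace_LowRegionBound} only bounds the probability of $R(\w)\cup\mathbb{H}^-(g_n(\u)(d))$ and not of $\mathbb{H}^-(g_n(\u)(d))$ alone. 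So the probability of your event (ii) is not uniformly bounded below as stated; one must instead work with the alternative "close to the top face or below" and use the specific geometry available after an up step (namely that $g_n(\u)$ sits within $\delta$ of the lattice point $g^\downarrow_n(\u)$) to rule such points out as closer candidates than the planted target.
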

Now assuming the above three lemmas hold true, we first complete the proof of Proposition \ref{prop:BetaExpTail}. 

\noindent \textbf{Proof of Proposition \ref{prop:BetaExpTail}:} 
We first prove Proposition \ref{prop:BetaExpTail} for $j = 0$. We recall the definition of $m_d \in \N$ from Lemma \ref{lem:HistoryHeightBound}. 
Because of Lemma \ref{lem:TopFace_LowRegionBound} and Lemma \ref{lem:GivenUpStep_NextUpStepBound} 
we have that given ${\cal F}_n$, the minimum 
number of steps required to take $m_d$ many consecutive up steps 
has exponentially decaying tail. In order to complete the proof of Proposition \ref{prop:BetaExpTail}
we need to show that given ${\cal F}_n$, after $m_d - 1$ many up steps, the probability of taking a
`special' up step is uniformly bounded away from zero. 

To do that, consider the situation that 
the process takes $m_d - 2$ many consecutive up steps. We recall from 
Lemma \ref{lem:HistoryHeightBound} that 
$L(H_n) \leq m_d - 4$ a.s. This implies that after $m_d - 3$ many up steps the \textit{old} 
history region has to go away. Set $\x_1 = 
\widehat{g_{n + m_d - 2}}(\u), \x_2  = \x_1 + e_d$ and $\x_3 = \x_2 + e_d$.
For an illustration of this lemma see Figure \ref{fig:TauTail} where these points $\x_1, \x_2$
and $\x_3$ are marked as red dots. 
 
After $m_d - 3$ many consecutive 
up steps followed by another up step 
ensures that the history region $H_{n + (m_d -2)}$ is \textit{properly} contained in $B^+(\x_1, 2\delta)$. 

This ensures that the lattice points $\x_2,\x_3$ as well as the lattice points in $N(\x_3)$, defined as in (\ref{def:InftyNbd}) are unexplored.  We define an event 
\begin{align*}
A^3_n := \{& \w + U_\w \in B^+(\w, \delta)  \text{ for all }\w \in N(\x_3) \cup \{ \x_2\}
\text{ and one of the following happens: }\\
& \text{ either for all }\w \in \Z^d \setminus \Gamma_{n + m_d - 2}
\text{ with }\w(d) = \x_1(d) \text{ and }||\x_1 - \w ||_1 \leq 2 \\
& \text{ we have } \w  + U_\w \in B^-(\w + e_d, \delta);\\
& \text{ or for all }\w \in \Z^d \setminus \Gamma_{n + m_d - 2}
\text{ with }\w(d) = \x_2(d) \text{ and }||\x_2 - \w ||_1 \leq 2 \\
& \text{ we have } \w  + U_\w \in B^-(\w + e_d, \delta)
\}.
\end{align*}
For an illustration of the event $A^3_n$ refer to Figure \ref{fig:A^3_n}.
Note that $r = ||\x_1 - \x_2||_\infty + 2\delta$ gives the maximum distance possible between any two points in $B^+(\x_1, \delta)$ and $B^+(\x_2, \delta)$. Since $\delta > 0$ is chosen arbitrarily small, the event $A^3_n$ ensures that the perturbed point $\x_2 + U_{\x_2} $ is the closest point to $\x_1$ in $V$ and hence it must take an up step. Note that, after $m_d  - 2$ many up steps all the lattice points in $N(\x_2)$ may not be unexplored and hence we can not have a special up step there, whereas all the points in the set $N(\x_3)$  are unexplored. This is precisely the reason for taking $m_d \in \N$ so that $L(H_n) \leq m_d - 4$.   The same argument tells us that on the event $A^3_n$, the perturbed point in $B^+(\x_2, \delta)$ (which is unique) must take an up step to $B^+(\x_3, \delta)$. Further, distribution of the perturbed points for $\w \in N(\x_3)$ guarantees that this is a special up step.

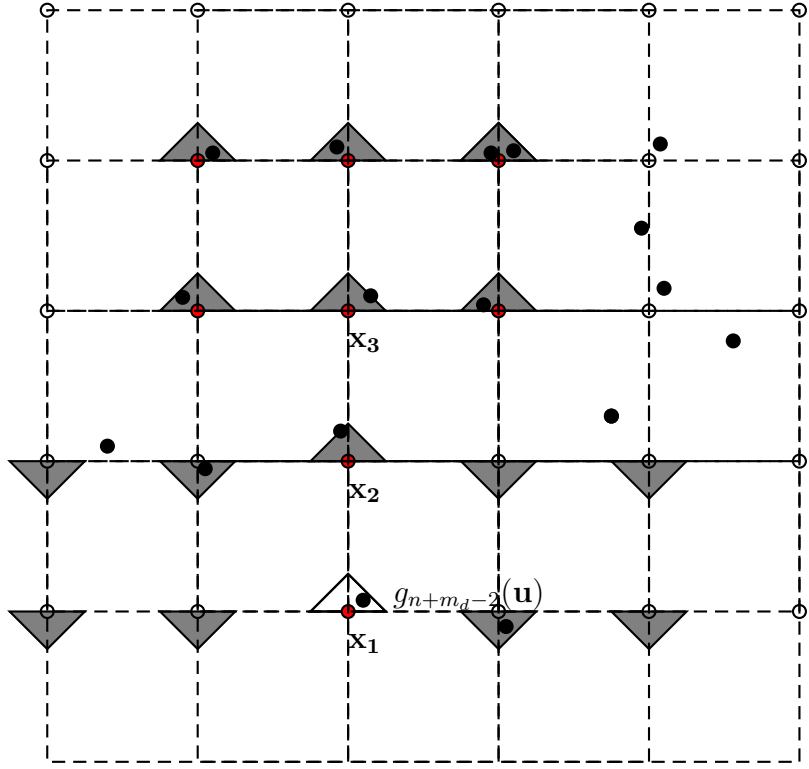
\begin{figure}[ht!]
\label{fig:A^3_n}
\begin{center}

\begin{pspicture}(0,-2)(10,8.5)

\pspolygon[fillcolor = white, fillstyle = solid](4,0.5)(4.5,0)(3.5,0)
\pspolygon[fillcolor = gray, fillstyle = solid](2,1.5)(2.5,2)(1.5,2)
\pspolygon[fillcolor = gray, fillstyle = solid](0,1.5)(0.5,2)(-0.5,2)
\pspolygon[fillcolor = gray, fillstyle = solid](8,1.5)(8.5,2)(7.5,2)
\pspolygon[fillcolor = gray, fillstyle = solid](6,1.5)(6.5,2)(5.5,2)
\pspolygon[fillcolor = gray, fillstyle = solid](2,-0.5)(2.5,0)(1.5,0)
\pspolygon[fillcolor = gray, fillstyle = solid](0,-0.5)(0.5,0)(-0.5,0)
\pspolygon[fillcolor = gray, fillstyle = solid](8,-0.5)(8.5,0)(7.5,0)
\pspolygon[fillcolor = gray, fillstyle = solid](6,-0.5)(6.5,0)(5.5,0)

\pspolygon[fillcolor = gray, fillstyle = solid](4,4.5)(4.5,4)(3.5,4)
\pspolygon[fillcolor = gray, fillstyle = solid](2,4.5)(2.5,4)(1.5,4)
\pspolygon[fillcolor = gray, fillstyle = solid](6,4.5)(6.5,4)(5.5,4)
\pspolygon[fillcolor = gray, fillstyle = solid](4,6.5)(4.5,6)(3.5,6)
\pspolygon[fillcolor = gray, fillstyle = solid](2,6.5)(2.5,6)(1.5,6)
\pspolygon[fillcolor = gray, fillstyle = solid](6,6.5)(6.5,6)(5.5,6)
\pspolygon[fillcolor = gray, fillstyle = solid](6,6.5)(6.5,6)(5.5,6)
\pspolygon[fillcolor = gray, fillstyle = solid](4,2.5)(4.5,2)(3.5,2)
\pspolygon[fillcolor = gray, fillstyle = none](4,0.5)(4.5,0)(3.5,0)

\pscircle[fillcolor=black,fillstyle=none](0,0){.1}
\pscircle[fillcolor=black,fillstyle=none](2,0){.1}
\pscircle[fillcolor=red,fillstyle=solid](4,0){.1}
\pscircle[fillcolor=black,fillstyle=none](6,0){.1}
\pscircle[fillcolor=black,fillstyle=none](8,0){.1}
\pscircle[fillcolor=black,fillstyle=none](10,0){.1}
\pscircle[fillcolor=black,fillstyle=none](0,2){.1}
\pscircle[fillcolor=black,fillstyle=none](2,2){.1}
\pscircle[fillcolor=red,fillstyle=solid](4,2){.1}
\pscircle[fillcolor=black,fillstyle=none](6,2){.1}
\pscircle[fillcolor=black,fillstyle=none](8,2){.1}
\pscircle[fillcolor=black,fillstyle=none](10,2){.1}
\pscircle[fillcolor=black,fillstyle=none](0,4){.1}
\pscircle[fillcolor=red,fillstyle=solid](2,4){.1}
\pscircle[fillcolor=red,fillstyle=solid](4,4){.1}
\pscircle[fillcolor=red,fillstyle=solid](6,4){.1}
\pscircle[fillcolor=red,fillstyle=none](8,4){.1}
\pscircle[fillcolor=red,fillstyle=none](10,4){.1}
\pscircle[fillcolor=black,fillstyle=none](0,6){.1}
\pscircle[fillcolor=red,fillstyle=solid](2,6){.1}
\pscircle[fillcolor=red,fillstyle=solid](4,6){.1}
\pscircle[fillcolor=red,fillstyle=solid](6,6){.1}
\pscircle[fillcolor=red,fillstyle=none](8,6){.1}
\pscircle[fillcolor=red,fillstyle=none](10,6){.1}
\pscircle[fillcolor=black,fillstyle=none](0,8){.1}
\pscircle[fillcolor=black,fillstyle=none](2,8){.1}
\pscircle[fillcolor=black,fillstyle=none](4,8){.1}
\pscircle[fillcolor=black,fillstyle=none](6,8){.1}
\pscircle[fillcolor=black,fillstyle=none](8,8){.1}
\pscircle[fillcolor=black,fillstyle=none](10,8){.1}

\pspolygon[linestyle=dashed](4,2)(4,-2)(0,-2)(0,2)
\pspolygon[linestyle=dashed](6,2)(6,-2)(2,-2)(2,2)
\pspolygon[linestyle=dashed](8,2)(8,-2)(4,-2)(4,2)
\pspolygon[linestyle=dashed](10,2)(10,-2)(6,-2)(6,2)

\pspolygon[linestyle=dashed](4,4)(4,0)(0,0)(0,4)
\pspolygon[linestyle=dashed](6,4)(6,0)(2,0)(2,4)
\pspolygon[linestyle=dashed](8,4)(8,0)(4,0)(4,4)
\pspolygon[linestyle=dashed](10,4)(10,0)(6,0)(6,4)

\pspolygon[linestyle=dashed](4,6)(4,2)(0,2)(0,6)
\pspolygon[linestyle=dashed](6,6)(6,2)(2,2)(2,6)
\pspolygon[linestyle=dashed](8,6)(8,2)(4,2)(4,6)
\pspolygon[linestyle=dashed](10,6)(10,2)(6,2)(6,6)

\pspolygon[linestyle=dashed](4,8)(4,4)(0,4)(0,8)
\pspolygon[linestyle=dashed](6,8)(6,4)(2,4)(2,8)
\pspolygon[linestyle=dashed](8,8)(8,4)(4,4)(4,8)
\pspolygon[linestyle=dashed](10,8)(10,4)(6,4)(6,8)

\pscircle[fillcolor=black,fillstyle=solid](2.1,1.9){.1}
\pscircle[fillcolor=black,fillstyle=solid](7.5,2.6){.1}

\pscircle[fillcolor=black,fillstyle=solid](3.9, 2.4){.1}

\pscircle[fillcolor=black,fillstyle=solid](4.2,0.15){.1}


\pscircle[fillcolor=black,fillstyle=solid](4.3,4.2){.1}
\pscircle[fillcolor=black,fillstyle=solid](8.2,4.3){.1}

\pscircle[fillcolor=black,fillstyle=solid](6.1, -0.2){.1}
\pscircle[fillcolor=black,fillstyle=solid](7.5,2.6){.1}
\pscircle[fillcolor=black,fillstyle=solid](0.8,2.2){.1}
\pscircle[fillcolor=black,fillstyle=solid](1.8,4.18){.1}
\pscircle[fillcolor=black,fillstyle=solid](2.2,6.1){.1}
\pscircle[fillcolor=black,fillstyle=solid](3.85,6.18){.1}
\pscircle[fillcolor=black,fillstyle=solid](6.2,6.13){.1}
\pscircle[fillcolor=black,fillstyle=solid](5.9,6.1){.1}
\pscircle[fillcolor=black,fillstyle=solid](5.8,4.08){.1}
\pscircle[fillcolor=black,fillstyle=solid](8.15,6.22){.1}
\pscircle[fillcolor=black,fillstyle=solid](7.9,5.1){.1}
\pscircle[fillcolor=black,fillstyle=solid](9.12,3.6){.1}

\rput[tl](4,-.3){$\mathbf{x_1}$}
\rput[tl](4.6,.45){$g_{n + m_d - 2} (\mathbf{u})$}
\rput[tl](4,1.7){$\mathbf{x_2}$}
\rput[tl](4,3.7){$\mathbf{x_3}$}



\end{pspicture}
\end{center}
\caption{For $d=2$, this figure represents the event $A^3_n$. The lattice points $\mathbf{x}_1, \mathbf{x}_2,\mathbf{x}_3$ and the points in $N(\mathbf{x}_3)$ are represented as red dots. 
Corresponding to each
of the gray shaded regions, if the corresponding lattice point is unexplored (some of them might be explored as well) 
then the associated perturbed point belongs to the gray region only. This forces the point $g_{n + m_d -2}(\mathbf{u})$ to take an up step followed by a special up step.}
\end{figure}

Since all the lattice points in  $N( \x_3)$ remain unexplored and because, after $m_d - 2$ many consecutive up steps, 
we have $H_{n + m_d -2} \subset 
B^+(\x_1, 2\delta)$, 
it follows that $\P(A^3_n \mid \text{ last }(m_d - 2)\text{ many up steps })$ is uniformly bounded from below. 
 This completes the proof for $j  = 0$. 

In order to prove Proposition \ref{prop:BetaExpTail} for $j = 1$ we observe that 
given ${\cal G}_{\tau_1}$, we can restart the process from the lattice point
$\widehat{g_{\tau_1}}(\u)$ and it is guaranteed that $h(\widehat{g_{\tau_1}}(\u)) = g_{\tau_1}(\u)$.
But restarting the process from $\widehat{g_{\tau_1}}(\u)$ is different from our initial condition as 
we have the information that $\w + U_\w \in B^+(\w, \delta)$ for all $\w \in N(\widehat{g_{\tau_1}}(\u))$.
It follows that in geometric many steps the restarted process reaches the upper half-plane $\mathbb{H}^+(\lfloor g_{\tau_1}(\u)(d)\rfloor + 2)$ and for any lattice point $\w$ in this upper half-plane 
we don't have any information about the distribution of the associated perturbed point. 
Hence thereafter we can proceed with the same argument and this completes the proof for $j = 1$. 
The proof for general $j \geq 1$ is exactly the same.  
\qed

\subsection{Proofs of the lemmas in Section \ref{subsec:ProofTauTail}}
\label{sec:lemmas}

In this section we prove Lemma \ref{lem:TopFace_LowRegionBound}, Lemma \ref{lem:SpecialSingleUpStep} and Lemma \ref{lem:GivenUpStep_NextUpStepBound} which were used to prove Proposition
\ref{prop:BetaExpTail}. 

\noindent \textbf{Proof of Lemma \ref{lem:TopFace_LowRegionBound}:} 
Fix $\w \in S^{\text{low}}_n$ and we claim that:
\begin{claim}
\label{claim:LebesgueMeasureRatio}
 In order to prove Lemma \ref{lem:TopFace_LowRegionBound},
 it suffices to show that 
\begin{equation}
\label{eq:LebesgueMeasureRatio}
\frac{\ell( R(\w) \setminus H_n)}{\ell \bigl( \bigl( B(\w)
\cap \mathbb{H}^+(g_n(\u)(d)) \bigr) \setminus H_n \bigr)}
\geq \hat{c},
\end{equation}
where for any Borel $A \subset \R^d$, the number $\ell(A)$ 
denotes the Lebesgue measure of $A$.
\end{claim}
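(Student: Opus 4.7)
The plan is to reduce the desired lower bound on the conditional probability to a statement about Lebesgue measures on subsets of the box $B(\w)$, thereby justifying the claim.

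First I would identify the conditional distribution of $\w + U_\w$ given ${\cal F}_n$. Since $\w \in S^{\text{low}}_n$ implies $\w \in \Z^d \setminus \Gamma_n$, the perturbation $U_\w$ has not been used by the first $n$ steps. Combining the independence of the family $\{U_{\w'} : \w' \in \Z^d \setminus \Gamma_n\}$ with the only non-trivial ${\cal F}_n$-information (namely that no perturbed point lies in the interior of $H_n$), the conditional distribution of $\w + U_\w$ is uniform on $B(\w) \setminus H_n$ (the constraint $\w' + U_{\w'} \notin \mathrm{int}(H_n)$ decouples across different $\w'$). Consequently, for any Borel $A \subset \R^d$ one has
\begin{equation*}
\P(\w + U_\w \in A \mid {\cal F}_n) = \frac{\ell((A \cap B(\w)) \setminus H_n)}{\ell(B(\w) \setminus H_n)}.
\end{equation*}

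Next I would split the box along the hyperplane $\{\x(d) = g_n(\u)(d)\}$. Writing
\begin{equation*}
a := \ell\bigl((B(\w) \cap \mathbb{H}^+(g_n(\u)(d))) \setminus H_n\bigr), \quad b := \ell\bigl(B(\w) \cap \mathbb{H}^-(g_n(\u)(d))\bigr), \quad r := \ell(R(\w) \setminus H_n),
\end{equation*}
I would use the fact that $H_n \subset \mathbb{H}^+(r_n) \subset \mathbb{H}^+(g_n(\u)(d))$, so that the lower half of $B(\w)$ contributes fully to the allowed region. Since $\mathbb{H}^-(g_n(\u)(d)) \cap B(\w)$ lies entirely in the event of interest and is disjoint from $H_n$, the target probability bounds from below as
\begin{equation*}
\P(\w + U_\w \in R(\w) \cup \mathbb{H}^-(g_n(\u)(d)) \mid {\cal F}_n) \geq \frac{r + b}{a + b}.
\end{equation*}
Here I would remark on the geometric bounds $b \geq 2^{d-1}$ (the lower slab has height at least $1$, since $\w(d) = \lfloor g_n(\u)(d) \rfloor \leq g_n(\u)(d)$) and $a \leq 2^{d-1}$; these are only used to confirm that both $a$ and $b$ are finite and non-degenerate, so the ratio $(r+b)/(a+b)$ is well defined.

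Finally, given the hypothesis $r/a \geq \hat{c}$, I would finish with the elementary inequality that for any $a, b \geq 0$ with $a+b > 0$,
\begin{equation*}
\frac{\hat{c}\, a + b}{a + b} \geq \min(\hat{c}, 1),
\end{equation*}
which is immediate by splitting into the cases $\hat{c} \leq 1$ (where $b \geq \hat{c} b$) and $\hat{c} > 1$ (where $\hat{c} a \geq a$). Substituting $r \geq \hat{c} a$ into the probability bound above yields $p_0 := \min(\hat{c}, 1) > 0$, completing the reduction. The most delicate step is the first one, the careful justification of the conditional distribution of $\w + U_\w$: it hinges on the observation that the constraint imposed by ${\cal F}_n$ on the unused perturbations is a product constraint (one factor per $\w' \notin \Gamma_n$), which keeps them conditionally independent and uniformly distributed on their admissible regions; after that, the remainder is essentially a one-line measure-theoretic manipulation.
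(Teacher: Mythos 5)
Your proposal follows essentially the same route as the paper: identify the conditional law of $\w + U_\w$ given ${\cal F}_n$ as uniform on the unexplored part of $B(\w)$, split $B(\w)$ along the hyperplane $\{\x(d) = g_n(\u)(d)\}$, observe that the lower portion lands automatically in the target event, and finish with the convex-combination inequality $(\hat{c}a+b)/(a+b)\geq \min(\hat{c},1)$. That structure is correct and, if anything, your last step is written more cleanly than the paper's chain (which drops the lower-half term and states the remaining term as an equality with the measure ratio).

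One intermediate assertion, however, is not literally correct and deserves a flag: the conditional law of $\w+U_\w$ given ${\cal F}_n$ is uniform on $B(\w)$ minus the \emph{full} explored region (the union of all half-balls $B^+(g_{j}(\u), \|g_j(\u)-g_{j+1}(\u)\|_1)$ accumulated so far), not on $B(\w)\setminus H_n$. By construction $H_n$ records only the part of that explored region lying in $\mathbb{H}^+(r_n)$ with $r_n=g_n(\u)(d)$; the earlier half-balls extend below this level and, for $\w\in S^{\text{low}}_n$ (whose box reaches down to height $\w(d)-1$), they can intersect $B(\w)\cap\mathbb{H}^-(g_n(\u)(d))$. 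So "the lower half of $B(\w)$ contributes fully to the allowed region" is false in general, and your intermediate bound $\P(\cdot\mid{\cal F}_n)\geq (r+b)/(a+b)$ with $b=\ell(B(\w)\cap\mathbb{H}^-(g_n(\u)(d)))$ is not a valid derivation as stated. The repair is immediate and does not change your conclusion: replace $b$ by $y:=\ell\bigl((B(\w)\cap\mathbb{H}^-(g_n(\u)(d)))\setminus E_n\bigr)$, where $E_n$ is the full explored region (so $E_n\cap\mathbb{H}^+(r_n)=H_n$); this quantity enters the numerator and the denominator identically, and your elementary inequality $(\hat{c}a+y)/(a+y)\geq\min(\hat{c},1)$ holds for every $y\geq 0$, so the reduction to (\ref{eq:LebesgueMeasureRatio}) still goes through (a similar harmless caveat applies to the sliver of $R(\w)$ that may dip below the hyperplane when $g_n(\u)(d)>\w(d)+1-\delta$, which only helps since it lies in the target set). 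With that correction your argument is sound and in substance identical to the paper's.
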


Equation (\ref{eq:LebesgueMeasureRatio}) suggests that
 there exists a positive constant $\hat{c}$ depending on $\delta$ and $d$
such that the ratio of the Lebesgue measure of the unexplored part in $B(\w)\cap \mathbb{H}^+(g_n(\u)(d))$ and the Lebesgue measure of the unexplored part in $R(\w)$
is uniformly bounded from below by $\hat{c}$. Note that 
the perturbed point $\w + U_\w$ belongs inside the box $B(\w) = \w + [-1, +1]^d$
 either in the upper half-plane 
$\mathbb{H}^+(g_n(\u)(d))$ or in the lower half-plane $\mathbb{H}^-(g_n(\u)(d))$.
Define the event $B = \{\w + U_\w \in \mathbb{H}^-(g_n(\u)(d))\}$. 
We obtain 
\begin{align*}
& \P(\w + U_\w \in (R(\w) \cup \mathbb{H}^-(g_n(\u)(d)))\mid {\cal F}_n)\\
= & \P(\w + U_\w \in (R(\w) \cup \mathbb{H}^-(g_n(\u)(d))) \cap B \mid {\cal F}_n) + 
\P(\w + U_\w \in (R(\w) \cup \mathbb{H}^-(g_n(\u)(d))) \cap B^c \mid {\cal F}_n)\\
\geq & \P(\w + U_\w \in (R(\w) \cup \mathbb{H}^-(g_n(\u)(d))) \cap B^c \mid {\cal F}_n)\\
= & \frac{\ell(R(\w) \setminus H_n)}{\ell \bigl( B(\w)\cap \mathbb{H}^+(g_n(\u)(d))) \setminus H_n \bigr )} \geq \hat{c}.
\end{align*} 
The last inequality follows from (\ref{eq:LebesgueMeasureRatio})
and the penultimate equality follows from the fact that the perturbed point $\w + U_\w$ is uniformly distributed over the unexplored  part in $B(\w)$. 
This justifies our claim \ref{claim:LebesgueMeasureRatio}.

\begin{figure}[ht!]
\label{fig:TopFace}
\begin{center}

\begin{pspicture}(-2, -.5)(12,9)
\pspolygon[fillcolor=gray,fillstyle=solid](2.9,4)(3.2,4)(3.2, 3.7)
\pspolygon[fillcolor=gray,fillstyle=solid](3.2,4)(3.5,4)(3.2, 3.7)
\pspolygon[fillcolor=gray,fillstyle=solid](10.9,4)(11.2,4)(11.2, 3.7)
\pspolygon[fillcolor=gray,fillstyle=solid](11.2,4)(11.5,4)(11.2, 3.7)
\pscircle[fillcolor=red,fillstyle=solid](3.2,4){.07}
\pscircle[fillcolor=red,fillstyle=solid](11.2,4){.07}

\pspolygon[fillcolor=lightgray,fillstyle=solid](5.4,1.5)(-2.4,1.5)(1.5, 5.4)
\pspolygon[fillcolor=lightgray,fillstyle=solid](2.9,3.5)(4.5, 3.5)(3.7, 4.3)

\pscircle[fillcolor=black,fillstyle=none](2,2){.07}
\pscircle[fillcolor=black,fillstyle=none](2,0){.07}
\pscircle[fillcolor=black,fillstyle=none](2,4){.07}
\pscircle[fillcolor=black,fillstyle=none](0,2){.07}
\pscircle[fillcolor=black,fillstyle=none](0,4){.07}
\pscircle[fillcolor=black,fillstyle=none](0,0){.07}
\pscircle[fillcolor=black,fillstyle=none](4,2){.07}
\pscircle[fillcolor=black,fillstyle=none](4,4){.07}
\pscircle[fillcolor=black,fillstyle=none](4,0){.07}

\psline[linecolor = green](-0.5,3.74)(4.7,3.74)

\pspolygon[linestyle=dashed](4,4)(4,0)(0,0)(0,4)
\pspolygon[fillcolor=black,fillstyle=none](4,4)(4,3.7)(0,3.7)(0,4)

\pspolygon[fillcolor=lightgray,fillstyle=solid](12,1.5)(6,1.5)(9, 4.5)

\pscircle[fillcolor=black,fillstyle=none](10,2){.07}
\pscircle[fillcolor=black,fillstyle=none](10,0){.07}
\pscircle[fillcolor=black,fillstyle=none](10,4){.07}
\pscircle[fillcolor=black,fillstyle=none](8,2){.07}
\pscircle[fillcolor=black,fillstyle=none](8,4){.07}
\pscircle[fillcolor=black,fillstyle=none](8,0){.07}
\pscircle[fillcolor=black,fillstyle=none](12,2){.07}
\pscircle[fillcolor=black,fillstyle=none](12,4){.07}
\pscircle[fillcolor=black,fillstyle=none](12,0){.07}

\pspolygon[linestyle=dashed](12,4)(12,0)(8,0)(8,4)
\pspolygon[fillcolor=black,fillstyle=none](12,4)(12,3.7)(8,3.7)(8,4)

\psline{->}(3.8,3.8)(6,3.8)
\psline{<-}(7.3,3.8)(10.2,3.8)

\rput[tl](2,-.7){Case (ii)}
\rput[tl](10,-.7){Case (i)}
\rput[tl](2,1.8){$\mathbf{w}$}
\rput[tl](10,1.8){$\mathbf{w}$}
\rput[tl](4.9,3.7){$y = g_n(\u)(2)$}
\rput[tl](2,.9){$B(\mathbf{w})$}
\rput[tl](10,.9){$B(\mathbf{w})$}
\rput[tl](6,4.2){$R(\mathbf{w})$}
\rput[tl](3.2,4.5){$\mathbf{x}$}
\rput[tl](11.2,4.5){$\mathbf{x}$}

\end{pspicture}

\end{center}
\caption{Case (ii) and case (i) illustrated for $d=2$. The top rectangles represent the region $R(\w)$. The lightgray regions represent the history sets. A choice of $\x \in B(\w)$ with $\x(2) = \w(2) + 1$
is indicated in the figure as red dots. The corresponding left triangle $\Delta^\x_l$ and $\Delta^\x_r$
are also mentioned as gray shaded regions. In the first picture (representing case (ii)) there is no such $\x$ with an unexplored left or right triangle whereas in the second picture for the given choice 
of $\x$, both the triangles are unexplored. In the first picture, the green line represents the line $y = g_n(\u)(2)$.}

\end{figure}
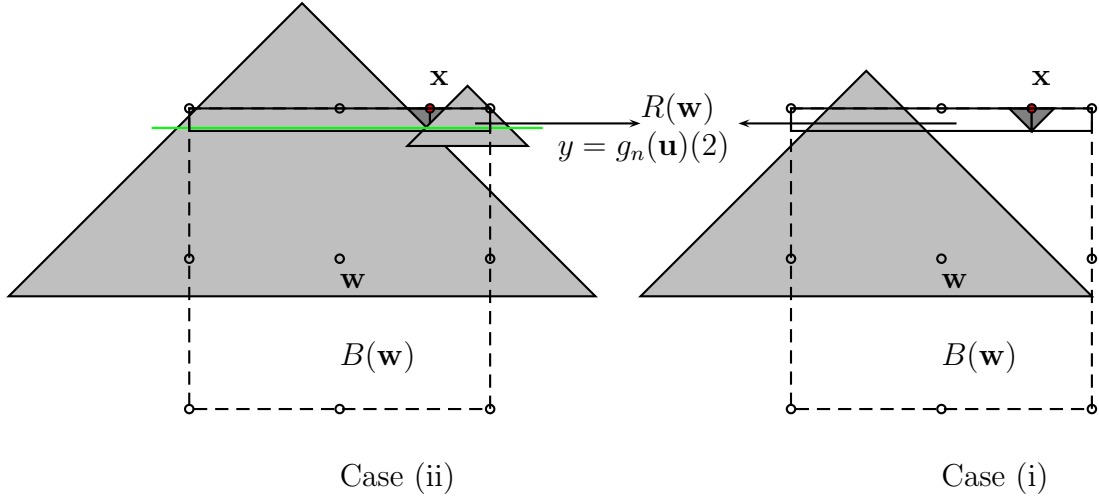

The proof of (\ref{eq:LebesgueMeasureRatio}) is based on a geometric argument.  
In order to keep the notations simple we present the proof for $d=2$. 
The same argument applies for $d=3$ as well. 
For any $\x \in B(\w)$ with $\x(2) = \lfloor g_n(\u)(2)\rfloor + 1$, 
we consider the $||\quad ||_1$ triangle 
$B^-(\x, \delta) = B(\x, \delta) \cap \mathbb{H}^-( \x(2) )$. 
Clearly $B^-(\x, \delta)$ is union of the left triangle $\Delta^\x_l := 
B^-(\x, \delta) \cap \{\y \in \R^2 : \y(1) \leq \x(1)\}$ and the right triangle 
$\Delta^\x_r := B^-(\x, \delta) \cap \{\y \in \R^2 : \y(1) \geq \x(1)\}$ (See Figure \ref{fig:TopFace}).  
We first consider the situation when there exists at least one $\x \in B(\w)$ with 
$\x(2) = \lfloor g_n(\u)(2)\rfloor + 1$ such that either the left triangle 
or the right triangle $\Delta^\x_2$ remain unexplored, i.e., either 
 $\Delta^\x_l \cap H_n = \emptyset$  or $\Delta^\x_r \cap H_n = \emptyset$ (See Figure \ref{fig:TopFace} case (i)).  Assuming that the left triangle $\Delta^\x_l$ is unexplored, 
we must have  
\begin{equation*}
\frac{\ell( R(\w) \setminus H_n)}{\ell \bigl( \bigl( B(\w)
\cap \mathbb{H}^+(g_n(\u)(2)) \bigr) \setminus H_n \bigr)}
\geq \frac{\ell(\Delta^\x_l)}{\ell(B(\w))} = \delta^2/8.
\end{equation*}
When the right triangle $\Delta^\x_r$ remains unexplored, 
the argument is the same and this proves (\ref{eq:LebesgueMeasureRatio}) for this situation, i.e., case (i).

Next in case (ii) we consider the situation that for all $\x \in B(\w) \setminus H_n$ 
with $\x(2) = \w(2) + 1$, we must have both the left triangle \text{and} the right triangle partially explored, i.e.
(see case (ii) in Figure \ref{fig:TopFace}). For more detail illustration of case (ii) see Figure \ref{fig:TopFace_caseII}. 

Since $H_n$ is of the form $\cup_{i=1}^k B^+(\x_i, r_i) 
\cap \mathbb{H}^+(g_n(\u)(2))$
for some finite $k \geq 1$ and $r_i \geq 0, \x_i \in \mathbb{H}^-(g_n(\u)(2))$ 
for all $1 \leq i \leq k$, it follows that the history triangles intersecting with $\Delta^\x_l$ and $\Delta^\x_r$
intersect before crossing the line $y = \w(2) + (1 - \delta)$. Since this is true for all $\x \in B(\w) \setminus H_n$ with $\w(d) = \lfloor g_n(\u)(d) \rfloor + 1$, we may deduce that the unexplored part in 
$B(\w) \in \mathbb{H}^+(g_n(\u)(2))$ is actually contained in $R(\w)$. This ensures that  
\begin{equation*}
\frac{\ell( R(\w) \setminus H_n)}{\ell \bigl( \bigl( B(\w)
\cap \mathbb{H}^+(g_n(\u)(2)) \bigr) \setminus H_n \bigr)}
= 1.  
\end{equation*}
This completes the proof of (\ref{eq:LebesgueMeasureRatio}) and thereby proves
Lemma \ref{lem:TopFace_LowRegionBound}.
 \qed
 
\begin{figure}[ht!]
\label{fig:TopFace_caseII}
\begin{center}

\begin{pspicture}(0,0)(8,8)

\pspolygon[fillcolor = lightgray, fillstyle = solid](4.15,5.4)(2.35,5.4)(3.25,6.3)
\pspolygon[fillcolor = lightgray, fillstyle = solid](4.15,5.4)(8.15,5.4)(6.15,7.4)
\pspolygon[fillcolor = lightgray, fillstyle = solid](2.35,5.4)(-0.65,5.4)(0.85,6.9)

\pspolygon[linestyle= dashed, fillcolor = gray, fillstyle = none](0,0)(6,0)(6,6)(0,6)
\pspolygon[fillcolor = gray, fillstyle = none](0,5.2)(6,5.2)(6,6)(0,6)
\pspolygon(3.2,6)(4.8,6)(4,5.2)
\psline(4,6)(4,5.2)
\pscircle[fillcolor = black, fillstyle = solid](2.35,5.4){.1}
\pscircle[fillcolor = black, fillstyle = none](3,3){.1}

\psline[linecolor = green](0,5.4)(7,5.4)

\rput[tl](4,6.5){$\mathbf{x}$}
\rput[tl](3.5,3){$\mathbf{w}$}
\rput[tl](2.35,5.3){$g_n(\mathbf{u})$}
\end{pspicture}
\end{center}
\caption{For $d=2$, the situation in case (ii) is described in more detail here. The shaded regions represent the history region with $g_n(\u)$ represented as a black dot. The lattice point $\w$ belongs to $S^\text{low}_n$ and the square represents $B(\w)$.}

\end{figure}
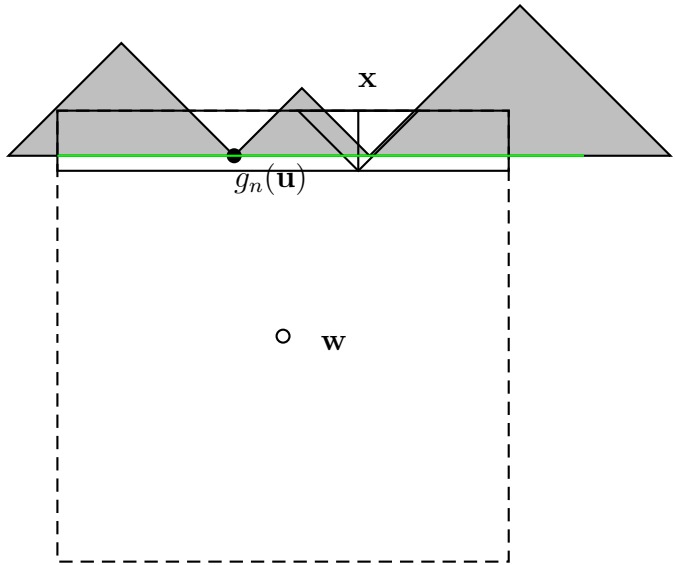

Next we prove Lemma  \ref{lem:SpecialSingleUpStep}.

 \noindent \textbf{Proof of Lemma \ref{lem:SpecialSingleUpStep}:} 
Given ${\cal F}_n$ we first define the event $A^1_n$ which 
ensures that for each $\w$ in $S^{\text{low}}_n$, the unexplored perturbed point
 $\w + U_\w$ belongs either close to their respective top faces
or in the lower half-plane $\mathbb{H}^-(g_n(\u)(d))$. Formally, 
\begin{align}
\label{def:EventA1n}
A^1_n := \{ \w + U_\w \in R(\w)\cup\mathbb{H}^-(g_n(\u)(d)) \text{ for all }\w \in S^{\text{low}}_n \} .
\end{align}
In other words, event $A^1_n$ says that for any $\w \in S^{\text{low}}_n$, the associated perturbed point $\w + U_\w$
must belong close to the top face of $B(\w)$ or in the lower half-plane $\mathbb{H}^-(g_n(\u)(d))$ (and thereby does not affect the next step starting from $g_n(\u)$).
Because of Lemma \ref{lem:TopFace_LowRegionBound} together with Corollary \ref{cor:JtExplorationProcessS_n_Properties},
 we have $\P(A^1_n | {\cal F}_n) \geq (p_0)^{c^1_d}$.
 
For $\x \in \R^d$ let $\overline{\x} := (\x(1), \cdots, \x(d-1))\in \R^{d-1}$ denote 
the projection of $\x$ in the first $d-1$ many co-ordinates. 
Set $\x_1 := (\overline{g_n(\u)}, \lfloor g_n(\u)(d) \rfloor + 1)$ as the projection of 
the point $g_n(\u)$ on the hyperplane $\{\y \in \R^d : \y(d) = \lfloor g_n(\u)(d) \rfloor + 1\}$
and let $\x_2$ denote the lattice point $\x_2 := g^\uparrow_n(\u) + e_d$. 
Because of Lemma \ref{lemma:UnexploredCone}, 
we have that the $\pi/2$ cone $C(\x_1)$ centred  at $\x_1$ does not intersect with $H_n$ and hence unexplored. Take $r := ||\x_1 - \x_2||_1 + 2\delta $ and consider the $||\quad ||_1$ triangle 
$B^+(\x_1, r)$. Now we define the event $A^2_n$ where 
\begin{align}
\label{def:EventA2n}
A^2_n := \{ & \#  ((B^+(\x_2, \delta) \setminus H_n)\cap V) = \# (B^+(\x_1, \delta) \cap V) = 1, 
\nonumber \\
 & \bigl ( B^+(\x_1, r) \setminus (B^+(\x_2, \delta)\cup B^+(\x_1, \delta)\cup H_n) \bigr) \cap V 
= \emptyset \}.
\end{align}
See Figure \ref{fig:TauTail} describing the events $A^1_n$ and $A^2_n$. Note that $r$ is the maximum possible distance  between any two points in $B^+(\x_1, \delta)$ and $B^+(\x_2, \delta)$ and from the choice of $r$ it follows that $B^+(\x_2, \delta) \subset B^+(\x_1, r)$. 
Hence the event $A^2_n$ ensures that the perturbed point in $B^+(\x_1, \delta)$ connects to the perturbed point in 
$B^+(\x_2, \delta)$ as it is the closest one. 

Now let us try to show that on the event $A^1_n \cap A^2_n$ the DSF path must take 
an up step after bounded many steps. For $\x \in \R^d$ let $\overline{\x} := (\x(1), \cdots, \x(d-1)) \in \R^{d-1}$ denote the projection of $\x$ on the first $d-1$ coordinates. 
Because of the presence of the perturbed point in $B^+(\x_1, \delta)$, 
the DSF path must connect to it unless there is a closer point in the lower half-plane $\mathbb{H}^-(\lfloor g_n(\u)(d) \rfloor + 1)$. 
 By Corollary \ref{cor:JtExplorationProcessS_n_Properties},  
there can be at most $c^1_d$ many such closer points in $\mathbb{H}^-(\lfloor g_n(\u)(d) \rfloor + 1)$. Coupled with the presence of a perturbed point in $B^+(\x_1, \delta)$, the fact that we are on the event $A^1_n$ and the fact that $R(\w)$ has height $\delta$, the statement in the last line ensure that 
the DSF path starting from $g_n(\u)$ will never visit a perturbed point $\y$ in  $\mathbb{H}^-(\lfloor g_n(\u)(d) \rfloor + 1)$ with $||\overline{\y} - \overline{g_n(\u)}||_1 > M_d \delta$ for some constant $M_d>0$.
By choosing $\delta$ sufficiently small, we can always ensure that the DSF path starting from $g_n(\u)$ after taking random number of steps (bounded by $c^1_d$) in $\mathbb{H}^-(\lfloor g_n(\u)(d) \rfloor + 1) $ must connect to the unique perturbed point in $B^+(\x_1, \delta)$.
As discussed earlier, after this, on the event $A^2_n$, the process must take an up step 
to the perturbed point in $B^+(\x_2, \delta)$.
  We observe that the events $A^1_n$ and $A^2_n$ are independent as they depend on perturbations of
disjoint set of lattice points. Hence in order to complete the proof all we just 
need to show that for all $n\geq 0$, 
the probability $\P(A^2_n \mid {\cal F}_n)$ is uniformly bounded away from zero.

Now for the process $\{g_j(\u) : j \geq 0\}$, any $\w \in \Z^d$ with $\w(d) = \lfloor g_n(\u)(d) \rfloor + 2$ can not belong to $\Gamma_n$. By Lemma \ref{lemma:UnexploredCone} we have that the cone 
$C(\x_1)$ remain unexplored which contains the region $B^+(\x_2, \delta)$ as well (see Figure \ref{fig:TauTail}). Hence we can find two distinct lattice points 
$\w_1, \w_2 \in S^{\text{up}}_n$ with $||\w_1 - \x_1||_1 \vee ||\w_2 - \x_2||_1
\leq 1$ which allows us to have 
\begin{align*}
& \P( B^+(\x_1, \delta) \cap V \neq \emptyset, B^+(\x_2, \delta) \cap V \neq \emptyset \mid {\cal F}_n)\\
\geq & \P( \w_1 + U_{\w_1} \in C(\x_1)\cap B^+(\x_1, \delta), \w_2 + U_{\w_2} \in B^+(\x_2, \delta) \mid {\cal F}_n ) \\
= & \delta^2/2^{d+1} \times \delta^2/2^d.
\end{align*}
Finally Corollary \ref{cor:JtExplorationProcessS_n_Properties} gives us that $\#(S^{\text{same}}_n \cup S^{\text{up}}_n)$ remains bounded by $c^2_d$.
The unexplored cone $C(\x_1)$ provides sufficient unexplored spaces
outside the region $B^+(\x_1, r)$ to place the perturbations of the remaining points in 
$S^{\text{same}}_n \cup S^{\text{up}}_n$ excluding the points $\w_1, \w_2$. 
So for any $\w \in S^{\text{same}}_n \cup S^{\text{up}}_n \setminus \{\w_1, \w_2\}$, 
Lebesgue measure of the feasible region $(B(\w) \cap C(\x_1)) \setminus B(\x_1, r)$
is uniformly bounded away from zero $\ell_d$ (say). This follows from the fact that for any $\w \in S^{\text{same}}_n \cup S^{\text{up}}_n \setminus \{\w_1, \w_2\}$, 
Lebesgue measure of the feasible region $(B(\w) \cap C(\x_1)) \setminus B(\x_1, r)$
is a continuous function in $||\w - \x_1 ||_1$ which is positive throughout. 
Hence on a compact domain it must have a strictly positive minima. 
Therefore we have 
$$
\P(A^2_n \mid {\cal F}_n) \geq (\delta^2/2^{d+1} \times \delta^2/2^d) (\ell_d/2^d)^{c^2_d - 2}.
$$ 
 This completes the proof.
 \qed
 
 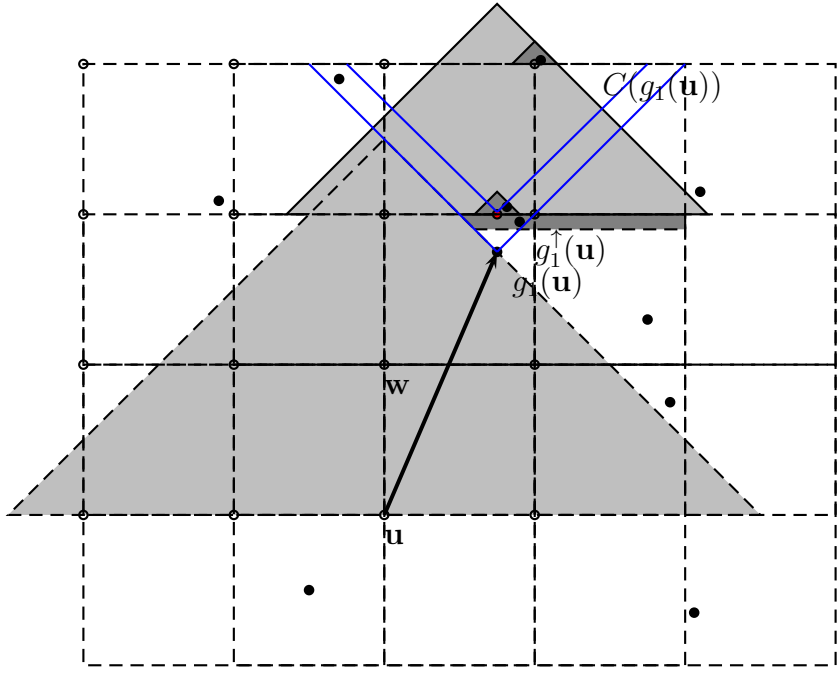
\begin{figure}[ht!]
 \label{fig:TauTail}
\begin{center}

\begin{pspicture}(2,-2)(6,8)

\pspolygon[linestyle=dashed, fillstyle = solid, fillcolor = gray](4,4)(8, 4)(8,3.8)(4,3.8)

\pspolygon[fillcolor=lightgray,fillstyle=solid](5.5, 6.8)(8.3,4)(2.7, 4)
\pspolygon[linestyle=dashed, fillstyle = solid, fillcolor = lightgray](4,5) (-1,0)(9,0)

\pspolygon[fillcolor=gray,fillstyle=solid](5.5,4.3)(5.8,4)(5.2, 4)
\pspolygon[fillcolor=gray,fillstyle=solid](6,6.3)(5.7,6)(6.3, 6)

\pscircle[fillcolor=black,fillstyle=solid](5.5,3.5){.07}

\pscircle[fillcolor=black,fillstyle=none](0,0){.07}
\pscircle[fillcolor=black,fillstyle=none](2,2){.07}
\pscircle[fillcolor=black,fillstyle=none](4,2){.07}
\pscircle[fillcolor=black,fillstyle=none](6,2){.07}
\pscircle[fillcolor=black,fillstyle=none](0,2){.07}
\pscircle[fillcolor=black,fillstyle=none](2,0){.07}
\pscircle[fillcolor=black,fillstyle=none](4,0){.07}
\pscircle[fillcolor=black,fillstyle=none](6,0){.07}
\pscircle[fillcolor=black,fillstyle=none](0,4){.07}
\pscircle[fillcolor=black,fillstyle=none](2,4){.07}
\pscircle[fillcolor=black,fillstyle=none](4,4){.07}
\pscircle[fillcolor=black,fillstyle=none](6,4){.07}
\pscircle[fillcolor=black,fillstyle=none](0,6){.07}
\pscircle[fillcolor=black,fillstyle=none](2,6){.07}
\pscircle[fillcolor=black,fillstyle=none](4,6){.07}
\pscircle[fillcolor=red,fillstyle=none](6,6){.07}
\pscircle[fillcolor=black,fillstyle=solid](5.63,4.1){.07}
\pscircle[fillcolor=black,fillstyle=solid](7.8,1.5){.07}

\pscircle[fillcolor=black,fillstyle=solid](3.4,5.8){.07}
\pscircle[fillcolor=red,fillstyle=solid](5.5,4){.07}
\pscircle[fillcolor=black,fillstyle=solid](6.08,6.05){.07}

\pspolygon[linestyle=dashed](4,2)(4,-2)(0,-2)(0,2)
\pspolygon[linestyle=dashed](6,2)(6,-2)(2,-2)(2,2)
\pspolygon[linestyle=dashed](8,2)(8,-2)(4,-2)(4,2)
\pspolygon[linestyle=dashed](10,2)(10,-2)(6,-2)(6,2)

\pspolygon[linestyle=dashed](4,4)(4,0)(0,0)(0,4)
\pspolygon[linestyle=dashed](6,4)(6,0)(2,0)(2,4)
\pspolygon[linestyle=dashed](8,4)(8,0)(4,0)(4,4)
\pspolygon[linestyle=dashed](10,4)(10,0)(6,0)(6,4)

\pspolygon[linestyle=dashed](4,6)(4,2)(0,2)(0,6)
\pspolygon[linestyle=dashed](6,6)(6,2)(2,2)(2,6)
\pspolygon[linestyle=dashed](8,6)(8,2)(4,2)(4,6)
\pspolygon[linestyle=dashed](10,6)(10,2)(6,2)(6,6)

\psline[linewidth=1.5pt]{->}(4,0)(5.5,3.5)

\pscircle[fillcolor=black,fillstyle=solid](8.2,4.3){.07}
\pscircle[fillcolor=black,fillstyle=solid](7.5,2.6){.07}

\pscircle[fillcolor=black,fillstyle=solid](1.8,4.18){.07}
\pscircle[fillcolor=black,fillstyle=solid](5.8,3.9){.07}
\pscircle[fillcolor=black,fillstyle=solid](8.12,-1.3){.07}
\pscircle[fillcolor=black,fillstyle=solid](3,-1){.07}

\psline[linecolor = blue](5.5,3.5)( 8, 6)
\psline[linecolor = blue](5.5,3.5)( 3, 6)

\psline[linecolor = blue](5.5,4.03)( 3.5, 6)
\psline[linecolor = blue](5.5,4.03)( 7.5, 6)

\rput[tl](4,-.2){$\mathbf{u}$}
\rput[tl](4,1.8){$\mathbf{w}$}
\rput[tl](6,3.8){$g^\uparrow_1(\mathbf{u})$}
\rput[tl](5.7,3.3){$g_1(\mathbf{u})$}
\rput[tl](6.9,5.9){$C(g_1(\mathbf{u}))$}

\end{pspicture}
\caption{ On the event $A^1_1\cap A^2_1$, after bounded many steps $g_1(\u)$ takes step 
to the unique perturbed point in $B^+(\x_1, \delta)$ and then takes an up step to a perturbed point in 
$B^+(\x_2, \delta)$. The points $\x_1$ and $\x_2$ are represented as red circles. The cone $C(\x_1)$ is contained in the cone $C(g_1(\u))$ (both represented in blue lines) and hence remain unexplored. $C(\x_1)$ contains the region $B^+(\x_2, \delta)$ as well. We can also observe that there are ample spaces within the cone $C(\x_1)$
outside $B(\x_1, r)$ (represented as a lightgray shaded region) to place perturbations of the 
remaining points in $S^\text{same}_n \cup S^\text{up}_n$.}
\end{center}

\end{figure}

Finally we end this section with the proof of Lemma  \ref{lem:GivenUpStep_NextUpStepBound}. 

\noindent \textbf{Proof of Lemma \ref{lem:GivenUpStep_NextUpStepBound}}
Given that the $n$-th step $\langle g_{n-1}(\u), g_{n}(\u) \rangle$ is an up step, 
$g_n(\u)$ belongs to the region $B^+(g^\downarrow_n(\u), \delta)$. By Lemma \ref{lemma:UnexploredCone}, 
we also have that $C( g^\downarrow_n(\u) + \delta e_d) \cap H_n
= \emptyset $. As the set $S^{\text{up}}_n$ is non-empty always and $B^+(g^\uparrow_n(\u), \delta) \subset 
C( g^\downarrow_n(\u) + \delta e_d) $, using the unexplored cone it is not difficult to create a favourable configuration whose probability is uniformly bounded from below and on this event,
 $g_n(\u)$ must take an up step to $B^+(g^\uparrow_n(\u), \delta)$.  This completes the proof.
\qed

\begin{remark}
For the joint exploration process of $2$ DSF 
paths, the proof is essentially similar and we only give a brief sketch here.
 Fix any $n \in \N$ and given ${\cal F}_n$, 
we need to show that in geometric many steps the process takes an up step.
Rest of the argument is exactly the same. Now, if we have $\lfloor g_n(\u)(d) \rfloor = \lfloor g_n(\v)(d) \rfloor$, then the argument for an up step within geometric many steps is same as that of Lemma \ref{lem:SpecialSingleUpStep}. If not then we need to wait for the lower vertex to come at the same level and the argument of Lemma \ref{lem:SpecialSingleUpStep} also gives us that 
in at most geometric many steps the lower level vertex will catch the other one.
There is one important point that while working with the joint process, at most one element 
in $S^\text{up}_n$ could be used up whereas for the marginal process all the elements in 
$S^\text{up}_n$ remain unexplored. Since at most one element in $S^\text{up}_n$ could be used
up, it does not create any problem and the argument still goes through.      
\end{remark}

\section{Exploration process at renewal steps}
\label{sec:RW_Martingale}
In this section we first show that for the marginal process $\{g_n(\u) : n \geq 0\}$ dealing with a 
single DSF trajectory, the sequence of renewal steps gives rise to a random walk process with i.i.d. 
increments. Later for the joint process we will show that the difference between the two DSF oaths 
observed at the renewal steps gives a Markov chain. 
 
\subsection{Random walk for the marginal path}
\label{sec:RWalk}
For $\x \in \R^d$, the notation $\overline{\x} := (\x(1), \cdots, \x(d-1)) \in \R^{d-1}$
denotes the projection of $\x$ on the first $d-1$ co-ordinates.
For the marginal process $\{g_n (\u) : n \geq 0\}$ we define 
\begin{align}
\label{def:SinglePtRwalk}
Y_{j+1} = Y_{j+1}(\u) := \overline{\widehat{g_{\tau_{j+1}}}}(\u) - \overline{\widehat{g_{\tau_{j}}}}(\u) \in \Z^{d-1} \text{ for }j \geq 0.
\end{align}

Now we focus on the implications of our renewal step construction for  a 
single DSF trajectory starting from $\u$. Recall that for any $j \geq 1$ the
lattice point $\widehat{g_{\tau_j}}(\u)$ is ${\cal G}_{\tau_j}$ measurable as well.
Our next proposition explains the renewal structure observed at these random steps. 
\begin{proposition}  
\label{prop:SinglePt_Rwalk}
$\{ Y_{j+1} : j \geq 1\}$ is a sequence of i.i.d. $\Z^{d-1}$ valued random variables
whose distribution does not depend on the starting point $\u$. 
\end{proposition}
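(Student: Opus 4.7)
The argument rests on the strong Markov property (Proposition 3.1) combined with the translation invariance of the i.i.d.\ perturbation field $\{U_\w\}_{\w \in \Z^d}$. I would organize the proof in three main steps.

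First, by Proposition 3.1 the triple $(g_n(\u), H_n, \Gamma_n)$ is a Markov chain, and $\tau_j$ is a stopping time for the filtration $\{{\cal G}_n\}$ (as noted just before Proposition \ref{prop:BetaExpTail}). Hence the strong Markov property applies: conditional on ${\cal G}_{\tau_j}$, the post-$\tau_j$ evolution of the process is determined by the state at time $\tau_j$ together with fresh randomness independent of ${\cal G}_{\tau_j}$.

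Second, I would isolate which components of the state at $\tau_j$ are actually needed to drive future evolution, and show that each is translation-invariantly distributed. The definition of the renewal step gives three structural consequences: (i) $g_{\tau_j}(\u) = \widehat{g_{\tau_j}}(\u) + U_{\widehat{g_{\tau_j}}(\u)}$ with $U_{\widehat{g_{\tau_j}}(\u)}$ conditionally uniform on $B^+(\mathbf{0},\delta)$; (ii) for every $\w \in N(\widehat{g_{\tau_j}}(\u))$, the perturbation $U_\w$ is conditionally uniform on $B^+(\mathbf{0},\delta)$; (iii) because the last $m_d$ steps are up steps, the height bound from the recursion $L(H_{n+1}) \leq \max\{L(H_n), \|g_n(\u)-h(g_n(\u))\|_1\}$ together with the discussion following Lemma \ref{lem:HistoryHeightBound} forces $H_{\tau_j}$ to lie inside a small $O(\delta)$ neighborhood of $\widehat{g_{\tau_j}}(\u)$, while the increment structure forces $\Gamma_{\tau_j}$ to contain no lattice point with $d$-coordinate strictly above $\widehat{g_{\tau_j}}(\u)(d)$. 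All perturbations used by the future exploration from level $\widehat{g_{\tau_j}}(\u)(d)+1$ upward are therefore fresh i.i.d.\ uniform.

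Third, I would apply translation invariance. Shift the entire configuration by $-\widehat{g_{\tau_j}}(\u) \in \Z^d$. This map preserves the lattice, preserves the joint distribution of $\{U_\w\}$, and converts the constraints in (i)--(iii) into constraints anchored at the origin whose joint law does not depend on $j$ or on $\widehat{g_{\tau_j}}(\u)$. It follows that the conditional law of $\bigl(g_{\tau_j+k}(\u) - \widehat{g_{\tau_j}}(\u)\bigr)_{k \geq 0}$ given ${\cal G}_{\tau_j}$ is a single fixed law that depends neither on the lattice location $\widehat{g_{\tau_j}}(\u)$ nor on the past trajectory. Since the random variable $Y_{j+1} = \overline{\widehat{g_{\tau_{j+1}}}}(\u) - \overline{\widehat{g_{\tau_j}}}(\u)$ is a measurable function of this translated future evolution, its conditional law given ${\cal G}_{\tau_j}$ is a fixed probability measure $\mu$ on $\Z^{d-1}$; being non-random, this conditional law implies that $Y_{j+1}$ is independent of ${\cal G}_{\tau_j}$, and in particular of $Y_1,\ldots,Y_j$ (which are ${\cal G}_{\tau_j}$-measurable via $\Gamma_{\tau_j}$).

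The main obstacle is the verification in (iii) that the conditional distribution of the pair $(H_{\tau_j}, \Gamma_{\tau_j})$, translated by $-\widehat{g_{\tau_j}}(\u)$, is the same for every $j$. This requires careful bookkeeping along the $m_d$ consecutive up steps leading to $\tau_j$: one must check that the shape of $H_{\tau_j}$ near $\widehat{g_{\tau_j}}(\u)$ is entirely generated by the local up-step configuration (whose law is translation-invariant by construction), and that no information from the distant past leaks into the future through constraints on unused perturbations. Once this is established, the i.i.d.\ conclusion follows immediately from the translation argument.
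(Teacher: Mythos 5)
Your proposal is correct and follows essentially the same route as the paper: the paper likewise argues that, given ${\cal G}_{\tau_j}$, the history can be taken to be empty, the point $g_{\tau_j}(\u)$ and the perturbations indexed by $N(\widehat{g_{\tau_j}}(\u))$ are uniform on the corresponding $B^+(\cdot,\delta)$ regions while all other relevant perturbations are fresh, and then restarts the process at $\widehat{g_{\tau_j}}(\u)$ and uses translation invariance under lattice shifts. The only cosmetic difference is that the paper fixes a reference increment $W$ (for the process restarted at the origin with the renewal initial condition) and verifies independence by a product-of-indicators conditioning, which is equivalent to your observation that the conditional law of $Y_{j+1}$ given ${\cal G}_{\tau_j}$ is a deterministic measure; both treatments also exclude $Y_1$, whose law differs because the initial condition at $\u$ carries no information on the perturbations in $N(\u)$.
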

\noindent \textbf{ Proof:} The used up lattice point $\widehat{g_{\tau_j}}(\u) \in \Gamma_{\tau_j} $ is ${\cal G}_{\tau_j}$ measurable. Given ${\cal G}_{\tau_j}$, occurrence of the event 
$A(g^\uparrow_{\tau_j - 1}(\u))$ ensures that when we \textit{restart} the process from the lattice point $\widehat{g_{\tau_j}}(\u)$, then we must have 
$$
h(\widehat{g_{\tau_j}}(\u)) = g_{\tau_j}(\u),
$$
and thereafter it follows the same trajectory. 
By definition of an `up step' it follows that 
the newly created history region during 
an up step is well controlled. We note that at the $\tau_j$-th step, the last $m_d$ many
steps  are all `up' steps including the fact that the last step is a special one. Hence  
 by Lemma \ref{lem:HistoryHeightBound} it follows that the earlier explored region is no longer part of $H_{\tau_j}$. Further the $\sigma$-field ${\cal G}_{\tau_j}$ does not 
have any information about the perturbed points $\w + U_\w$ for 
$\w(d) \geq \widehat{g_{\tau_j}}(\u)(d)$ and $\w \notin N(\widehat{g_{\tau_j}}(\u))$.
Hence for any such $\w$, the r.v $U_\w$ is uniformly distributed over $[-1,+1]^d$ independent of the $\sigma$-field ${\cal G}_{\tau_j}$. More importantly, given ${\cal G}_{\tau_j}$ the 
point $g_{\tau_j}(\u)$ is uniformly 
distributed over the region $B^+(\widehat{g_{\tau_j}}(\u), \delta)$. 
This essentially tells us that given ${\cal G}_{\tau_j}$,
 when we restart the process from $\widehat{g_{\tau_j}}(\u)$, the history set
  $H_{\tau_j}$ can be taken to be empty. 
  
  It is important to observe that for the restarted process we have certain information about the 
perturbed point $\w^\prime = \w + U_\w$ for $\w \in N(\widehat{g_{\tau_j}}(\u))$, viz.,     
  conditional to ${\cal G}_{\tau_j}$, the point $\w^\prime$
is uniformly distributed over $B^+(\w, \delta)$. With this information 
about the points $\w + U_\w$ for all $\w \in N(\widehat{g_{\tau_j}}(\u))$
we restart the process from $\widehat{g_{\tau_j}}(\u)$.
 Since our model is translation invariant under translations by lattice points,
the distribution of $Y_j$'s are identical for $j \geq 2$. 
Suppose we start a DSF path $\{g_n(\mathbf{0}) : n \geq 0\}$  from the origin with 
 history set $H_0 = \emptyset$ and the information that for all $\w \in  N(\mathbf{0})$, the 
 associated perturbed point is uniformly distributed over $B^+(\w, \delta)$ and let $\tau_1$
 denote the first renewal step for such a process. Define the increment random variable $W$ as 
$$
W := \overline{\widehat{g_{\tau_1}}}(\mathbf{0}) - \overline{\mathbf{0}}.
$$
The earlier discussions give us that given ${\cal G}_{\tau_j}$, 
the increment r.v. $Y_{j+1}$ has the same distribution as $W$.
Fix $ m \geq 2 $ and Borel subsets $ B_2, \dotsc, B_m $ of $ \Z^{d-1}$. Let 
$ I_{j} ( B_j ) $ be the indicator random variable of the event $ \{ Y_{j} \in B_{j } \} $. Then, we have 
\begin{align*}
& \P (  Y_{j} \in B_{j } \text{ for } j =  2, \dotsc, m )
= \E (  \prod_{ j = 2}^{m } I_{j} ( B_{j} )) \\
&  = \E \Bigl( \E  \bigl( \prod_{ j = 2}^{m } I_{j} ( B_{j} ) 
\mid {\cal G}_{m-1} \bigl) \Bigr) = \E \Bigl( \prod_{ j = 2}^{m-1 } I_{j} ( B_{j} ) 
\E  \bigl(  I_{m} ( B_{m} ) \mid {\cal G}_{m-1} \bigl) \Bigr)\\
& = \P( W \in B_m)\E \Bigl( \prod_{ j = 2}^{m-1 } I_{j} ( B_{j} )  \Bigr) = \prod_{j=2}^m
\P( W \in B_j).
\end{align*}
 Note that $Y_1$ does not have the same distribution as that of $W$. Because though
 the initial history set is empty, we don't have any information about the perturbed points $\w + U_\w$
 for $\w \in N(\u)$. This proves that $\{ Y_j : j \geq 2\}$ gives a sequence of i.i.d.
 $\Z^{d-1}$ valued random increments.
\qed

Exactly the same argument as in the above proposition gives us the following corollary:  
\begin{corollary}
For $\{g_n(\u) : n \geq 0\}$, the set $\{ (Y_{j+1}, \tau_{j+1}-\tau_{j}) : j \geq 1\}$ gives
 a sequence of i.i.d. $(\Z^{d-1}\times \N)$ valued random vectors
whose distribution does not depend on the starting point $\u$.
\end{corollary}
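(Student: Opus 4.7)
The plan is to piggyback on the argument for Proposition \ref{prop:SinglePt_Rwalk}, observing that the only new ingredient needed here is to include the time-increment $\tau_{j+1}-\tau_j$ in the vector being tracked. The crucial point established in the proof of Proposition \ref{prop:SinglePt_Rwalk} is that, conditioned on ${\cal G}_{\tau_j}$, one can \emph{restart} the DSF exploration process from the lattice point $\widehat{g_{\tau_j}}(\u)$ with history set $H_{\tau_j}$ effectively empty, and with the only residual information being that $\w+U_\w\in B^+(\w,\delta)$ for $\w\in N(\widehat{g_{\tau_j}}(\u))$. Thus, given ${\cal G}_{\tau_j}$, the entire future evolution $\{g_{\tau_j+n}(\u) : n\geq 0\}$ has a conditional distribution that depends neither on $\u$ nor on $j$, but only on the fixed initial data above.

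Now I would observe that both coordinates of the random vector $(Y_{j+1},\tau_{j+1}-\tau_j)$ are measurable functions of this restarted process alone: the time-increment $\tau_{j+1}-\tau_j$ is, by construction, exactly the first renewal step for the restarted exploration (since the restriction $\tau_{j+1}>\tau_j+m_d$ in (\ref{def:Tau1Step}) ensures that the last $m_d$ steps used to detect the new renewal lie strictly after $\tau_j$), and $Y_{j+1}=\overline{\widehat{g_{\tau_{j+1}}}}(\u)-\overline{\widehat{g_{\tau_j}}}(\u)$ is the horizontal displacement of the lattice projection at that first renewal for the restarted process. Consequently, the conditional distribution of $(Y_{j+1},\tau_{j+1}-\tau_j)$ given ${\cal G}_{\tau_j}$ equals the distribution of $(W,\tau_1)$, where $W$ and $\tau_1$ are the first-renewal horizontal displacement and time for the process started from the origin with initial condition $\w+U_\w\in B^+(\w,\delta)$ for $\w\in N(\mathbf{0})$. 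Translation invariance of the perturbed lattice model under lattice shifts makes this common distribution independent of $\u$.

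The i.i.d. assertion then follows by the same telescoping computation used in Proposition \ref{prop:SinglePt_Rwalk}: for $m\geq 2$ and Borel sets $B_2,\dotsc,B_m\subset\Z^{d-1}\times\N$, iteratively condition on ${\cal G}_{\tau_{m-1}}$, ${\cal G}_{\tau_{m-2}}$, etc., using at each step that the conditional law of $(Y_j,\tau_j-\tau_{j-1})$ given ${\cal G}_{\tau_{j-1}}$ is the law of $(W,\tau_1)$, to factor the joint probability as a product. Since this law does not depend on the past, independence and identical distribution follow simultaneously.

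The only mildly subtle point — and where I would spend care in a full write-up — is verifying that $\tau_{j+1}-\tau_j$ is indeed a functional of the restarted process and is not contaminated by the pre-$\tau_j$ history. This requires noting that because the last $m_d$ steps ending at $\tau_j$ are up steps (the last one special), Lemma \ref{lem:HistoryHeightBound} and the ``discarding old history'' argument from Proposition \ref{prop:SinglePt_Rwalk} apply, so $H_{\tau_j}$ may be replaced by $\emptyset$ for the purposes of determining future steps; and the buffer $\tau_{j+1}>\tau_j+m_d$ guarantees that the window of $m_d$ steps certifying the next renewal sits entirely in the restarted regime, so $\tau_{j+1}-\tau_j$ coincides with the first renewal time of the restarted process. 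Once this is in place, the i.i.d. statement for pairs follows with no extra work beyond the scalar case already treated.
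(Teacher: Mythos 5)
Your proposal is correct and follows essentially the same route as the paper, which proves this corollary by exactly the argument of Proposition \ref{prop:SinglePt_Rwalk}: restart at the renewal step $\tau_j$ with empty history and the residual information on $N(\widehat{g_{\tau_j}}(\u))$, note that $(Y_{j+1},\tau_{j+1}-\tau_j)$ is a functional of the restarted process alone (the buffer $\tau_{j+1}>\tau_j+m_d$ placing the certifying window after $\tau_j$), invoke translation invariance, and conclude by the same telescoping conditioning on ${\cal G}_{\tau_{m-1}},{\cal G}_{\tau_{m-2}},\dotsc$. Your added remark on why the time increment is not contaminated by the pre-$\tau_j$ history is exactly the point the paper leaves implicit, so no further changes are needed.
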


Below we mention some properties of moments of the increments observed at renewal steps. 
For the single DSF path process $\{g_n(\u) ; n \geq 0\}$, 
the same argument of Lemma \ref{lemma:UnexploredCone} gives us that each step the increment $||g_{n+1}(\u) - g_{n}(\u)||_1$ is bounded (by $m_d$) and hence by Proposition \ref{prop:BetaExpTail} it follows that 
the increment random variable $|Y_{j+1}(i) - Y_j(i)| $ has moments of all orders
for $1 \leq i \leq d-1$.  Below we list out some further properties of these random variables which follows from the symmetry of the set $\cup_{\w \in N(\widehat{g_{\tau_j}}(\u))}B^+(\w , \delta)$
 considered at renewal step $\tau_j$.
  
\begin{corollary}
\label{cor:FirstCoOrdRW}
\begin{itemize}
\item[(i)] By reflection symmetry of the model, about any of the first $(d-1)$ coordinates, 
we have that the increment random variable $(Y_2 - Y_1)(j)$ is symmetric for each $1\leq j \leq d-1$.
Further the rotational symmetry of the model in the first $d-1$ coordinates implies that  
the marginal distributions $(Y_2 - Y_1)(j)$ are the same for $1\leq j \leq d-1$. In other words, 
$$
\P((Y_2 - Y_1)(j) = +m) = \P((Y_2 - Y_1)(l) = -m) \text{ for all } m \geq 1 \text{ and } 1 \leq j , l \leq d-1.  
$$ 
\item[(ii)] Consider $ j, l \in \{1, \cdots, d-1\}$ with $j \neq l$. By reflection symmetry along the 
$j$-th coordinate, with other coordinates being fixed, we observe that the joint distribution of 
$(Y_2 - Y_1)(j)(Y_2 - Y_1)(l)$ remains unchanged. This implies that $\E[(Y_2 - Y_1)(j)(Y_2 - Y_1)(l)] = 
\E[-(Y_2 - Y_1)(j)(Y_2 - Y_1)(l)]$ and hence we have $\E[(Y_2 - Y_1)(j)(Y_2 - Y_1)(l)] = 0$.
The same argument holds to obtain that $\E[((Y_2 - Y_1)(j))^{m_1}((Y_2 - Y_1)(l))^{m_2}] = 0$ for $m_1, m_2 \geq 1$ with at least one of them being odd. 
\end{itemize}
\end{corollary}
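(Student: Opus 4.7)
The plan is to exploit the symmetries of the perturbation field and of the renewal construction in the first $d-1$ coordinates. For $j \in \{1,\ldots,d-1\}$, let $R_j : \R^d \to \R^d$ denote the reflection sending $\x \mapsto \x - 2\x(j)e_j$ (negating the $j$-th coordinate and fixing all others), and for distinct $j, l \in \{1,\ldots,d-1\}$, let $\sigma_{jl}$ denote the transposition of the $j$-th and $l$-th coordinates. Each such map fixes the $d$-th coordinate and preserves $\Z^d$, the cube $[-1,+1]^d$, and both the $l_1$ and $l_\infty$ norms. Consequently, pushing the i.i.d.\ field $\{U_\w : \w \in \Z^d\}$ forward by either transformation (applied to every lattice point simultaneously) yields a field with the same joint law.

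The next step is to verify by direct inspection that every ingredient of the renewal construction is equivariant under $R_j$ and $\sigma_{jl}$: the successor map $h$ (built from $\|\cdot\|_1$ and a strict inequality only in coordinate $d$), the half-balls $B^\pm(\x,r)$, the history sets $H_n$, the used-lattice set $\Gamma_n$, the filtration $\{{\cal G}_n\}$, the $l_\infty$-neighbourhood $N(\w)$, the event $A(\w)$, the region $B^+(\w,\delta)$, and hence the notions of up step, special up step, and the renewal times $\tau_j$. Starting the marginal process at $\u = \mathbf{0}$ with the symmetric initial data described in the proof of Proposition~\ref{prop:SinglePt_Rwalk} (namely $\w + U_\w$ uniform on $B^+(\w,\delta)$ for $\w \in N(\mathbf{0})$), this data is itself invariant under $R_j$ and $\sigma_{jl}$, so the canonical increment $W \in \Z^{d-1}$ satisfies $W \stackrel{d}{=} R_j(W)$ and $W \stackrel{d}{=} \sigma_{jl}(W)$. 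The same invariance applies to the first increment $Y_1$ because its initial history is empty, hence trivially symmetric.

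Claim (i) is then immediate: the reflection invariance forces $W(j) \stackrel{d}{=} -W(j)$ and the permutation invariance forces $W(j) \stackrel{d}{=} W(l)$. By Proposition~\ref{prop:SinglePt_Rwalk}, the variable $Y_2$ is independent of ${\cal G}_{\tau_1}$ and hence of $Y_1$, and each of $Y_1(j)$, $Y_2(j)$ is symmetric with marginals not depending on $j$, so $(Y_2-Y_1)(j)$ inherits componentwise symmetry and marginals independent of $j$. For Claim (ii), fix distinct $j,l \in \{1,\ldots,d-1\}$ and apply $R_j$ to obtain the joint equality $(W(j), W(l)) \stackrel{d}{=} (-W(j), W(l))$; the identity
\begin{equation*}
\E[W(j)^{m_1}W(l)^{m_2}] = (-1)^{m_1}\,\E[W(j)^{m_1}W(l)^{m_2}]
\end{equation*}
then forces the expectation to vanish whenever $m_1$ is odd, and the same conclusion transfers to $\E[(Y_2-Y_1)(j)^{m_1}(Y_2-Y_1)(l)^{m_2}]$ by independence of $Y_1, Y_2$ and binomial expansion. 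Existence of the required moments is ensured by Lemma~\ref{lem:HistoryHeightBound} (bounded per-step $l_1$ increment) together with the exponential tail of $\tau_1$ in Proposition~\ref{prop:BetaExpTail}.

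The main obstacle is purely bookkeeping: one must trace through the definitions of $H_n$, $\Gamma_n$, $N(\w)$, $A(\w)$, the up/special-up criteria and the filtration ${\cal G}_n$, confirming in each case that the construction uses only $\Z^d$, half-balls $B^\pm(\cdot,\cdot)$, the distinguished direction $e_d$, and $l_1$ or $l_\infty$ comparisons---ingredients manifestly invariant under $R_j$ and $\sigma_{jl}$. There is no subtle probabilistic obstruction.
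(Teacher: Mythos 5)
Your proposal is correct and follows essentially the same route as the paper: the paper's argument is precisely the symmetry of the perturbation field and of the renewal conditioning set $\cup_{\w \in N(\widehat{g_{\tau_j}}(\u))}B^+(\w,\delta)$ under reflections and coordinate permutations of the first $d-1$ coordinates, with moment existence from the bounded per-step increment and the exponential tail of $\tau_{j+1}-\tau_j$. Your additional bookkeeping (equivariance of $h$, $H_n$, $\Gamma_n$, $N(\w)$, $A(\w)$, the up/special-up steps and ${\cal G}_n$, plus transferring the symmetry to $Y_2-Y_1$ via the independence from Proposition \ref{prop:SinglePt_Rwalk}) just makes explicit what the paper leaves implicit.
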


Hence, Corollary \ref{cor:FirstCoOrdRW} gives us that the diffusively 
scaled DSF path converges to the Brownian motion.

\subsection{Joint process at the renewal steps}
\label{sec:MarkovMartingale}

Next we explain the Markov properties observed at the renewal steps for the joint 
exploration process starting from $\u$ and $\v$. 
Set $Z_0 =  Z_0(\u, \v) = \overline{\u} - \overline{\v}$ and for $j \geq 1$ we define  
\begin{align}
\label{def:ZProcess}
Z_{j} = Z_j(\u, \v) := \overline{\widehat{g_{\tau_j}}}(\u) - \overline{\widehat{g_{\tau_j}}}(\v).
\end{align}
The following proposition describes the Markov property observed at these random steps. 
\begin{proposition} 
\label{prop:TwoPt_Markovwalk} 
\label{def:TwoPt_Markovwalk}
$\{ Z_{j+1} : j \geq 1\}$ is a $\Z^{d-1}$ valued time homogeneous 
Markov chain with absorbing state $\mathbf{0} \in \Z^{d-1}$. 
\end{proposition}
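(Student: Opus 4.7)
The approach is to lift the renewal mechanism of Proposition \ref{prop:SinglePt_Rwalk} to the joint process. The key fact is that at each renewal time $\tau_j$, the information in $\mathcal{G}_{\tau_j}$ relevant to the future evolution is captured, up to a global lattice translation, by the single vector $Z_j$.

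First, I would verify that at $\tau_j$ the situation is essentially a fresh start from the pair $(\widehat{g_{\tau_j}}(\u), \widehat{g_{\tau_j}}(\v))$. Because the last $m_d$ steps of the joint process are up steps, Lemma \ref{lem:HistoryHeightBound} together with the height-reduction discussion following Definition \ref{def:nUpStep} shows that $H_{\tau_j}$ is concentrated in a small region of height at most $2\delta(d-1)$ around the two current positions, thereby erasing any interaction accumulated before the run of up steps. Further, the terminal special up step ensures that $\mathcal{G}_{\tau_j}$ retains only the constraint $U_\w \in B^+(\w,\delta) - \w$ for $\w$ in $N(\widehat{g_{\tau_j}}(\u)) \cup N(\widehat{g_{\tau_j}}(\v))$; all remaining perturbations $U_\w$ with $\w(d) \geq \widehat{g_{\tau_j}}(\u)(d)$ are iid uniform and independent of $\mathcal{G}_{\tau_j}$. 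This mirrors the single-path restart in Proposition \ref{prop:SinglePt_Rwalk}.

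Second, the family $\{U_\w\}_{\w \in \Z^d}$ is invariant under integer translations, so shifting both starting points by a common lattice vector preserves the joint law of the restarted process. Consequently the conditional distribution of $Z_{j+1}$ given $\mathcal{G}_{\tau_j}$ depends on $(\widehat{g_{\tau_j}}(\u), \widehat{g_{\tau_j}}(\v))$ only through their difference. Because the terminal special up step forces $\widehat{g_{\tau_j}}(\u)(d) = \widehat{g_{\tau_j}}(\v)(d)$, this difference is recovered from $Z_j$ alone. Using that $Z_1, \dotsc, Z_j$ are $\mathcal{G}_{\tau_j}$-measurable yields $\P(Z_{j+1} = z' \mid Z_1, \dotsc, Z_j) = p(Z_j, z')$ for a time-independent kernel $p$.

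For the absorbing property of $\mathbf{0}$: if $Z_j = \mathbf{0}$, combining with $\widehat{g_{\tau_j}}(\u)(d) = \widehat{g_{\tau_j}}(\v)(d)$ gives $\widehat{g_{\tau_j}}(\u) = \widehat{g_{\tau_j}}(\v)$, and hence $g_{\tau_j}(\u) = g_{\tau_j}(\v)$ since both points share the same perturbation $U_{\widehat{g_{\tau_j}}(\u)}$; the joint evolution rule then forces coalescence for all subsequent times. The main technical obstacle is the first step: when the two paths are close before $\tau_j$, the jointly generated history set may have nontrivial structure from the interaction of the two trajectories, and one must ensure that the $m_d$ up steps purge it in a manner that respects translation invariance. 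This is precisely why $m_d$ was chosen so that $L(H_n) \leq m_d - 4$, making $m_d$ consecutive up steps sufficient to cleanse any pre-renewal explored region.
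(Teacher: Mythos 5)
Your proposal is correct and follows essentially the same route as the paper: the renewal structure (history effectively emptied, with only the $B^+(\w,\delta)$ constraints on the $N(\widehat{g_{\tau_j}}(\u))\cup N(\widehat{g_{\tau_j}}(\v))$ neighbourhoods retained, all other perturbations above the renewal level fresh) gives the Markov property of the pair of restarted lattice points, and lattice translation invariance together with the equality of the $d$-th coordinates at renewal steps reduces the transition law to a time-homogeneous kernel in the difference $Z_j$, with absorption at $\mathbf{0}$ built into the coalescence rule.
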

\noindent \textbf{Proof: } 
We first show that the process $\{\bigl ( \widehat{g_{\tau_j}}(\u), \widehat{g_{\tau_j}}(\v) \bigr) 
: j \geq 1\}$ is a $\Z^d$ valued Markov chain.  The argument is similar to that of the earlier proposition. Given 
${\cal G}_{\tau_j}$, the restarted process starting from the lattice points $\widehat{g_{\tau_j}}(\u)$ and $\widehat{g_{\tau_j}}(\v)$ with empty history set and information that for any $\w \in N(\widehat{g_{\tau_j}}(\u))\cup N(\widehat{g_{\tau_j}}(\v)$ the associated perturbed point $\w + U_\w$  is uniformly distributed over the region $B^+(\w, \delta)$.
 Also the $\sigma$-field ${\cal G}_{\tau_j}$ does not have any information 
about the random variables $U_\w$ for $\w \in \Z^d \setminus (N(\widehat{g_{\tau_j}}(\u))\cup N(\widehat{g_{\tau_j}}(\u))$
with $\w(d) \geq \widehat{g_{\tau_j}}(\u)(d)$. Hence using  a random mapping representation,
 we can show that  the process $\{\bigl ( \widehat{g_{\tau_j}}(\u), \widehat{g_{\tau_j}}(\v) \bigr) 
: j \geq 1\}$ is a $\Z^d$ valued Markov chain.

Further, by definition of a renewal step, for any $j \geq 1$ we have that 
$\widehat{g_{\tau_j}}(\u)(d) = \widehat{g_{\tau_j}}(\v)(d)$. 
Since our model is translation invariant under 
translations by lattice points only, given 
${\cal G}_{\tau_j}$ the conditional distribution of $\bigl ( \widehat{g_{\tau_{j+1}}}(\u), \widehat{g_{\tau_{j+1}}}(\v) \bigr)$ depends only on the difference  $\widehat{g_{\tau_j}}(\u) - \widehat{g_{\tau_j}}(\v)$. Finally, the proof follows from the observation that
 $(\widehat{g_{\tau_j}}(\u) - \widehat{g_{\tau_j}}(\v))(d) = 0$ for all $j \geq 1$.
 \qed

In the next section we explore properties of 
the process $\{Z_j : j \geq 1\}$ in more detail which will allow us 
to conclude that the coalescing time of any two DSF paths is an a.s. finite r.v.

\section{Proof of Theorem \ref{thm:TreeDSF}}
\label{sec:Trees}
In this section we prove Theorem \ref{thm:TreeDSF}. For $d=2,3$, we need to 
show that for $\x, \y \in V$, the DSF paths $\pi^\x$ and $\pi^\y$ coincide eventually, i.e., 
there exists $t_0 < \infty$ such that  $\pi^\x(s) = \pi^\y(s)$ for all $s \geq t_0$.
Following \cite{RSS16}, we first argue that it is enough to show that
\begin{align}
\label{eq:TreeSameLevel}
\pi^{\u} \text{ and }\pi^{\v} \text{ coincide eventually for all }\u, \v \in \Z^d
\text{ with }\u(d) = \v(d).
\end{align}
This follows from the simple observation that 
\begin{align*}
\P \bigl [  \bigcap_{\x , \y \in V} \{& \text{ there exist }\u, \v \in \Z^d \text{ with }\u(d) = \v(d)
\text{ such that }\\
& h(\u) = h^{l_1}(\x), h(\v)= h^{l_2}(\x) \text{ for some }l_1, l_2\}\bigr] = 1.
\end{align*}
Since the DSF paths starting from  $\u$ and $\v$ reaches special up step in finitely many steps a.s.,
the above equality follows. Now to  show that for $\u, \v \in \Z^d$ with $\u(d) = \v(d)$,
we consider the $\Z^{d-1}$ valued Markov chain $Z_j (\u, \v) : j \geq 1$ and 
show that this Markov chain
hits  $\mathbf{0} \in \Z^{d-1}$ almost surely.

\subsection{The case $d=2$}
\label{sec:d2}

In this section we prove Theorem \ref{thm:TreeDSF} for $d=2$ by showing that for $\u, \v \in \Z^d$
with $\u(d) = \v(d)$, the DSF paths $\pi^{\u}, \pi^{\v}$ coincide eventually with 
probability $1$. To do that, in the following section
 we prove a stronger result by showing that the tail of the 
coalescing time decays as in case of coalescing time of two independent random walks (with finite variances). This estimate is crucial to show convergence to the Brownian web. 
Because of (\ref{eq:TreeSameLevel}), this completes the proof of Theorem \ref{thm:TreeDSF} for $d=2$. 
In this context it is useful to mention that we find it is hard to show that for $d=2$, 
the Markov chain $\{Z_j(\u, \v) : j \geq 2 \}$ is a martingale and hence could not apply 
the technique prescribed by Coletti et. al. in \cite{CFD09} to achieve coalescing time tail decay.

\subsubsection{Coalescing time tail estimate for DSF paths for $d=2$}
\label{sec:CoalescingTimeTail}
Let $\u,\v \in \Z^2$ be chosen such that $\u(1)<\v(1)$ and $\u(2)=\v(2)$. 
Without loss of generality we can take $\u(2) = 0$.
The coalescence time of the two DSF paths $\pi^{\u}$ and $\pi^{\v}$ starting from $\u$ and $\v$,
 is given by:
\begin{equation}
\label{CoalTime:u1u2}
T(\u,\v) := \inf \{t\geq \u(2) : \, \pi^{\u}(t) = \pi^{\v}(t) \}.
\end{equation}
We prove the following proposition on tail decay of $T(\u, \v)$.

\begin{prop}
\label{prop:DSF_CoalescingTimeTail}
For $\u,\v \in \Z^2$ with $\u(1)<\v(1)$ and $\u(2)=\v(2)$, 
there exists a constant $C_0 > 0$, which does not depend on $\u, \v$ such that, 
for any $t > 0$, 
$$
\P(T(\u,\v) > t) \leq \frac{C_0(\v(1)- \u(1)) }{\sqrt{t}}.
$$
\end{prop}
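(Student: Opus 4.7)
The plan is to reduce Proposition \ref{prop:DSF_CoalescingTimeTail} to a discrete hitting-time estimate for the Markov chain $\{Z_j\}$ at renewal steps (Proposition \ref{prop:TwoPt_Markovwalk}), and then convert back to the vertical-time scale via the renewal structure. Since the DSF paths $\pi^\u, \pi^\v$ are non-crossing and coalesce precisely when $Z_j$ first hits $0$, and since by Lemma \ref{lem:HistoryHeightBound} the vertical jump per step is bounded by $m_d$, we have $T(\u,\v) \leq m_d\, \tau_{T^Z}$, where $T^Z := \inf\{j \geq 0 : Z_j = 0\}$. For any $m \in \mathbb{N}$ this yields
\[
\P(T(\u,\v) > t) \leq \P(T^Z > m) + \P(\tau_m > t/m_d).
\]
With $m = \lfloor t/(2 m_d \mu) \rfloor$ for $\mu = \E[\tau_{j+1} - \tau_j] < \infty$, the second term is exponentially small in $t$ by a Cram\'er-type large deviation bound, using the uniform exponential tail from Proposition \ref{prop:BetaExpTail}; this contribution is negligible compared to the target $1/\sqrt{t}$ decay.

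The main task is then
\begin{equation}
\label{eq:Zhitplan}
\P(T^Z > m \mid Z_0 = z_0) \leq \frac{C z_0}{\sqrt{m}}, \qquad z_0 > 0.
\end{equation}
The key features of $\{Z_j\}$ I would exploit are: (a) by the $\u \leftrightarrow \v$ swap symmetry, the transition kernel satisfies $p(z,w) = p(-z,-w)$, so the drift $\mu(z) := \E[Z_{j+1} - z \mid Z_j = z]$ is an odd function; (b) Proposition \ref{prop:BetaExpTail} gives uniformly bounded moments for $|Z_{j+1} - Z_j|$; (c) non-crossing forces $Z_j$ to keep constant sign until absorption, so we may assume $Z_j \geq 0$; (d) when $|Z_j|$ is large, the exploration cones $C(g_n(\u)), C(g_n(\v))$ (Lemma \ref{lemma:UnexploredCone}) are disjoint, so the two paths effectively decouple and $\{Z_j\}$ behaves as a symmetric random walk with i.i.d.\ increments (by Corollary \ref{cor:FirstCoOrdRW}) outside a bounded interaction zone $\{|z| \leq K\}$.

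Since $\{Z_j\}$ is not globally a martingale, the Coletti--Fontes--Dias strategy is unavailable, and I would split the trajectory into excursions away from $\{|z| \leq K\}$. Outside the zone the chain is a symmetric random walk with finite-variance, bounded i.i.d.\ increments, and the classical reflection-principle / optional-stopping argument yields a $z_0/\sqrt{m}$ tail for first return to $\{|z| \leq K\}$. Inside the zone, by symmetry together with a uniform lower bound on the transition probability to $0$ (produced by a favorable-configuration argument analogous to that in the proof of Proposition \ref{prop:BetaExpTail}), the chain hits $0$ within a geometrically distributed number of steps; aggregating these excursion hitting-times via a Wald-type identity gives \eqref{eq:Zhitplan}.

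The main obstacle is making rigorous the effective decoupling in (d). Although the exploration cones are disjoint at large separation and the history set $H_n$ is localised near the two frontiers, the set of used lattice points $\Gamma_n$ pools contributions from both paths and could in principle correlate their future evolution. Overcoming this requires showing that beyond the current joint frontier, the relevant unused lattice neighbourhoods around the two paths are disjoint when $|Z_j|$ is large, so the conditional laws of the two future trajectories given $\mathcal{G}_{\tau_j}$ factorise. If such clean independence proves elusive, a more robust alternative is a direct Lyapunov-function argument on $\{Z_j\}$ using $V(z) = \sqrt{|z|}$: one shows, by Taylor expansion and the bounded-moment plus odd-drift properties, that $V$ is near-harmonic away from $0$, and then deduces \eqref{eq:Zhitplan} via a Doob-type optional-stopping inequality on $V(Z_{j \wedge T^Z})$.
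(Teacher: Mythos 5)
Your outer reduction coincides with the paper's: coalescence is controlled by the hitting time of $0$ for the renewal-step chain $\{Z_j\}$, and the conversion from the number of renewals back to vertical time, using the uniform exponential tails of $\tau_{j+1}-\tau_j$ from Proposition \ref{prop:BetaExpTail}, is exactly how the paper finishes the proof. The gap is in the core estimate $\P(\nu>m)\leq C z_0/\sqrt{m}$. Your primary route assumes that once $|Z_j|$ exceeds some fixed $K$ the chain evolves as a symmetric random walk with bounded i.i.d.\ increments, so that reflection-principle and excursion/Wald arguments apply. No such $K$ exists: the interaction between the two paths over one renewal block is governed by the width $W_{j+1}=2m_d(\tau_{j+1}-\tau_j)$, which is unbounded with an exponential tail, so at every finite separation there is a positive (exponentially small but nonzero) probability that the two exploration blocks overlap; consequently the increments are never exactly symmetric, never i.i.d., and not bounded, and the classical reflection principle is not available. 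Your symmetry input (a) also does not deliver what is needed: since non-crossing confines $Z_j$ to a single half-line, oddness of the kernel, $p(z,w)=p(-z,-w)$, says nothing about the drift at the states the chain actually visits.

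What the paper does instead is verify the hypotheses of a ready-made result, Corollary \ref{corol:LaplaceCoalescingTimeTailNew3} (Corollary 5.6 of \cite{CSST19}), which only requires \emph{approximate} symmetry: on the event $E_j=\{W_{j+1}<Z_j/4\}$ the two rectangles of half-width $W_{j+1}$ centred at $\widehat{g_{\tau_j}}(\u)$ and $\widehat{g_{\tau_j}}(\v)$ are disjoint, and interchanging the realizations of the perturbed points inside these two rectangles preserves the law and the renewal structure while flipping the sign of the increment, giving $\E[(Z_{j+1}-Z_j)\mathbf{1}_{E_j}\mid{\cal G}_{\tau_j}]=0$ with $\P(E_j^c\mid{\cal G}_{\tau_j})$ exponentially small in $Z_j$ (in particular $\leq C/Z_j^3$ as the corollary requires); combined with the second/third moment bounds and a uniform positive probability of coalescing from bounded separation (Corollary \ref{corol:ZprocessRwalkProperties}), this yields the $Cz_0/\sqrt{m}$ bound. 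Your fallback Lyapunov argument with $V(z)=\sqrt{z}$ is essentially a sketch of how such a hitting-time bound is proved, but it cannot be completed from what you have assembled: it needs precisely this quantitative ``zero conditional drift on a high-probability event, with failure probability decaying fast in $z$'' input, which your odd-drift observation does not supply and which — via the environment-swap argument — is the real content of the paper's proof.
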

We first observe that the above proposition tells us that $T(\u, \v)$ is almost surely finite
and because of (\ref{eq:TreeSameLevel}), it further proves that the $2$-dimensional 
perturbed DSF is connected with probability $1$. 

In order to get the required estimate, we will
apply a robust technique that was developed in \cite{CSST19}. 
We first quote Corollary 5.6 from \cite{CSST19} which essentially states 
that for a process which behaves like a symmetric random walk 
far from the origin and satisfy certain moment bounds, 
similar tail estimate holds. Symmetric random walk like behaviour far away from the 
origin is reflected through the property that increment of the 
process considered below, has null expectation on an event with high probability.

 \begin{corol}
\label{corol:LaplaceCoalescingTimeTailNew3}
Let $\{Y_t : t \geq 0\}$ be a $\{{\cal G}_t : t \geq 0\}$ adapted stochastic process taking values in $\R_+$. Let $\nu^Y := \inf\{t \geq 1 : Y_t = 0\}$ be the first hitting time to $0$. Suppose for any $t \geq 0$ there exist positive constants $ M_0, C_0, C_1, C_2, C_3 $ such that:
\begin{itemize}
\item[(i)] There exists an event $F_t$ such that, on the event $\{Y_t> M_0\}$, we have $\P(F^c_t \mid {\cal G}_t) \leq C_0 / Y^3_t$ and
\begin{equation*}
\E \big[ (Y_{t+1}-Y_t) \mathbf{1}_{F_t} \mid {\cal G}_t\big] = 0 ~.
\end{equation*}
\item[(ii)] For any $t\geq 0$, on the event $\{Y_t \leq M_0\}$,
\begin{equation*}
\E \big[ (Y_{t+1}-Y_t)  \mid {\cal G}_t\big] \leq C_1 ~.
\end{equation*}
\item[(iii)] For any $t\geq 0$ and $ m > 0 $, there exists $c_m > 0 $ such that, on the event $\{Y_t \in (0, m]\}$,
\begin{equation*}
\P \big( Y_{t+1} = 0 \mid {\cal G}_t\big) \geq c_m ~.
\end{equation*}
\item[(iv)] For any $t \geq 0$, on the event $\{Y_t> M_0\}$, we have
\begin{equation*}
\E \bigl[ (Y_{t+1} - Y_t)^2 \mid {\cal  G}_t \bigr] \geq C_2 \; \text{ and } \; \E \bigl[ |Y_{t+1} - Y_t|^3 \mid {\cal  G}_t \bigr] \leq C_3 ~.
\end{equation*}
\end{itemize}
Then, $\nu^Y<\infty $ almost surely. Further, there exist positive constants $C_4, C_5$ such that for any $y>0$ and any integer $n$,
\begin{equation*}
\P( \nu^Y > n \mid Y_0 = y) \leq \frac{C_4 + C_5 y}{\sqrt{n}} ~.
\end{equation*}
\end{corol}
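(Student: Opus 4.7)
The strategy is to use $f(y) = \sqrt{y}$ as a Lyapunov function and show that $\sqrt{Y_t}$ is a strict supermartingale in the bulk $\{Y_t > M_0\}$, then turn that supermartingale inequality into the desired $1/\sqrt{n}$ first-passage tail using the variance lower bound (iv) and the near-origin absorption (iii). Heuristically, hypotheses (i) and (iv) force $Y_t$ to resemble a centered random walk with step variance $\Theta(1)$ away from $0$, whose first hitting time of $0$ from $y$ should scale like the corresponding Brownian first passage, giving $\P(\nu^Y > n) \asymp y/\sqrt{n}$.

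I would start with a third-order Taylor expansion
\begin{equation*}
\sqrt{Y_{t+1}} - \sqrt{Y_t} \;=\; \frac{\Delta_t}{2\sqrt{Y_t}} \;-\; \frac{\Delta_t^2}{8\, Y_t^{3/2}} \;+\; R_t, \qquad |R_t| \;\leq\; C\,\frac{|\Delta_t|^3}{Y_t^{5/2}},
\end{equation*}
with $\Delta_t := Y_{t+1}-Y_t$. On $\{Y_t > M_0\}$ hypothesis (i) gives $\E[\Delta_t \mathbf{1}_{F_t} \mid {\cal G}_t] = 0$, so by H\"older with exponents $(3,3/2)$ together with (i), (iv),
\begin{equation*}
\bigl|\E[\Delta_t \mid {\cal G}_t]\bigr| \;=\; \bigl|\E[\Delta_t \mathbf{1}_{F_t^c} \mid {\cal G}_t]\bigr| \;\leq\; C_3^{1/3}\bigl(C_0/Y_t^3\bigr)^{2/3} \;=\; O(Y_t^{-2}).
\end{equation*}
Taking the conditional expectation of the Taylor expansion, the first-order contribution is $O(Y_t^{-5/2})$, the quadratic term is at most $-C_2/(8 Y_t^{3/2})$ by (iv), and the remainder is $O(Y_t^{-5/2})$. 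Enlarging $M_0$ to some threshold $M_1$ to absorb the lower-order terms gives
\begin{equation*}
\E\bigl[\sqrt{Y_{t+1}} - \sqrt{Y_t} \bigm| {\cal G}_t\bigr] \;\leq\; -\frac{\alpha}{Y_t^{3/2}} \quad \text{on } \{Y_t > M_1\},
\end{equation*}
so $\sqrt{Y_t}$ is a strict supermartingale in the bulk. Combined with the bounded drift (ii) in $\{Y_t \leq M_1\}$ and the uniform lower bound $c_{M_1}$ from (iii) on the probability of immediate absorption when $Y_t \in (0, M_1]$, a standard Foster--Lyapunov argument already yields $\nu^Y < \infty$ almost surely.

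For the quantitative tail I would set $\tau_n := \nu^Y \wedge n$ and use a dual moment argument. The submartingale inequality
\begin{equation*}
\E[Y_{t+1}^2 - Y_t^2 \mid {\cal G}_t] \;=\; \E[\Delta_t^2 \mid {\cal G}_t] + 2 Y_t\,\E[\Delta_t \mid {\cal G}_t] \;\geq\; C_2 - O(Y_t^{-1}) \;\geq\; C_2/2 \quad \text{on } \{Y_t > M_1\}
\end{equation*}
gives, via optional stopping, $\E[Y_{\tau_n}^2] \gtrsim \E[\#\{t < \tau_n : Y_t > M_1\}]$; meanwhile (iii) bounds the time spent in $\{Y_t \leq M_1\}$ by a geometric sum of block lengths, so $\E[\tau_n] \leq c_1 \E[Y_{\tau_n}^2] + c_2$. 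Dually, optional stopping on the truncated supermartingale $\sqrt{Y_{t \wedge \tau_n} \wedge M}$ (letting $M \to \infty$, with integrability supplied by (iv)) gives $\E[\sqrt{Y_{\tau_n}}] \leq \sqrt{y} + \mathrm{const}$, and an interpolation with the third-moment bound controls $\E[Y_{\tau_n}^2]$ by $C(y+1)^2$. Combining the two estimates produces $\E[\tau_n] \leq C(y+1)^2$, and a Doob--Markov bound on the maximum of the supermartingale $\sqrt{Y_{t \wedge \tau_n}}$ converts this moment control into $\P(\nu^Y > n) \leq (C_4 + C_5 y)/\sqrt{n}$.

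The hard part will be the interface between the two regimes: each crossing of the threshold $\{Y_t = M_1\}$ introduces an $O(1)$ discrepancy into the supermartingale inequality for $\sqrt{Y_t}$, and a crude union bound over crossings could wreck the $1/\sqrt{n}$ scaling. The fix is to bundle the steps in $\{Y_t \leq M_1\}$ into blocks that terminate at either absorption (guaranteed by (iii) with positive probability per block) or re-entry into the bulk, and to argue that on $\{\nu^Y > n\}$ both the number of such blocks and their total contribution to $\E[\sqrt{Y_{\tau_n}}]$ are $O(1)$ uniformly in $n$ and linear in $y$, so that the cumulative error never dominates the leading supermartingale drift.
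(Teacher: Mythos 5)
First, a point of reference: the paper does not prove this statement itself --- it is imported verbatim as Corollary 5.6 of \cite{CSST19}, where (as the label suggests) it is obtained by a Laplace-transform argument: one shows that $e^{-\theta t}\,e^{-\alpha Y_t}$ with $\alpha\asymp\sqrt{\theta}$ is a submartingale on $\{Y_t>M_0\}$, deduces $\E[e^{-\theta\nu^Y}]\geq 1-C(1+y)\sqrt{\theta}$ by optional stopping, and takes $\theta=1/n$. Your route is genuinely different. Its first half is sound in spirit: $\sqrt{Y_t}$ is indeed a strict supermartingale in the bulk, although your remainder bound $|R_t|\leq C|\Delta_t|^3/Y_t^{5/2}$ is false as stated, because the intermediate point in the Lagrange remainder can be close to $0$ when $Y_{t+1}$ is; you must split off the event $\{Y_{t+1}<Y_t/2\}$, observe that its contribution to $\E[\sqrt{Y_{t+1}}-\sqrt{Y_t}\mid{\cal G}_t]$ is negative anyway, and use (iv) to show its probability is $O(Y_t^{-3})$ so that removing it perturbs the first- and second-order terms only by $O(Y_t^{-2})$ and $O(Y_t^{-1})$ respectively.

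The fatal gap is in the quantitative second half. Your two inequalities would combine to give $\E[\nu^Y\wedge n]\leq C(y+1)^2$ uniformly in $n$, and this is false already for the simple symmetric random walk (which satisfies all four hypotheses): there $\E[\tau_0\wedge n]\asymp y\sqrt{n}\to\infty$. The step that fails is the ``interpolation'': a bound on $\E[\sqrt{Y_{\tau_n}}]$ gives no control whatsoever on $\E[Y_{\tau_n}^2]$ (consider $Y_{\tau_n}=n^2$ with probability $1/n$ and $0$ otherwise), and hypothesis (iv) bounds moments of increments, not of $Y_{\tau_n}$. Indeed the same submartingale computation shows $\E[Y_{\tau_n}^2]=y^2+O(\E[\tau_n])$, so your two estimates are mutually consistent with $\E[\tau_n]\asymp y\sqrt{n}$ and cannot close to a bound independent of $n$. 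The concluding appeal to Doob's maximal inequality for the nonnegative supermartingale $\sqrt{Y_{t\wedge\tau_n}}$ is also too lossy: it gives $\P(\sup_t Y_t\geq K)\leq\sqrt{y/K}$, and combining this with the block/absorption estimate and optimizing over $K$ yields at best a rate of order $n^{-1/4}$ (and even the sharper exceedance bound $\P(\sup_t Y_t\geq K)\leq C(1+y)/K$ from near-optional-stopping on $Y_t$ itself leaves a spurious $\sqrt{\log n}$ when combined with geometric block estimates). To obtain the clean $(C_4+C_5y)/\sqrt{n}$ you need the exponential supermartingale/Laplace-transform device, or an equivalently sharp replacement for your final two steps.
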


In order to apply the above corollary for DSF paths, 
we recall the process $\{Z_j : j \geq 2\}$ (see Definition (\ref{def:ZProcess})).
It useful to observe that the DSF paths are non-crossing a.s. which means that 
we have 
$$
\pi^\v (t) \geq \pi^\u (t) \text{ for all }t \geq \v(2). 
$$
Hence for the given choice of $\u, \v $ with $\u(1) \leq \v(1)$ the process $\{Z_j : j \geq 2\}$ 
becomes non-negative with the only absorbing state at $0$. 
We first obtain a suitable estimate on the number of renewal steps required 
by the process $\{Z_j : j \geq 2 \}$ to hit $0$ denoted by $\nu=\nu(\u,\v)$ :
\begin{equation}
\label{def:nu(u1,u2)}
\nu := \inf\{ j \geq 2 : Z_j = 0\} .
\end{equation}

The following corollary, states that the non-negative process $\{ Z_j : j \geq 2\}$
satisfies the conditions of Corollary \ref{corol:LaplaceCoalescingTimeTailNew3} and therefore 
have the required tail estimate in terms of number of renewal steps. This indeed
completes the proof of Theorem \ref{thm:TreeDSF} for $d=2$.
In order to complete the proof of Proposition \ref{prop:DSF_CoalescingTimeTail}, we further need 
to show that the coalescing time $T(\u, \v)$ decays in a similar manner.  
We note that the argument of Lemma \ref{lem:HistoryHeightBound} gives us that at each step,
increment of each path is bounded by $m_d$ (in fact by $m_d - 4$).   

We define the width random variable for $j \geq 0$
\begin{align}
\label{def:Widthrv}
W_{j+1} := 2m_d(\tau_{j+1} -\tau_j),
\end{align} 
and it is clear that given ${\cal G}_{\tau_j}$, the rectangles, $\widehat{g_{\tau_j}}(\u) + [-W_{j+1}, W_{j+1}] \times [0, W_{j+1}]$ and $\widehat{g_{\tau_j}}(\v) + [-W_{j+1}, W_{j+1}] \times [0, W_{j+1}]$,
centred at $\widehat{g_{\tau_j}}(\u)$ and $\widehat{g_{\tau_j}}(\v)$ respectively contain the region explored between $j$-th renewal and $j+1$-th renewal. Since each increment is bounded, Proposition \ref{prop:BetaExpTail}
ensures that for any $j\geq 1$ the random variable $W_j$ has exponentially decaying tail. Formally
there exist $C_0, C_1 > 0$ depending only on the number of DSF trajectories, $d$, and $\delta$ such that 
for all $n \geq 1$
\begin{equation}   
\label{eq:ExpDecayWidth}
\P(W_{j+1} \geq n \mid {\cal G}_{\tau_j}) \leq C_0 \exp{(- C_1 n)}.
\end{equation}
The next result says that, far from the origin, the Markov chain $\{Z_j : j \geq 2\}$
 behaves like a symmetric random walk satisfying certain moment bounds.

\begin{corol}
\label{corol:ZprocessRwalkProperties}
For the given choice of $\u, \v \in \Z^2$, 
there exist positive constants $M_0, C_0, C_1, C_2$ and $C_3$ such that:
\begin{itemize}
\item[(i)] For any $ j \geq 1$, let us set the event $E_j := \{W_{j +1} < Z_j/4 \}$.
 Then, on the event $\{Z_j \geq M_0\}$, we have $\P(E^c_j \mid {\cal G}_{\tau_j} ) 
 \leq C_3/(Z_j)^3$ and
\begin{equation*}
\E \big[ (Z_{j +1} - Z_{j}) \mathbf{1}_{E_j} \mid {\cal G}_{\tau_j} \big] = 0 .
\end{equation*}
\item[(ii)] For any $j \geq 1$, on the event $\{Z_j \leq M_0\}$,
\begin{equation*}
\E \big[ (Z_{j +1} - Z_{j})  \mid {\cal G}_{\tau_j} \big] \leq C_0 .
\end{equation*}
\item[(iii)] For any $j \geq 1$ and $m>0$, 
there exists $c_m>0$ such that, on the event $\{Z_j \in (0, m]\}$,
\begin{equation*}
\P \big( Z_{j+1} = 0 \mid {\cal G}_{\tau_j} \big) \geq c_m .
\end{equation*}
\item[(iv)] For any $j \geq 1$, on the event $\{Z_j > M_0\}$,
\begin{equation*}
\E \bigl[ ( Z_{j + 1} - Z_{j} )^2 \mid {\cal  G}_{\tau_j} \bigr] \geq C_1 \; \text{ and } \;
\E \bigl[ |Z_{j+1} - Z_{j} |^3 \mid {\cal  G}_{\tau_j}  \bigr] \leq C_2 .
\end{equation*}
\end{itemize}
\end{corol}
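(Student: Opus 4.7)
The strategy hinges on a \emph{decoupling} observation: on the event $E_j = \{W_{j+1} < Z_j/4\}$, the two restarted exploration processes after $\tau_j$ proceed on spatially disjoint collections of lattice points. Indeed, the exploration between $\tau_j$ and $\tau_{j+1}$ of each path is contained in a rectangle of half-width $W_{j+1}$ about its starting lattice point (as noted in the derivation of (\ref{eq:ExpDecayWidth})), so when $Z_j > 2 W_{j+1}$ these two rectangles are disjoint and the perturbations driving the two restarted paths come from disjoint sub-lattices. Combining this with the reflection symmetry of Corollary \ref{cor:FirstCoOrdRW} and the exponential tail (\ref{eq:ExpDecayWidth}) will simultaneously yield all four items.

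For item (i), the disjointness on $E_j$ together with the Markov structure of Proposition \ref{prop:TwoPt_Markovwalk} and the translation invariance of the i.i.d. perturbations lets us couple the joint evolution on $E_j$ with two independent copies of the single-path renewal increment of Proposition \ref{prop:SinglePt_Rwalk}. By Corollary \ref{cor:FirstCoOrdRW} each marginal increment is symmetric under reflection of the horizontal coordinate, and the joint up-step condition defining $\tau_{j+1}$ is itself invariant under this reflection, since it involves only vertical levels and $\|\cdot\|_1$-balls. Hence on $E_j$ the conditional joint law of $(Z_{j+1}-Z_j,\, W_{j+1})$ given $\mathcal{G}_{\tau_j}$ is symmetric in the first coordinate, which immediately yields
\[
\E\bigl[(Z_{j+1} - Z_j)\mathbf{1}_{E_j} \mid \mathcal{G}_{\tau_j}\bigr] = 0.
\]
The tail bound $\P(E_j^c \mid \mathcal{G}_{\tau_j}) \leq C_3/Z_j^3$ is then immediate from (\ref{eq:ExpDecayWidth}), after choosing $M_0$ large enough so that the exponential dominates the polynomial factor $1/Z_j^3$.

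For (ii) and the upper bound in (iv), the pointwise estimate $|Z_{j+1} - Z_j| \leq W_{j+1}$ (each path moves by at most $m_d$ in $\|\cdot\|_1$ per step, over $\tau_{j+1}-\tau_j$ steps) combined with the exponential tail (\ref{eq:ExpDecayWidth}) gives uniform bounds on the first and third conditional moments of $|Z_{j+1}-Z_j|$. The lower variance bound in (iv) invokes decoupling again: on $E_j$, $Z_{j+1}-Z_j$ is the difference of two independent symmetric non-degenerate integer-valued random variables, hence has variance uniformly bounded below; since $\P(E_j \mid \mathcal{G}_{\tau_j}) \to 1$ as $Z_j \to \infty$ by item (i), this bound survives on $\{Z_j > M_0\}$ for $M_0$ chosen suitably. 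For (iii), I will exhibit a favorable configuration: when the two paths are at horizontal distance at most $m$, by reserving perturbations of a bounded collection of lattice points (of size depending only on $m$ and $d$) one can force both paths to visit a common perturbed point within a bounded number of joint steps; the movement algorithm then coalesces them, so $Z_{j+1}=0$. Since only finitely many perturbations are constrained, each to a region of positive Lebesgue measure, this probability is uniformly bounded below by some $c_m > 0$.

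The main obstacle is making the decoupling in (i) rigorous. The joint renewal $\tau_{j+1}$ is defined by a condition that requires \emph{both} paths to be at a common lattice level during each of $m_d$ consecutive up steps, which in principle couples them through the interleaving movement algorithm. On $E_j$ the two trajectories consume disjoint lattice perturbations, so as measurable functions of the underlying i.i.d. environment they may be realized as independent copies of the marginal process; one then verifies that the joint up-step pattern of length $m_d$ is precisely the conjunction of the two marginal up-step patterns, so the joint renewal condition factorizes cleanly. Once this factorization is in place, the rest of the corollary follows routinely from Corollary \ref{cor:FirstCoOrdRW} and the tail estimate (\ref{eq:ExpDecayWidth}).
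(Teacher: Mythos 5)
Your proposal is correct in substance and rests on the same two pillars as the paper's argument: the containment of the inter-renewal exploration in boxes of half-width $W_{j+1}$ around the two restart points, which on $E_j=\{W_{j+1}<Z_j/4\}$ makes the two explored regions disjoint, and the exponential tail (\ref{eq:ExpDecayWidth}), which gives both $\P(E_j^c\mid {\cal G}_{\tau_j})\leq C_3/Z_j^3$ and, via $|Z_{j+1}-Z_j|\leq W_{j+1}$, the moment bounds in (ii) and (iv); your favourable-configuration argument for (iii) is also the paper's. Where you genuinely diverge is the zero-mean identity in (i): the paper never decouples into independent marginal processes there; it applies a measure-preserving \emph{interchange} of the environment in the two disjoint boxes (using translation invariance by the lattice vector joining the restart points), notes that the swap preserves the renewal condition, the block length and hence $E_j$, and exchanges the two paths' increments, so $Z_{j+1}-Z_j$ becomes $-(Z_{j+1}-Z_j)$ on $E_j$ — no comparison of joint versus marginal renewals is needed. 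Your route, realizing the two paths on $E_j$ as independent copies and invoking the reflection symmetry of Corollary \ref{cor:FirstCoOrdRW}, is closer to what the paper does in Section \ref{sec:3d} (the coupling on $A^{\text{Good}}_r$ plus the symmetry of the $\psi$ increments) and can be made rigorous, but it costs more: conditioning on $E_j$ destroys independence, so the argument must be phrased as an $E_j$-preserving measure-preserving involution on the environment (e.g.\ reflecting each box about the vertical lattice column through its own restart point), not as literal conditional independence; and your assertion that the joint renewal ``factorizes'' into the conjunction of the two marginal up-step patterns is not literally true — a joint up step also requires the two paths to sit at a common lattice level, so joint renewals correspond to \emph{simultaneous} marginal renewals, exactly the object the paper has to construct with care via $J_m$, $J'_m$. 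What your extra machinery buys is an explicit representation of the increment as a difference of two independent symmetric variables, which you use to sketch the lower bound $\E[(Z_{j+1}-Z_j)^2\mid{\cal G}_{\tau_j}]\geq C_1$ — a bound the paper's proof of (iv) leaves essentially implicit; to finish it you would still need a uniform non-degeneracy statement (a concrete configuration forcing $|Z_{j+1}-Z_j|\geq 1$ with probability bounded below, in the spirit of your argument for (iii)).
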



\begin{proof} We first consider  part (i). We already commented that the 
region explored by the DSF paths starting from $\u$ and $\v$ in between 
the $j$-th and ${j + 1}$-th  (joint) renewal step is enclosed within the rectangles 
$(\widehat{g_{\tau_j}}(\u) + [- W_{j+1}, W_{j+1}]\times [0,  W_{j+1}]
) $ and $(\widehat{g_{\tau_j}}(\v) + [- W_{j+1} ,  W_{j+1}]\times [0,  W_{j+1}]) $.
Now take $M_0$ sufficiently large and let us consider the trajectories of $\pi^{\u}$ 
and $\pi^{\v}$ in between the $j$-th and $( j + 1)$-th (joint) renewal steps.
The choice of $M_0$ ensures that the probability on the event $E_j^c$ is sufficiently small. 
We recall that the original paths agree with the restarted paths
starting from $\widehat{g_{\tau_j}}(\u)$ and $\widehat{g_{\tau_j}}(\v)$ 
and the restarted paths use perturbed points only in $\mathbb{H}^+(\widehat{ g_{\tau_j}}(\u)(2)$. 
On the event $E_j$, these two  rectangles, $(\widehat{g_{\tau_j}}(\u) + [- W_{j+1}, W_{j+1}]\times [0,  W_{j+1}]
) $ and $(\widehat{g_{\tau_j}}(\v) + [- W_{j+1} ,  W_{j+1}]\times [0,  W_{j+1}]) $, are disjoint. 
This allows us to resample 
the point process within these two rectangles satisfying the renewal condition so that 
the resampled point process still satisfies the (joint) renewal condition and 
the joint distribution of the restarted trajectories does not change. 

We construct a new point process in the following way:
\begin{itemize}
\item[(1)] The realizations of the perturbed points in the rectangles 
$$R_1 := \widehat{g_{\tau_j}}(\u) + [- M_0 / 4, M_0 / 4] \times 
[ 0, M_0 / 4] \text{ and  }R_2 := \widehat{g_{\tau_j}}(\v) + [ - M_0 / 4, M_0 / 4] 
\times [ 0, M_0 / 4] $$ are interchanged. 
\item[(2)] The realization of the perturbed point process
 outside these two rectangles is kept as it is.
\end{itemize}
Since $Z_j = | \widehat{g_{\tau_j}}(\u)(1) - \widehat{g_{\tau_j}}(\v)(1)| \geq M_0$, a sufficiently large quantity, the new point process that we got through interchange of points in those rectangular regions, 
also satisfy renewal condition. Further for the ``new'' restarted paths, constructed 
using the new point process, the number of steps until the next 
renewal step and the size of the corresponding renewal block remain exactly the same. 
In fact, the increments of the two DSF paths between the $j$-th and 
$(j + 1)$-th renewal steps have been interchanged. This means that 
the increment $I_{j + 1} = Z_{j+1} - Z_j$ has become $- I_{j + 1}$. 
This completes the proof of part (1).

Part (ii) follows readily from the fact that
\begin{align*}
\E[(Z_{j + 1} - Z_j) \mid {\cal G}_{\tau_j} ] \leq \E[|Z_{j + 1} - Z_j| \mid {\cal G}_{\tau_j} ]
 \leq \E(W_{j+1}) \mid {\cal G}_{\tau_j}) < \infty,
\end{align*}
where $T$ is a non-negative random variable with exponential tail such that for all $j \geq 0$,
the conditional distribution of $W_{j + 1} \mid {\cal G}_{\tau_j}$
is uniformly stochastically dominated by $T$.

For part (iii) we recall the fact that the $\sigma$-field ${\cal G}_{\tau_j}$, using the unexplored lattice points we can create a suitable configuration with it's probability uniformly 
bounded away from zero such that 
that the  conditional probability $\P(Z_{j + 1} = 0 \mid {\cal G}_{\tau_j})$ is strictly positive .

For part (iv) we observe that 
$$
 \E[|Z_{j + 1} - Z_j|^3 \mid {\cal G}_{\tau_j} ] \leq \E[(W_{j+1})^3 \mid {\cal G}_{\tau_j}] 
 \leq \E[T^3] < \infty,
$$
where $T$ is a uniformly dominating stochastic r.v.  
This completes the proof. 
\end{proof}

 The above corollary shows that the process $\{Z_j : j \geq 2\}$ satisfies the 
 conditions of Corollary \ref{corol:LaplaceCoalescingTimeTailNew3}, and hence 
 we have the following tail estimate on $\nu$.
For all integer $n \in \N$, there exists a positive constant $C_0$ 
which does not depend on $|\u(1) - \v(1)|$ such that,
\begin{equation}
\label{eq:CoalStepEst}
\P(\nu > n | Z_0 = (\v(1) - \u(1))) \leq \frac{C_0(\v(1)-\u(1))}{\sqrt{n}}.
\end{equation}

Since, the number of steps between two consecutive renewal steps decay exponentially, as shown in 
(\ref{eq:ExpDecayWidth}), using the above lemma we prove Proposition \ref{prop:DSF_CoalescingTimeTail}.

\noindent{\textbf{ Proof of Proposition \ref{prop:DSF_CoalescingTimeTail}:}} 
For our DSF model,  it is clear that 
$T(\u, \u) \leq \tau_\nu $.
Hence we have that,
\begin{equation}
\label{CoalescingTimetail:Ineq1}
\P(T(\u, \v) > t) \leq \P \Big( \sum_{j = 1}^{\nu} 
( \tau_j - \tau_{j-1}) > t \Big)
\leq \P \Big( \sum_{j = 1}^{\lfloor ct \rfloor} (\tau_j - \tau_{j-1}) > t \Big)
 + \P \big( \nu > \lfloor ct\rfloor \big).
\end{equation}
 Recall that the r.v.'s $(\tau_j - \tau_{j-1})|{\cal G}_{\tau_{j-1}}$ 
 are uniformly stochastically dominated with a random variable $T$ 
 with exponential tail. 
 Hence, it is not difficult to obtain
$$
\P \Big( \sum_{j = 1}^{\lfloor c't \rfloor} (\tau_{j}-\tau_{j-1}) > t \Big) \leq C_0 e^{-C_1 t}
$$
for a constant $c'>0$ small enough. To sum up, we have:
$$
\P(T(\u, \v) > t) \leq \P(\nu > \lfloor c't\rfloor) + C_0 e^{-C_1 t}
$$
from which we conclude using (\ref{eq:CoalStepEst}).
\qed
      
\subsection{The case $d=3$}
\label{sec:3d}

In this section we prove Theorem \ref{thm:TreeDSF} for $d=3$.
To do that we first describe simultaneous renewal of two independent DSF paths and 
we use it to approximate the joint distribution of DSF paths at (joint) renewal steps when the
paths are far apart. 
This section is motivated from \cite{RSS16} and we only give a brief sketch here. For details we refer the reader to Sections 3.2 and 3.3 of \cite{RSS16}. 

In order to construct two independent DSF paths, we start with two independent collections 
$\{U^a_\w : \w \in \Z^3\}$ and $\{U^b_\w : \w \in \Z^3\}$ of i.i.d. random variables where each 
random variable is uniformly distributed over $B(\w) = \w + [-1,1]^d$. These collections give 
two independent copies of perturbed point process given as $V^a := \{\w + U^a_\w : \w \in \Z^3\}$ 
and $V^b := \{\w + U^b_\w : \w \in \Z^3\}$. The process $\{g^{\text{Ind}}_n(\u) : n \geq 0\}$, starting 
from $\u$, uses the perturbed points in $V^a$ and the process $\{g^{\text{Ind}}_n(\v) : n \geq 0\}$, starting from $\v$, uses the perturbed points in $V^b$.
For $j \geq 0$, let $\tau^{\text{Ind}}_j(\u)$ and $\tau^{\text{Ind}}_j(\v)$ denote the $j$-th 
marginal renewal steps for the marginal processes  $\{g^{\text{Ind}}_n(\u) : n \geq 0\}$ and 
 $\{g^{\text{Ind}}_n(\v) : n \geq 0\}$ respectively.
It follows that the two sequences, $\{(\tau^{\text{Ind}}_{j+1}(\u) - \tau^{\text{Ind}}_{j}(\u)) 
: j \geq 1\}$ and $\{(\tau^{\text{Ind}}_{j+1}(\v) - \tau^{\text{Ind}}_{j}(\v)) 
: j \geq 1\}$ form independent collections of i.i.d. renewal times with exponentially decaying tails. 

Now we define the sequence of simultaneous renewal steps for the two independent paths.
We set $J_0 , J^\prime_0:= 0$.  For $m \in \N$ let 
\begin{align*}
J_{m+1} & := \inf\{ j > J_m :  \tau^{\text{Ind}}_{j}(\u) =  \tau^{\text{Ind}}_{j^\prime}(\v)
\text{ for some } j^\prime >J^\prime_m\} \\
J^\prime_{m+1} & := \inf\{ j^\prime > J^\prime_m :  \tau^{\text{Ind}}_{j^\prime}(\v) =  
\tau^{\text{Ind}}_{j}(\v) \text{ for some }j > J_m\}.
\end{align*}
It follows that for all $m \geq 0$, the r.v.'s $J_m$ and $J^\prime_m$ are finite a.s.
Then for $m \geq 0$
$$
\tau^{\text{Ind}}v_m(\u, \v) := \tau^{\text{Ind}}_{J_m}(\u) = 
\tau^{\text{Ind}}_{J^\prime_m}(\v), 
$$
gives the sequence simultaneous renewals for two independent paths.   

It follows that $\{\tau^{\text{Ind}}_{m+1}(\u, \v) - \tau^{\text{Ind}}_m(\u, \v) : m \geq 1\}$
forms an i.i.d. collection of random variables. Further by Proposition 3.3 of \cite{RSS16} we have 
that there exists positive constants $C_0, C_1$ such that for all $n \in \N$ and for all $m \geq 0$
 we have
\begin{equation}
\label{eq:IndRenewalExptailDecay}
\P(\tau^{\text{Ind}}_{m+1}(\u, \v) - \tau^{\text{Ind}}_m(\u, \v) \geq n \mid {\cal G}^\text{Ind}_m(\u, \v)) \leq C_0 \exp{(-C_1 n)},
\end{equation} 
where ${\cal G}^\text{Ind}_m(\u, \v) := \sigma \bigl({\cal G}_{\tau^{\text{Ind}}_m(\u)}(\u), {\cal G}_{\tau^{\text{Ind}}_m(\v)}(\v) \bigr )$.
Next we observe both the independent processes from the restarted points 
at the simultaneous renewal steps, i.e., 
$\{\u^\text{Ind}_{m+1} :=  \widehat{g^{\text{Ind}}_{\tau^{\text{Ind}}_{m+1}(\u, \v)}}(\u) : m \geq 0\}$ and 
$\{\v^\text{Ind}_{m+1} := \widehat{g^{\text{Ind}}_{\tau^{\text{Ind}}_{m+1}(\u, \v)}}(\v): m \geq 0\}$.
We observe that both the lattice points have the same $d$-th coordinate. 

Now it follows that for $d=3$, the processes 
$\{\overline{\u^\text{Ind}_{m+1}} : m \geq 0\}$ and 
$\{\overline{\v^\text{Ind}_{m+1}} : m \geq 0\}$  
form $\Z^2$ valued independent random walks with i.i.d. increments
\begin{align*}
\psi^{\u}_{m+1} & := \overline{\u^\text{Ind}_{m+1}} - \overline{\u^\text{Ind}_{m}}\text{ and }\\
\psi^{\v}_{m+1} & := \overline{\v^\text{Ind}_{m+1}} - \overline{\v^\text{Ind}_{m}},
\end{align*}
respectively. 
Clearly, both $\psi^{\u}_{m+1}$ and $\psi^{\v}_{m+1}$ have moments of all orders. 
The same argument as in Corollary \ref{cor:FirstCoOrdRW} gives us the following properties
 of moments of these $\Z^2$-valued random vectors. We observe that the independence of the two families ensure that 
we may apply rotation operator independently for both the families $\{U^a_\w : \w \in \Z^d\}$ and $\{U^b_\w : \w \in \Z^d\}$ independently. This allows us to derive the following important properties of the increment distributions: 
\begin{itemize}
\item[(i)] We apply the rotation operator independently to \textit{one} family and the same argument as in Corollary \ref{cor:FirstCoOrdRW} give us that the marginal distribution of each coordinate of $\psi^{(u)}_2$ as well as $\psi^{(v)}_2$
is symmetric. Further, they are all the same. More precisely, 
$$
\P(\psi^{\u}_2(l) = +r) = \P(\psi^{\u}_2(l) = -r) = \P(\psi^{\v}_2(j) = +r) = \P(\psi^{\v}_2(j) = -r)
\text{ for } 1 \leq i,j \leq 2. 
$$  
\item[(ii)] The same technique as in the proof of Proposition 3.4 of \cite{RSS16} gives us that $\E[(\psi^{\u}_2(l))^{m_1}(\psi^{\v}_2(l))^{m_2}]$ depends only on $m_1$ and $m_2$ and becomes zero if at least one of $m_1$ and $m_2$ is odd.
\end{itemize}

Next we describe a coupling procedure which will allow us to compare 
the independent DSF paths at simultaneous renewal steps with joint DSF paths 
at (joint) renewal steps  when the paths are far apart. We consider another 
collection $\{U^c_\w : \w \in \Z^3\}$ of i.i.d. random variables independent of the 
collections $\{U^a_\w : \w \in \Z^3\}$ and $\{U^a_\w : \w \in \Z^3\}$, such that $U^c_\w$ is uniformly 
distributed over $[-1, 1]^d$. We use these collections to construct a perturbed point process 
to construct the joint paths starting from $\u$ and $\v$. 

Set $d_{\text{min}} := ||\overline{\u} - \overline{\v}||_1 $. 
Fix $r < d_{\text{min}}/ 3$ and construct the new collection of i.i.d. random variables $\{\tilde{U}_\w : \w \in \Z^3\}$ as follows:
\begin{align*}
\tilde{U}_\w := 
\begin{cases}
U^a_\w & \text{ if } ||\overline{\u} - \overline{\w}||_1 < r\\
U^b_\w & \text{ if } ||\overline{\v} - \overline{\w}||_1 < r\\
U^c_\w & \text{ otherwise.}
\end{cases}
\end{align*}
Using the collection  $\{\tilde{U}_\w : \w \in \Z^3\}$ we obtain a new perturbed point process 
$\tilde{V} := \{\v + \tilde{U}_\w : \v \in \Z^3\}$ (which has the same distribution as $V$)
and use it to construct the joint process $\{(g_n(\u), g_n(\v)): n \geq 0\}$ starting from $\u$
and $\v$ till their first joint renewal step $\tau_{1}(\u, \v)$.

We observe that on the event $A^{\text{Good}}_r := \{ W_1  < r \}$, where $W_1$ is defined as in (\ref{def:Widthrv}), we have $\tau_{1}(\u, \v) = \tau^{\text{Ind}}_{1}(\u, \v)$.
 Not only that, the trajectory of the independent DSF paths
  coincide with that of the joint paths till $\tau_{1}(\u, \v) = 
  \tau^{\text{Ind}}_{1}(\u, \v)$. In particular, we have, 
\begin{align*}
\P[(\overline{\widehat{g_{\tau_1(\u, \v))}}}(\u), \overline{\widehat{g_{\tau_1(\u, \v))}}}(\v)) = 
(\overline{\u} + \psi^{\u}_1, \overline{\v} + \psi^{\v}_1) ]\geq 
\P(A^{\text{Good}}_r) \geq 1 - C_0 \exp{(-C_1 r)},
\end{align*}
where the last inequality follows from (\ref{eq:ExpDecayWidth}).
The Markov property allows us to extend this coupling for each subsequent renewal steps 
where for the $j$-th renewal, the value of $d_{min}$ is updated as 
$||\overline{\widehat{g_{\tau_1(\u, \v))}}(\u)}  -  
\overline{\widehat{g_{\tau_1(\u, \v))}}(\v)}||_1$.   
    
We follow the method used in \cite{RSS16} to prove that the perturbed DSF is connected a.s. 
for $d=3$. We recall that the auxiliary process obtained from restarted 
paths at (joint) renewal steps denoted by $\{Z_j (\u, \v) : j \geq 1 \}$ (see (\ref{def:ZProcess})) forms a $\Z^2$ valued Markov chain. 
Because of (\ref{eq:TreeSameLevel}), it suffices to show that this Markov chain hits $(0,0)$
a.s. and to do that as in  \cite{RSS16}, we apply Foster's criterion (see \cite{A03}, Proposition 
5.3 of Chapter 1). We change the transition probability of $Z_j$ from the state $(0,0)$
in any reasonable way so that this state no longer remains an absorbing state and the resulting 
Markov chain becomes irreducible. With slight abuse of notation, we continue to denote the modified
chain by $\{Z_j : j \geq 2\}$ and show that this chain is recurrent.

Next, exactly the argument as in Proposition 4.2  of \cite{RSS16} gives us that, when the
paths are far apart, $Z_j(\u, \v)$ is well approximated by the independent process 
in expectation. More precisely for $m \geq 2$ we have
\begin{align}
\label{eq:JtRenewalApproxIndrenewal}
\bigl |\E[(||Z_{j+1}||^2_2 - ||\v||^2_2)^m \mid Z_{j} = \v ] - 
\E[(||((0,0) + \psi^{(0,0)}_1) - (\v + \psi^{\v}_1)||^2_2 - ||\v||^2_2)^m ]
  \bigr | \leq C^{(m)}_0\exp{(-C^{(m)}_1 ||\v||_2)},
\end{align}    
where $C^{(m)}, C^{(m)}_1$ are positive constants depending on $m$.

Now, we consider $f: \Z^2 \mapsto [0,\infty)$ defined by $f(\v) = \sqrt{\log(1 + ||\v||^2_2)}$. 
Clearly $f(\v) \to \infty$ as $||\v|| \to \infty$. Using Taylor's expansion of the 
function $h(t) = \sqrt{\log(1 + t)}$ and observing that the fourth derivative of $h$ is always negative,
we have
\begin{align}
\label{eq:FosterCalc1}
& \E[f(Z_1) - f(Z_0) \mid Z_0 = \v]\nonumber\\
& = \E[h(||Z_1||^2_2) - h(||Z_0||^2_2) \mid Z_0 = \v]\nonumber\\
& \leq \sum^{3}_{m=1}\frac{h^{(m)}(||\v||^2_2)}{m\!} \E[(||Z_1||^2_2 - ||\v||^2_2)^m \mid Z_0 = \v]\nonumber\\
& \leq \sum^{3}_{m=1}\frac{h^{(m)}(||\v||^2_2)}{m\!} \Bigl \{
\E \Bigl [ (||((0,0) + \psi^{(0,0)}_1) - (\v + \psi^{\v}_1)||^2_2 - ||\v||^2_2)^m \Bigr ] + 
C^{(m)}_0\exp{(-C^{(m)}_1 ||\v||_2)}\Bigr \},
\end{align} 
where $h^{(m)}$ represents $m$-th derivative of $h$ and the last inequality follows from (\ref{eq:JtRenewalApproxIndrenewal}).

Now in order to calculate (\ref{eq:FosterCalc1}) we use properties of $\psi = \psi^{(0,0)}_1$ that we observed earlier and obtain 
\begin{align*}
\E \Bigl [ (||((0,0) + \psi^{(0,0)}_1) - (\v + \psi^{\v}_1)||^2_2 - ||\v||^2_2) \Bigr ] & = 4\E[\psi^2] = \alpha (\text{say});\\
\E \Bigl [ (||((0,0) + \psi^{(0,0)}_1) - (\v + \psi^{\v}_1)||^2_2 - ||\v||^2_2)^2 \Bigr ] & \geq 8\E[\psi^2]||\v||^2_2 = 2\alpha||\v||^2_2 ;\\
\E \Bigl [ (||((0,0) + \psi^{(0,0)}_1) - (\v + \psi^{\v}_1)||^2_2 - ||\v||^2_2) \Bigr ] & = O(||\v||^2_2).
\end{align*}
Putting the above values of moments and plugging the expressions for $h^{(m)}$ in (\ref{eq:FosterCalc1})
we have that the first sum  in (\ref{eq:FosterCalc1}) is bounded by 
$\alpha||\v||^2_2/ [8 (1 + ||\v||^2_2)^2 (\log(1 + ||\v||^2_2))^3/2]$
whereas the second sum is bounded by $C_2\exp{(-C_3||\v||_2)}$ for a proper choice of $C_2, C_3 > 0$.
Hence we obtain that 
$$
\E[f(Z_{j + 1}) - f(Z_{j}) \mid Z_{j} = \v] < 0,
$$
for all $||\v||_2$ large enough. 
This implies that the modified Markov chain $\{Z_j : j \geq 1\}$ is recurrent and 
completes the proof of Theorem \ref{thm:TreeDSF}
for $d=3$.

\section{Convergence to the Brownian web}
\label{sec:cvBW}

This section is devoted to the proof of Theorem \ref{thm:BW},
 i.e,  that for $d=2$ the scaled perturbed DSF converges to the 
Brownian web (BW). In fact we prove a stronger version of the theorem in the sense that
we construct a dual process and show that under diffusive scaling the original process together 
with the dual process jointly converge to the BW and its dual. 
Towards this we will apply a robust technique that was developed in \cite{CSST19}
to study convergence to the BW for non-crossing path models.

We first start with introducing the dual BW $\widehat{{\cal W}}$.
As in case of forward paths, one can consider a similar metric space of collection of backward paths 
denoted by $(\widehat{\Pi}, d_{\widehat{\Pi}})$. 
The notation $(\widehat{{\mathcal H}}, d_{\widehat{{\mathcal H}}})$
denotes the corresponding Polish space of compact collections of backward paths with the induced 
Hausdorff metric. The BW and its dual denoted 
by $({\mathcal W},\widehat{{\mathcal W}})$ is a $({\mathcal H}\times \widehat{{\mathcal H}}, {\mathcal B}_{{\mathcal H}}\times {\mathcal B}_{\widehat{{\mathcal H}}})$-valued random variable such that:
\begin{itemize}
\item[$(i)$] $\widehat{{\mathcal W}}$ is distributed as $-{\mathcal W}$, the BW rotated $180^0$ about the origin;
\item[$(ii)$] ${\mathcal W}$ and $\widehat{{\mathcal W}}$ uniquely determine each other
in the sense that the paths of ${\cal W}$ a.s. do not cross with (backward) paths in 
$\widehat{{\cal W}}$. See  \cite[Theorem 2.4]{SSS17}. The interaction between the paths in ${\mathcal W}$ and $\widehat{\mathcal W}$ is that of Skorohod reflection (see \cite{STW00}).
\end{itemize}

Now it is time to specify a dual graph $\widehat{G}$ to the DSF $G$. 
The construction of the dual graph is not unique and we follow the construction of the 
dual graph from \cite{CSST19} (see Figure \ref{fig:dual}). 
We start by constructing a dual vertex set $\widehat{V}$.
 For any $(x,t) \in \R^2$,
 let $(x,t)_r \in V$ be the unique perturbed  point such that
\begin{itemize}
\item[$\bullet$] $(x,t)_r(2)<t$, $h((x,t)_r)(2)\geq t$ and $\pi^{(x,t)_r}(t)>x$ where $\pi^{(x,t)_r}$ denotes the path in ${\cal X}$ starting from $(x,t)_r$;
\item[$\bullet$] there is no path $\pi\in {\cal X}$ with $\sigma_\pi<t$ and $\pi(t)\in (x,\pi^{(x,t)_r}(t))$.
\end{itemize}
Hence, $\pi^{(x,t)_r}$ is the nearest path in ${\cal X}$ to the right of $(x,t)$ starting strictly before time $t$. It is useful to observe that $\pi^{(x,t)_r}$ is defined for any $(x,t)\in\R^2$. Similarly, $\pi^{(x,t)_l}$ denotes the nearest path to the left of $(x,t)$ which starts strictly before time $t$. Now, for each $(x,t)\in V$, the nearest left and right dual vertices are respectively defined as
$$
\widehat{r}_{(x,t)} := \bigl(( x + \pi^{(x,t)_r}(t))/2, t \bigr) \; \text{ and } \; \widehat{l}_{(x,t)} := \bigl(( x + \pi^{(x,t)_l}(t))/2, t \bigr) ~.
$$
Then, the dual vertex set $\widehat{V}$ is given by $\widehat{V}:=\{\widehat{r}_{(x,t)}, \widehat{l}_{(x,t)} : (x,t) \in V \}$.

Next, we need to define the dual ancestor $\widehat{h}(y,s)$ of $(y,s)\in\widehat{V}$ as the unique vertex in $\widehat{V}$ given by
\begin{equation*}
\widehat{h}(y,s) :=
\begin{cases}
\widehat{l}_{(y,s)_r} & \text{ if }(y,s)_r(2) >  (y,s)_l(2)  \\
\widehat{r}_{(y,s)_l} & \text{ otherwise.}
\end{cases}
\end{equation*}
The dual edge set $\widehat{E}$ is then given by $\widehat{E}:=\{\langle (y,s), \widehat{h}(y,s) \rangle : (y,s)\in \widehat{V}\}$. Clearly, each dual vertex has exactly one outgoing edge which goes in the downward direction. Hence, the dual graph $\widehat{G}:= (\widehat{V},\widehat{E})$ does not contain any cycle or loop. This forest $\widehat{G}$ is entirely determined from $G$ without any extra randomness. 

\begin{figure}[!ht]
\begin{center}
\includegraphics[width=7.5cm,height=6cm]{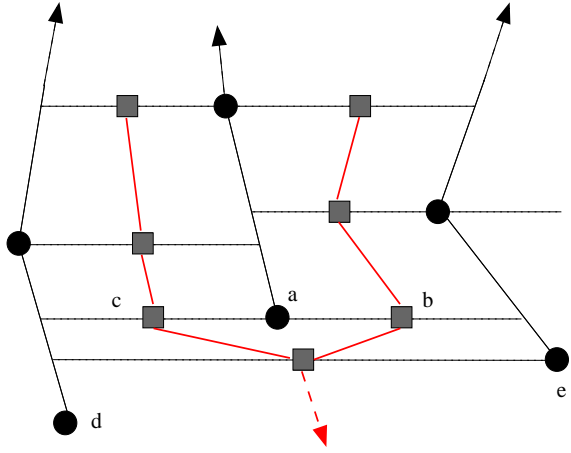}
\caption{Here is a picture of the DSF in upward direction and its dual forest
in downward direction. Vertices of the DSF are black circles whereas dual vertices are grey squares. In particular, the vertex $a$ produces left dual neighbour $c$ and right dual neighbour $b$. Both 
these two dual vertices, $b$ and $c$ take dual step to the same (dual) vertex.}
\label{fig:dual}
\end{center}
\end{figure}

The dual (or backward) path $\widehat{\pi}^{(y,s)}\in\widehat{\Pi}$ starting at $(y,s)$ is
constructed  by linearly joining the successive $\widehat{h}(\cdot)$ steps. 
Thus, $\widehat{{\cal X}}:=\{\widehat{\pi}^{(y,s)} : (y,s) \in \widehat{V}\}$ denotes the collection of all dual paths obtained from $\widehat{G}$.


Let us recall that ${\cal X}_n={\cal X}_n(\gamma,\sigma)$ for $\gamma,\sigma>0$ and $n\geq 1$, is the collection of $n$-th order diffusively scaled paths where for a path $\pi$ with starting time $
\sigma_{\pi}$, the scaled path $\pi_n(\gamma,\sigma) : [\sigma_{\widehat{\pi}}/n^2\gamma, \infty ] \to [-\infty, \infty]$ is given by
\begin{equation}
\label{defi:ScaledPath}
\pi_n(\gamma,\sigma) (t) := \pi(n^2\gamma t)/n \sigma ~.
\end{equation}
In the same way, we define $\widehat{{\cal X}}_n=\widehat{{\cal X}}_n(\gamma,\sigma)$ as the collection of diffusively scaled dual paths. 
For any dual path $\widehat{\pi}$ with starting time $
\sigma_{\widehat{\pi}}$, the scaled dual path $\widehat{\pi}_n(\gamma,\sigma) : [-\infty , \sigma_{\widehat{\pi}}/n^2\gamma] \to [-\infty, \infty]$ is given by
\begin{equation}
\label{defi:ScaledDualPath}
\widehat{\pi}_n(\gamma,\sigma) (t) := \widehat{\pi}(n^2\gamma t)/n \sigma ~.
\end{equation}
For each $n\geq 1$, the closure of $\widehat{{\cal X}}_n$ in $(\widehat{\Pi},d_{\widehat{\Pi}})$, still denoted by $\widehat{{\cal X}}_n$, is a $(\widehat{{\cal H}}, {\cal B}_{\widehat{{\cal H}}})$-valued random variable.\\

Now we are ready to state our result:
\begin{theorem}
\label{thm:BW_Joint}
There exist $\sigma >0$ and $\gamma>0$ such that the sequence
$$
\big\{ \big( \overline{{\cal X}}_n(\gamma,\sigma) , \overline{\widehat{\cal X}}_n(\gamma,\sigma) \big) : \, n \geq 1 \big\}
$$
converges in distribution to $({\cal W}, \widehat{{\cal W}})$ as $({\cal H}\times \widehat{{\cal H}},{\cal B}_{{\cal H}\times \widehat{{\cal H}}})$-valued random variables as $n\rightarrow\infty$.
\end{theorem}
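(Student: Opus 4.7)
The plan is to follow the robust framework developed in \cite{CSST19} for establishing joint convergence of non-crossing primal and dual path systems to $({\cal W}, \widehat{{\cal W}})$. Since the DSF paths are non-crossing a.s.\ (by the planar, monotone construction), it suffices to verify a finite list of conditions: (I) convergence of a single scaled path to a Brownian motion with variance $\sigma^2$; (FDD) convergence of the joint law of finitely many paths to that of coalescing Brownian motions; (T) a tightness estimate of Newman--Ravishankar--Sun type; and (D) a joint primal/dual ``non-crossing'' condition that forces $\overline{\widehat{\cal X}_n}$ to converge to a process characterized as the dual of the weak limit of $\overline{{\cal X}_n}$.

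For condition (I), I would use the renewal decomposition from Section \ref{sec:RW_Martingale}. By Proposition \ref{prop:SinglePt_Rwalk} and the accompanying corollary, the lattice coordinates $\{\overline{\widehat{g_{\tau_j}}}(\u) : j \ge 1\}$ form a random walk with i.i.d.\ increments $Y_{j+1}$ whose increments have moments of all orders (this follows from the exponential tail of $\tau_{j+1}-\tau_j$ in Proposition \ref{prop:BetaExpTail} combined with Lemma \ref{lem:HistoryHeightBound}, which bounds per-step displacement). The symmetry statements in Corollary \ref{cor:FirstCoOrdRW} show the increments are centered, so Donsker's invariance principle applied to the embedded walk, together with a straightforward control of the (uniformly stochastically dominated, exponentially tailed) time gaps $\tau_{j+1}-\tau_j$, yields convergence of a single diffusively scaled path $\pi^\u_n$ to a Brownian motion with some variance $\sigma^2>0$ after an appropriate time rescaling by $\gamma = 1/\E[\tau_2-\tau_1]$.

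For condition (FDD), the key input is the coalescing time tail estimate in Proposition \ref{prop:DSF_CoalescingTimeTail}. Combined with single-path convergence, this lets one couple any finite collection of scaled DSF paths with a system of coalescing Brownian motions: marginal convergence follows from (I), and the polynomial coalescence tail forces the before-coalescence paths to be asymptotically independent while guaranteeing that coalescence occurs on the diffusive time scale, exactly as for coalescing random walks. This is packaged in \cite{CSST19} into a direct verification of the finite-dimensional criterion for the BW. Tightness (T) for the primal family is then delivered, again via \cite{CSST19}, from the coalescence tail and the renewal-based control of the density of paths: the number of distinct scaled paths crossing any fixed macroscopic time interval $[t, t+\varepsilon]$ and starting in $[a,b]$ has the right first-moment bound because coalescence reduces path counts at the same rate as for coalescing walks, while the renewal i.i.d.\ structure controls fluctuations.

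The main obstacle, and the step that requires the most care, is the joint convergence together with the dual (condition (D)). Here one must show that $\overline{\widehat{\cal X}_n}$ is tight, that any subsequential weak limit consists of non-crossing backward paths, and that these backward paths together with the primal limit satisfy the characterizing no-crossing property of $({\cal W}, \widehat{{\cal W}})$. The first two points are inherited from the primal side once one establishes tail bounds for the dual coalescing time analogous to Proposition \ref{prop:DSF_CoalescingTimeTail}. The difficulty is that the dual vertices $\widehat{r}_{(x,t)}, \widehat{l}_{(x,t)}$ are functionals of the entire DSF and the perturbed lattice is strongly dependent, so one cannot replay the Poisson arguments of \cite{CSST19} verbatim; instead, I would re-run the renewal-step exploration of Sections \ref{sec:MarkovProp}--\ref{sec:RW_Martingale} simultaneously for a finite collection of primal and dual paths, using that between renewal steps both primal and dual increments are determined by perturbations inside the shared explored history region, and that a special up step wipes the history for the joint (primal $+$ dual) exploration as well. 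Combining this with the non-crossing of primal and dual paths (which holds by the geometric construction of $\widehat{G}$) yields the joint no-crossing property and identifies the limit as $({\cal W}, \widehat{{\cal W}})$ by uniqueness.
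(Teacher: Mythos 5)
Your outline for the primal marginal is essentially the paper's: renewal steps (Proposition \ref{prop:SinglePt_Rwalk}, Corollary \ref{cor:FirstCoOrdRW}) plus Donsker for a single path, and the coalescing-time tail (Proposition \ref{prop:DSF_CoalescingTimeTail}) together with the far-apart-implies-almost-independent argument for criterion $(I_1)$. But the part of your plan that is supposed to handle the dual and to rule out extra paths in the limit has a genuine gap. First, your tightness/density step (T): you claim the counting bound on the number of distinct positions occupied after time $t$ ``has the right first-moment bound because coalescence reduces path counts at the same rate as for coalescing walks.'' For non-crossing families this is exactly criterion $(B_2)$, and in the standard proofs it is obtained from the coalescing-time tail \emph{together with} an FKG-type correlation inequality (or an ad hoc duality). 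The paper points out explicitly that FKG is not available for the perturbed lattice, and it never verifies $(B_2)$; instead it invokes Theorem \ref{thm:JtConvBwebGenPath} (Theorem 32 of \cite{CSST19}), whose only nontrivial hypothesis beyond $(I_1)$ and the by-construction facts (primal/dual non-crossing, dense dual starting points) is condition (iv): no subsequential limiting primal path spends positive Lebesgue-measure time with a limiting dual path. That condition is what simultaneously kills extra primal paths and identifies the dual limit, and it is verified by a purely \emph{primal} estimate, Lemma \ref{lem:coalescenceDSF} ($\P(B^\epsilon_n(\delta,m))\le C_0(\delta,m)\,\epsilon$ for the three-path event), which needs only Proposition \ref{prop:DSF_CoalescingTimeTail} and a fluctuation bound — no FKG and no analysis of dual paths at all. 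Your proposal never states this Lebesgue-time condition, and ``joint non-crossing of subsequential limits plus uniqueness'' is not by itself enough to conclude $\widehat{\cal Z}=\widehat{\cal W}$: without (iv) a limiting dual path could stick to a primal path for positive time, which is precisely the degenerate behaviour the criterion excludes.

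Second, your proposed substitute — proving a dual analogue of Proposition \ref{prop:DSF_CoalescingTimeTail} by ``re-running the renewal-step exploration simultaneously for primal and dual paths,'' with special up steps wiping the joint history — does not work as described. The exploration process, the history sets $H_n$, and the renewal steps of Sections \ref{sec:MarkovProp}--\ref{sec:RW_Martingale} are built for paths evolving in the $+e_d$ direction; the dual paths evolve downward and their vertices $\widehat r_{(x,t)},\widehat l_{(x,t)}$ are functionals of the whole primal configuration on both sides, so there is no analogous upward renewal structure for them, and the paper deliberately avoids any such analysis. If you want a correct proof along the paper's lines, replace your step (D) by: (a) note conditions (i) and (iii) of Theorem \ref{thm:JtConvBwebGenPath} hold by construction of $\widehat G$; (b) prove Lemma \ref{lem:coalescenceDSF} using Proposition \ref{prop:DSF_CoalescingTimeTail} and the bounded-increment/fluctuation control coming from Lemma \ref{lemma:UnexploredCone} and the renewal tails; (c) conclude condition (iv) and hence the joint convergence, which also delivers Theorem \ref{thm:BW} without ever touching $(B_2)$.
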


Recall that the perturbed DSF paths are non-crossing and 
the convergence criteria to the BW  for
 non-crossing path models are provided in  \cite{FINR04}.
  The reader may refer to \cite{SSS17} for a very complete overview on the topic.
Let $\Xi\subset\Pi$. For $t>0$ and $t_0,a,b\in\R$ with $a<b$, consider the counting random variable $\eta_\Xi(t_0,t;a,b)$ defined as
\begin{equation}
\label{def:EtaPtSet}
\eta_{\Xi}(t_0,t;a,b) := \# \big\{ \pi(t_0+t) :\, \pi \in \Xi , \, \sigma_{\pi}\leq t_0 \text{ and } \pi(t_0) \in [a,b] \big\}
\end{equation}
which considers all paths in $\Xi$, born before $t_0$, that intersect $[a,b]$ at time $t_0$ and counts the number of different positions these paths occupy at time $t_0+t$.  Theorem 2.2 of \cite{FINR04} lays out the following convergence criteria .

\begin{theorem}[Theorem 2.2 of \cite{FINR04}]
\label{thm:BwebConvergenceNoncrossing1}
Let $\{\Xi_n : n \in \N\}$ be a sequence of $({\mathcal H},B_{{\mathcal H}})$ valued random variables with non-crossing paths. Assume that the following conditions hold:
\begin{itemize}
\item[$(I_1)$] Fix a deterministic countable dense set ${\mathcal D}$  of $\R^2$. For each $\x \in {\mathcal D}$, there exists $\pi_n^{\x} \in \Xi_n$ such that for any finite set of points $\x^1, \dotsc, \x^k \in {\mathcal D}$, as $n \to \infty$, we have
$(\pi^{\x^1}_n, \dotsc, \pi^{\x^k}_n)$ converges in distribution to $(W^{\x^1}, \dotsc, W^{\x^k} )$, where $(W^{\x^1}, \dotsc, W^{\x^k} )$ denotes coalescing Brownian motions starting from the points $\x_1, \ldots, \x_k$.
\item[$(B_1)$] For all $t>0$, $\limsup_{n\rightarrow \infty}\sup_{(a,t_0)\in\R^2}\P(\eta_{\Xi_n}(t_0,t;a,a+\epsilon)\geq 2)\rightarrow 0$  as $\epsilon\downarrow 0$.
\item[$(B_2)$] For all $t>0$, $\frac{1}{\epsilon}\limsup_{n\rightarrow\infty}\sup_{(a,t_0)\in \R^2}\P(\eta_{\Xi_n}(t_0,t;a,a+\epsilon)\geq 3)\rightarrow 0$ as $\epsilon\downarrow 0$.
\end{itemize}
Then $\Xi_n$ converges in distribution to the standard Brownian web ${\mathcal W}$ as $n \to \infty$.
\end{theorem}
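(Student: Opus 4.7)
The plan is to follow the program of Fontes, Isopi, Newman and Ravishankar, splitting the proof into two parts: tightness of $\{\Xi_n\}$ in $({\mathcal H}, d_{{\mathcal H}})$, and identification of every subsequential limit as ${\mathcal W}$. The simplification relative to the general criterion in \cite{FINR04} is that the non-crossing hypothesis allows $(B_1)$ to absorb the usual tightness assumption $(T_1)$, so no separate tightness input is needed.

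For tightness I would argue as follows. For non-crossing path collections, a subset $K \subset \Pi$ is precompact in $(\Pi, d_\Pi)$ as soon as one controls, for each spatial window $[a,b]$ and each macroscopic time horizon, how far the paths that pass through $[a,b]$ at time $t_0$ can spread by time $t_0 + t$. Condition $(B_1)$, combined with non-crossing, provides exactly this control: two paths of $\Xi_n$ that agree to within $\epsilon$ at time $t_0$ must stay ordered, and $(B_1)$ quantifies that the probability of their separating by a macroscopic amount at time $t_0 + t$ tends to zero with $\epsilon$. Pushing this through the Arzelà--Ascoli-type criterion in $({\mathcal H}, d_{{\mathcal H}})$ gives tightness of the laws of $\Xi_n$.

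Next I would identify any subsequential limit $\Xi_\infty$. Condition $(I_1)$ gives, for each finite family $\x^1, \dotsc, \x^k \in {\mathcal D}$, joint convergence of $(\pi_n^{\x^1}, \dotsc, \pi_n^{\x^k})$ to coalescing Brownian motions, so by the Skorohod representation theorem and the countability of ${\mathcal D}$ one may realize on a single probability space a coalescing Brownian skeleton $\{W^\x : \x \in {\mathcal D}\} \subset \Xi_\infty$. Using non-crossing one takes left/right limits at every $\y \in \R^2$; the resulting closure in $(\Pi,d_\Pi)$ is exactly a version of ${\mathcal W}$ by the characterization of the standard Brownian web in \cite{FINR04}. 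Thus $\Xi_\infty \supseteq {\mathcal W}$ in distribution, and it remains to exclude extra paths.

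The main obstacle is precisely this last step: showing that $\Xi_\infty$ contains no paths beyond the Brownian-web skeleton built from ${\mathcal D}$. The strategy is to compare the counting variables $\eta_{\Xi_\infty}(t_0, t; a, a+\epsilon)$ against the known asymptotics $\P(\eta_{{\mathcal W}}(t_0, t; a, a+\epsilon) \geq 2) = O(\epsilon/\sqrt{t})$. If $\Xi_\infty$ contained an extra path starting at some $(a^*, t_0^*)$ not arising as a limit of the ${\mathcal D}$-skeleton, then at any slightly later time this extra path would produce a third distinct position above what the Brownian web yields, forcing
\begin{equation*}
\liminf_{n\to\infty}\sup_{(a,t_0)}\P\bigl(\eta_{\Xi_n}(t_0, t; a, a+\epsilon) \geq 3\bigr) \geq c\,\epsilon
\end{equation*}
for some $c > 0$ and all sufficiently small $\epsilon$, contradicting $(B_2)$. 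Making this rigorous requires two technical inputs: an upper semicontinuity statement for $\eta$ along Hausdorff limits of non-crossing path collections (so that bounds on $\eta_{\Xi_n}$ pass to $\eta_{\Xi_\infty}$), and a dense-in-time reduction to exclude extra path-starts at every $(a,t_0)$ from a countable dense set, extended to all $(a,t_0)$ by approximation using $(B_1)$. This counting argument, which quantifies how $(B_2)$ is just strong enough to forbid any additional paths while $(B_1)$ alone is not, is where the core technical weight of the proof lies.
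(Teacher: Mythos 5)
This statement is Theorem 2.2 of \cite{FINR04}, quoted in the paper as an imported result; the paper gives no proof of it, so there is nothing internal to compare against. Judged on its own terms, your proposal follows the standard FINR04 architecture (tightness, then a coalescing Brownian skeleton from $(I_1)$, then exclusion of extra paths via $(B_2)$), and the second and third steps are in the right spirit, but the tightness step contains a genuine error.

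You derive tightness from $(B_1)$, claiming that $(B_1)$ ``quantifies that the probability of their separating by a macroscopic amount at time $t_0+t$ tends to zero with $\epsilon$.'' This misreads what $\eta_{\Xi_n}(t_0,t;a,a+\epsilon)$ counts: the event $\{\eta \geq 2\}$ means that the paths entering $[a,a+\epsilon]$ at time $t_0$ have \emph{not coalesced} by time $t_0+t$, not that any path has travelled far. A family in which every path through a small interval wanders an arbitrarily large distance but all such paths coalesce instantly would satisfy $(B_1)$ trivially while being badly non-tight. Tightness in $({\cal H},d_{{\cal H}})$ requires a uniform modulus-of-continuity estimate of type $(T_1)$ (control of the probability that some path crosses a fixed spatial gap in a short time), and $(B_1)$ supplies no such control. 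The correct route --- which the paper itself records in the paragraph immediately following the theorem --- is that for non-crossing families $(I_1)$ implies tightness: one sandwiches an arbitrary path of $\Xi_n$ between skeleton paths started from a fine grid in ${\cal D}$, and these converge to Brownian motions whose modulus of continuity is controlled (Proposition B.2 of \cite{FINR04}, Proposition 6.4 of \cite{SSS17}). In fact $(B_1)$ is itself redundant given $(I_1)$ and non-crossing (Theorem 6.5 of \cite{SSS17}), which is the opposite of the logical dependence you assert. The remainder of your outline --- realizing the coalescing Brownian skeleton inside a subsequential limit and using an expected-count comparison driven by $(B_2)$ to rule out extra paths, together with the semicontinuity of $\eta$ under Hausdorff limits --- is the standard argument and correctly identifies where the technical weight lies.
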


Let us first mention that for a sequence of $({\cal H}, {\cal B}_{{\cal H}})$-valued random variables $\{\Xi_n: n \in \N\}$ with non-crossing paths, Criterion $(I_1)$ implies tightness (see Proposition B.2 in the Appendix of \cite{FINR04} or Proposition 6.4 in \cite{SSS17}) and hence subsequential limit(s) always exists. Moreover, Criterion $(B_1)$ has in fact been shown to be redundant with $(I_1)$ for models with non-crossing paths (see Theorem 6.5 of \cite{SSS17}). Actually Condition $(I_1)$
implies that subsequential limit $\Xi$ contains coalescing Brownian motions 
starting from all rational vectors and hence contain a copy of the standard Brownian  web (${\mathcal W}$). Criterion $(B_2)$ ensures that the limiting random variable does not have extra paths and 
hence the limit must be ${\mathcal W}$.

Criterion $(B_2)$ is often verified by applying an FKG type correlation inequality
 together with an estimate on the distribution of the coalescence time between two paths. However, 
FKG is a strong property and very difficult to obtain for models  with complicate dependencies.
For a drainage network model with long range interactions Coletti \textit{et. al.} showed 
that FKG inequality holds partially and proved convergence to the Brownian web \cite{CFD09}.
We will follow a more robust technique developed in \cite{CSST19} and applicable for non-crossing path models. We apply Theorem 32 of \cite{CSST19} to 
obtain joint convergence for the DSF and its dual to the Brownian web and its dual.
\begin{theorem}
\label{thm:JtConvBwebGenPath}
Let $\{(\Xi_n, \widehat{\Xi}_n) : n\geq 1\}$ be a sequence of $({\cal H}\times\widehat{{\cal H}}, {\cal B}_{{\cal H}\times\widehat{{\cal H}}})$-valued random variables with non-crossing paths only, satisfying the following assumptions:
\begin{itemize}
\item[(i)] For each $n \geq 1$, paths in $\Xi_n$ do not cross (backward) paths in $\widehat{\Xi}_n$ almost surely: there does not exist any $\pi\in\Xi_n$, $\widehat{\pi}\in\widehat{\Xi}_n$ and $t_1,t_2\in (\sigma_{\pi},\sigma_{\widehat{\pi}})$ such that $(\widehat{\pi}(t_1) - \pi(t_1))(\widehat{\pi}(t_2) - \pi(t_2))<0$ almost surely.
\item[(ii)] $\{\Xi_n : n \in \N\}$ satisfies $(I_1)$.
\item[(iii)]  $\{(\widehat{\pi}_n(\sigma_{\widehat{\pi}_n}),\sigma_{\widehat{\pi}_n}) : \widehat{\pi}_n \in \widehat{\Xi}_n \}$, the collection  of starting points of all the backward paths in $\widehat{\Xi}_n$, as $n \to \infty$, becomes dense in $\R^2$.
\item[(iv)] For any sub sequential limit $({\cal Z},\widehat{{\cal Z}})$ of $\{(\Xi_n,\widehat{\Xi}_n) : n \in \N\}$, paths of ${\cal Z}$ do not spend positive Lebesgue measure time together with paths of $\widehat{{\cal Z}}$, i.e.,  almost surely there is no $\pi\in{\cal Z}$ and $\widehat{\pi}\in\widehat{{\cal Z}}$ such that $\int_{\sigma_\pi}^{\sigma_{\widehat{\pi}}} \mathbf{1}_{\pi(t)=\widehat{\pi}(t)} dt > 0$.
\end{itemize}
Then $(\Xi_n,\widehat{\Xi}_n)$ converges in distribution $({\cal W}, \widehat{{\cal W}})$ as $n \to \infty$.
\end{theorem}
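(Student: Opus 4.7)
The plan is to prove convergence by Prokhorov's theorem: first establish joint tightness of $\{(\Xi_n,\widehat{\Xi}_n):n\geq 1\}$ on $({\cal H}\times\widehat{\cal H},{\cal B}_{{\cal H}\times\widehat{\cal H}})$, then show that every weak subsequential limit must coincide with $({\cal W},\widehat{\cal W})$. Tightness of $\{\Xi_n\}$ on $({\cal H},{\cal B}_{\cal H})$ follows automatically from hypothesis (ii), via Proposition B.2 of \cite{FINR04} which applies to non-crossing path families satisfying $(I_1)$. For the backward family $\{\widehat{\Xi}_n\}$, the non-crossing hypothesis (i) allows us to sandwich each backward path between two nearby forward paths from $\Xi_n$, so the uniform modulus-of-continuity estimates that come with tightness of $\Xi_n$ transfer directly to $\widehat{\Xi}_n$. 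Joint tightness is then inherited from marginal tightness on the product Polish space.

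\textbf{Structure of subsequential limits.} Fix a weakly convergent subsequence with limit $({\cal Z},\widehat{\cal Z})$. By hypothesis (ii), ${\cal Z}$ contains coalescing Brownian paths starting from every point of the deterministic countable dense set ${\cal D}$; hence a copy of the standard Brownian web, call it ${\cal W}'$, satisfies ${\cal W}'\subseteq{\cal Z}$. The non-crossing property between forward and backward paths in (i) is preserved under the Hausdorff-type limit, so no path in ${\cal Z}$ crosses any path in $\widehat{\cal Z}$. Hypothesis (iii) guarantees that backward paths in $\widehat{\cal Z}$ start from a dense set of space-time points. Sandwiching any $\widehat{\pi}\in\widehat{\cal Z}$ between nearby coalescing Brownian paths of ${\cal W}'$ that start a small distance to its left and right then forces $\widehat{\pi}$ itself to be a Brownian motion with the correct variance, the sandwich being tight because the two forward paths coalesce rapidly.

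\textbf{Identification of the limits.} Once $\widehat{\cal Z}$ is known to consist of Brownian backward paths, starting from a dense subset of $\R^2$ and not crossing the forward paths of ${\cal W}'\subseteq{\cal Z}$, the Skorohod-reflection characterization of the dual Brownian web (see \cite{STW00} and Theorem 2.4 of \cite{SSS17}) forces $\widehat{\cal Z}=\widehat{\cal W}'$. The remaining task is to show ${\cal Z}={\cal W}'$, i.e.\ rule out any extra forward path. Suppose for contradiction that some $\pi\in{\cal Z}\setminus{\cal W}'$ exists. Since $\pi$ cannot cross any backward path of $\widehat{\cal Z}=\widehat{\cal W}'$, and since the starting points of $\widehat{\cal W}'$ are dense, we can sandwich $\pi$ between left and right dual paths at arbitrarily small spatial scale at any given time. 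Using the coalescence of the dual Brownian web together with this density-plus-sandwich argument, one shows that $\pi$ must agree with a dual path on a set of positive Lebesgue time, contradicting hypothesis (iv). Hence ${\cal Z}={\cal W}'$ and $({\cal Z},\widehat{\cal Z})\stackrel{d}{=}({\cal W},\widehat{\cal W})$, completing the argument.

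\textbf{Main obstacle.} The most delicate step is the last one, namely ruling out extra forward paths without recourse to an FKG-type correlation inequality. In the classical route to the $(B_2)$ criterion of \cite{FINR04}, positive association is crucial; but the perturbed lattice $V$ exhibits hyperuniformity and negative association at short scales, so FKG is unavailable. Hypothesis (iv) is designed precisely as the natural substitute: forcing a contradiction out of it requires carefully propagating the density of $\widehat{\cal Z}$ into forced coincidence of the hypothetical extra $\pi$ with some dual path on a set of positive Lebesgue measure. Making this quantitative, while using only the non-crossing structure and never invoking positive correlations, is where the bulk of the technical effort lies, and is the reason the abstract framework of \cite{CSST19} is needed in place of the classical one.
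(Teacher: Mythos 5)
First, note that the paper does not actually prove this statement: it is imported verbatim as Theorem 32 of \cite{CSST19}, with the remark that it generalizes Theorem 6.6 of \cite{SSS17} by replacing the wedge condition with hypothesis (iv). So there is no in-paper proof to compare against, and your proposal has to stand on its own. Its architecture is the right one and matches the known strategy: tightness of $\Xi_n$ from $(I_1)$ plus non-crossing (Proposition B.2 of \cite{FINR04}), tightness of $\widehat{\Xi}_n$ by sandwiching backward paths between nearby forward paths, a copy ${\cal W}'\subseteq {\cal Z}$ in any subsequential limit, identification of the dual limit by squeezing, and use of (iv) to exclude extra paths.

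The genuine gap is at the decisive step. You write that an extra path $\pi\in{\cal Z}\setminus{\cal W}'$ ``must agree with a dual path on a set of positive Lebesgue time'' and then defer the quantitative work to ``the abstract framework of \cite{CSST19}'' --- but that framework \emph{is} the theorem being proved, so as written the argument is circular exactly where it matters. Moreover, the mechanism you gesture at does not deliver the contradiction: dual paths started at $(\pi(t)\pm\epsilon,t)$ coalesce after a time of order $\epsilon^2$, so non-crossing squeezes $\pi$ only on that short interval above their coalescence time; an extra path passing through the coalescence point touches the merged dual path at a single time, which has Lebesgue measure zero and does not contradict (iv). The standard way (iv) enters is through a separation/counting argument: if the limit had, say, three paths through a small interval at time $t_0$ occupying distinct positions at time $t_0+t$, the dual paths started between consecutive positions cannot coalesce above $t_0$, because coalescence would pin the middle primal path to the merged dual path on an entire time interval, violating (iv); hence the number of distinct primal positions is controlled by dual Brownian web point statistics, giving the analogue of $(B_2)$. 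Separately, your assertion that the ``Skorohod-reflection characterization'' forces $\widehat{\cal Z}=\widehat{\cal W}'$ is too quick: at that stage the squeeze from deterministic starting points only yields $\widehat{\cal Z}\supseteq\widehat{\cal W}'$, and extra backward paths (e.g.\ ones running along forward paths without crossing them) must be excluded by an argument symmetric to the forward case, again invoking (iv); \cite{STW00} and Theorem 2.4 of \cite{SSS17} do not do this for you. Filling these two steps is precisely the content of Theorem 32 of \cite{CSST19}, so the proposal as it stands is an outline rather than a proof.
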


It is useful to mention here that there are several other approaches to replace Criterion $(B_2)$. 
Long before, Criterion $(E)$ was proposed by Newman et al \cite{NRS05} which is applicable even for  models with crossing paths. \cite{SSS17} provided  a new criterion in Theorem 6.6 replacing $(B_2)$, called the \textit{wedge condition}.  Theorem \ref{thm:JtConvBwebGenPath} 
 appears as a slight generalization of Theorem 6.6 of \cite{SSS17} by considering the joint convergence and it replaces the wedge condition by the fact that no limiting primal and dual paths can spend positive Lebesgue time together.  In the next section we show that the conditions of Theorem \ref{thm:JtConvBwebGenPath} hold for the diffusively scaled DSF and its dual 
 $\{({\cal X}_n,\widehat{{\cal X}}_n) : n \in \N \}$. Finally we make the following remark 
 regarding the existence of bi-infinite path for the perturbed DSF for $d=2$. 
 We mention that following the arguments arguments of Section 4 of \cite{GRS04}, which is based 
 on a Burton-Keane argument, one can show that there is no bi-infinite path for the 
 perturbed DSF a.s. for any $d\geq 2$.
 \begin{remark}
 \label{rem:Bi-infinited2}
 From the construction of the dual graph it is evident that the DSF has a bi-infinite path if and only if the dual graph is not connected. If there are scaled dual paths which do not coalesce but converge to coalescing Brownian motions then there must be
 scaled forward paths entrapped between these scaled dual paths. Further, the joint convergence to the double Brownian web $(\WW, \widehat{\WW})$ forces that there must be a limiting forward Brownian path approximating this sequence of entrapped forward scaled paths. Further this limiting forward Brownian path must spend positive Lebesgue measure time together with a backward Brownian path which
  contradicts the properties of $(\WW, \widehat{\WW})$.
\end{remark}

\subsection{Verification of conditions of Theorem \ref{thm:JtConvBwebGenPath}}
\label{sebsec:DSFBwebFinalVerification}

In this section, we show that the sequence of diffusively scaled path families $\{({\cal X}_n,\widehat{{\cal X}}_n) : n \geq 1\}$ obtained from the DSF and its dual forest satisfies the conditions in Theorem \ref{thm:JtConvBwebGenPath}.

Conditions $(i)$ and $(iii)$ of Theorem \ref{thm:JtConvBwebGenPath} hold by construction. Indeed, paths of ${\cal X}$ do not cross (backward) paths of $\widehat{{\cal X}}$ with probability $1$ and 
the same holds for ${\cal X}_n$ and $\widehat{{\cal X}}_n$ for any $n \geq 1$.
 Moreover, the collection $\{(\widehat{\pi}_n(\sigma_{\widehat{\pi}_n}),\sigma_{\widehat{\pi}_n}) : \widehat{\pi}_n \in \widehat{\Xi}_n\}$ of all starting points of the scaled backward paths
 in $\widehat{\Xi}_n$ becomes dense in $\R^2$ as $n \to \infty$.
 
We now show that the condition $(ii)$ holds for the sequence $\{{\cal X}_n : n\geq 1\}$, i.e., Criterion $(I_1)$ of Theorem \ref{thm:BwebConvergenceNoncrossing1}. We first focus on a single path, $\pi^{\mathbf{0}}$ starting at the origin.
The main ingredient here is the construction of i.i.d. pieces through (marginal) renewal steps.    
As shown in Proposition \ref{prop:SinglePt_Rwalk}, the sequence of renewal steps
 $\{ \widehat{g_{\tau_j}}(\mathbf{0}) : j \geq 2\}$ breaks down the path
 $\pi^{\mathbf{0}}$ into independent pieces. Let us 
 scale $\pi^{\mathbf{0}}$ into $\pi^{\mathbf{0}}_n$ as in (\ref{defi:ScaledPath}) with
\begin{align}
\label{def:SigmaLambda}
\sigma^2 := \text{Var} \bigl( Y_2 = \widehat{g_{\tau_2}}(\mathbf{0})(1) -  
\widehat{g_{\tau_1}}(\mathbf{0})(1) \bigr)  \; \text{ and } \; 
\gamma := \E \bigl( \widehat{g_{\tau_2}}(\mathbf{0})(2) -  
\widehat{g_{\tau_1}}(\mathbf{0})(2)\bigr).
\end{align}
From now on, the diffusively scaled sequence $\{{\cal X}_n : n\geq 1\}$ is considered w.r.t. these parameters, but for ease of writing, we drop $(\gamma,\sigma)$ from our notation. Proposition  \ref{prop:SinglePt_Rwalk} together with Corollary \ref{cor:FirstCoOrdRW}  allow us to apply 
Donsker's invariance principle to show that $\pi^{\mathbf{0}}_n$ converges in distribution in $(\Pi,d_{\Pi})$ to $B^{\mathbf{0}}$ a standard Brownian motion started at $\mathbf{0}$ (for a similar argument see Proposition 5.2 in \cite{RSS16}).

While working with  multiple paths the essential idea is that the 
multiple paths of the DSF, when they are far away, can be approximated as independent DSF paths. 
Using Proposition \ref{prop:DSF_CoalescingTimeTail} we can show that 
when two DSF paths are close enough, they coalesce quickly. This strategy was 
first developed by Ferrari et al in \cite{FFW05} to deal with dependent paths with bounded range interactions. In \cite{CFD09}, Coletti \text{et al.} first extended this technique 
for dependent paths with long range interactions. Several other papers studied convergence to the Brownian web 
for dependent paths with long range interactions (\cite{SS13},  \cite{CV14}, \cite{RSS16}, \cite{VZ17}, \cite{CSST19}). 
In this paper we follow a robust tool developed in \cite{RSS16}  and \cite{CSST19} to deal with long range interactions in non-Markovian set up. 
Since the proof here is the same to that of \cite{CSST19} we do not provide 
the details here and for more details we refer the reader to  Section 5.1 of \cite{RSS16}  and Section 6.2.1 of \cite{CSST19}.

To show condition $(iv)$, we mainly follow Section 6.2.2 of \cite{CSST19} 
and  the coalescence time estimate given in Proposition \ref{prop:DSF_CoalescingTimeTail} 
serves as a key ingredient. Let $({\cal Z}, \widehat{\cal Z})$ be any subsequential limit of $\{({\cal X}_n, \widehat{\cal X}_n): n \geq 1\}$. By Skorohod's representation theorem we may assume that the convergence happens almost surely. With slight abuse of notation we continue to denote that subsequence 
by $\{({\cal X}_n, \widehat{\cal X}_n): n\geq 1\}$.

We have to prove that, with probability $1$, paths in ${\cal Z}$ do not spend positive Lebesgue measure time together with the dual paths in $\widehat{\cal Z}$. This means that for any $\delta>0$ and any integer $m\geq 1$, the probability of the event
\begin{equation*}
A(\delta, m) := \left\lbrace
\begin{array}{c}
\text{$\exists$ paths $\pi\in {\cal Z}, \widehat{\pi}\in \widehat{\cal Z}$ and $t_0\in\R$ s.t. $-m<\sigma_{\pi}<t_0<t_0 + \delta<\sigma_{\widehat{\pi}}<m$} \\
\text{and $-m<\pi(t) = \widehat{\pi}(t)<m$ for all $t\in [t_0, t_0+\delta]$}
\end{array}
\right\rbrace
\end{equation*}
has to be $0$. We recall that ${\cal Z}$ contains coalescing Brownian paths starting from every point in $\Q^2$ which is dense in $\R^2$. The non-crossing nature of paths in ${\cal Z}$ ensures that the above event $A(\delta, m)$ is measurable w.r.t. the countable Brownian paths in ${\cal Z}$ starting from all points in  $\Q^2$ .

To show that $\P(A(\delta,m))=0$, we introduce a generic event $B^{\epsilon}_n(\delta, m)$ defined as follows. Given an integer $m\geq 1$ and $\delta,\epsilon>0$,
\begin{equation*}
B^{\epsilon}_n(\delta, m) := \left\lbrace
\begin{array}{c}
\text{$\exists$ paths $\pi_1^n,\pi_2^n,\pi_3^n \in {\cal X}_n$ s.t. $\sigma_{\pi_1^n},\sigma_{\pi_2^n}\leq 0$, $\sigma_{\pi_3^n} \leq \delta$ and $\pi_1^n(0),\pi_1^n(\delta)\in [-m,m]$} \\
\text{with $|\pi_1^n(0)-\pi_2^n(0)|<\epsilon$ but $\pi_1^n(\delta) \not= \pi_2^n(\delta)$} \\
\text{and with $|\pi_1^n(\delta)-\pi_3^n(\delta)|<\epsilon$ but $\pi_1^n(2\delta) \not= \pi_3^n(2\delta)$} 
\end{array}
\right\rbrace ~.
\end{equation*}
The event $B^{\epsilon}_n(\delta, m)$ means that there exists a path $\pi_1^n$ localized in $[-m,m]$ at time $0$ as well as at time $\delta$ which is approached (within distance $\epsilon$) by two paths $\pi_2^n$ and $\pi_3^n$ respectively at times $0$ and $\delta$ while still being different from them respectively at time $\delta$ and $2\delta$. 
We comment that for our model the measurability of the event $B^{\epsilon}_n(\delta, m)$ is not a problem as the paths in ${\cal X}_n$ are countable. 

It was shown in Section 6.2.2 of \cite{CSST19} that to show $\P(A(\delta, m) = 0)$ it suffices 
to prove the following lemma.
\begin{lemma}
\label{lem:coalescenceDSF}
For any integer $m\geq 1$, real numbers $\epsilon,\delta>0$, there exists a constant $C_0(\delta,m)>0$ (only depending on $\delta$ and $m$) such that for all large $n$,
\begin{equation*}
\P( B^{\epsilon}_n (\delta, m) ) \leq C_0(\delta, m) \, \epsilon ~.
\end{equation*}
\end{lemma}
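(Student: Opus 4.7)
The plan is to follow the strategy of Section 6.2.2 of \cite{CSST19}, taking as its essential input the coalescing-time tail bound from Proposition \ref{prop:DSF_CoalescingTimeTail}. In diffusively rescaled coordinates, that estimate reads: two scaled DSF paths starting at the same (scaled) height at distance at most $\epsilon$ fail to coalesce within time $\delta$ with probability at most $O(\epsilon/\sqrt{\delta})$, where the implicit constant is absolute.

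First I would cover $[-m, m]$ by $O(m/\epsilon)$ disjoint subintervals $I_k$ of length $\epsilon$ and decompose $B^{\epsilon}_n(\delta, m) \subseteq \bigcup_k A_k$, where $A_k$ is the restriction of the event to $\pi_1^n(0) \in I_k$. For a fixed $k$, the non-crossing property of the paths in ${\cal X}_n$ implies that, up to a choice of left/right neighbour, the candidate $\pi_2^n$ is determined by $\pi_1^n$; the existence of some $\pi_2^n$ with $\sigma_{\pi_2^n} \leq 0$, $|\pi_1^n(0) - \pi_2^n(0)| < \epsilon$ and $\pi_1^n(\delta) \neq \pi_2^n(\delta)$ reduces to the event that the left or right nearest scaled path within distance $\epsilon$ of $\pi_1^n$ at time $0$ does not coalesce with $\pi_1^n$ before time $\delta$. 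By Proposition \ref{prop:DSF_CoalescingTimeTail} this has probability at most $C\epsilon/\sqrt{\delta}$, and the analogous bound at time $\delta$ controls the event involving $\pi_3^n$.

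The crucial step is to factorize these two ``bad approach'' events. The joint exploration process of Proposition \ref{prop:DSFMarkov} is Markov, and by Proposition \ref{prop:BetaExpTail} renewal steps occur within any macroscopic interval of (scaled) time with overwhelming probability. I would locate a renewal step $\tau^\star$ for the joint exploration of $\pi_1^n$ together with the candidate paths approaching it at time $\delta$, situated within (scaled) distance $O(1/n)$ of $\delta$. Restarting the joint process at $\tau^\star$ via the renewal structure of Section \ref{sec:RW_Martingale} makes its evolution after $\tau^\star$ conditionally independent of the pre-$\tau^\star$ history, which in turn governs whether $\pi_2^n$ has coalesced with $\pi_1^n$ by time $\delta$. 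Applying the coalescing-time bound to the restarted configuration produces a factor $C\epsilon/\sqrt{\delta}$ that is asymptotically independent of the first, yielding $\P(A_k) \leq C'(\delta)\epsilon^2/\delta$ uniformly in $k$ and (large) $n$. Summing over the $O(m/\epsilon)$ intervals gives
\[
\P\bigl(B^{\epsilon}_n(\delta, m)\bigr) \leq \frac{C m}{\delta}\,\epsilon,
\]
which is of the desired form with $C_0(\delta, m) = O(m/\delta)$.

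The main obstacle is the rigorous justification of the factorization above. Unlike the independent-path setting, the paths $\pi_1^n, \pi_2^n, \pi_3^n$ live in a shared environment of perturbed lattice vertices, so the conditional independence must be extracted through the renewal structure of Section \ref{sec:RW_Martingale} rather than a natural filtration. The exponential tail of renewal gaps from Proposition \ref{prop:BetaExpTail} controls the error coming from realisations whose renewal step fails to land sufficiently close to the target time $\delta$, and translation invariance of the model together with the uniformity of the coalescing-time tail bound supplies the required uniform estimates over $k$ and over the initial configuration at time $0$. I would handle these error terms exactly as in Section 6.2.2 of \cite{CSST19}, which the paper explicitly invokes.
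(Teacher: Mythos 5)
Your overall skeleton---two successive applications of Proposition \ref{prop:DSF_CoalescingTimeTail}, a decomposition into $O(m/\epsilon)$ pieces (the paper works instead with the $O(mn)$ unscaled lattice starting sites $l$ on the line $t=0$, but the arithmetic is the same), and a factorization of the two non-coalescence events giving order $\epsilon^2/\delta$ per piece---matches the paper's computation and yields the right form $C_0(\delta,m)\,\epsilon$. The gap is exactly at the step you flag as the main obstacle: the factorization. You propose to extract conditional independence from a renewal step $\tau^\star$ of the joint exploration process near time $\delta$, but the second bad event is not about the continued evolution of the tracked paths $\pi_1^n,\pi_2^n$. By non-crossing it reduces (as in the paper) to the non-coalescence of two \emph{newly started} DSF paths flanking the random location $\pi_1^n(\delta)$ at horizontal distance about $n\epsilon\sigma$, and the law of these new paths given the past is not decoupled by a renewal of the $(\pi_1^n,\pi_2^n)$ exploration. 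In addition, Proposition \ref{prop:DSF_CoalescingTimeTail} is proved for paths started from lattice points in a fresh environment; applying it to a configuration conditioned through ${\cal G}_{\tau^\star}$ (which retains the $N(\cdot)$-neighbourhood information from the renewal) is not licensed by its statement, and you do not explain how the randomness of the location $\pi_1^n(\delta)$ is handled uniformly.

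The paper's decoupling mechanism is different and more elementary. The first event, recorded together with the landing position $\pi^{(l,0)}(\lfloor n^2\gamma\delta\rfloor)=k$ (one sums over $k$, using that these events are disjoint in $k$), is measurable with respect to the perturbed points below height $\lfloor n^2\gamma\delta\rfloor+m_d$, because by Lemma \ref{lemma:UnexploredCone} and Lemma \ref{lem:HistoryHeightBound} the explored history has uniformly bounded height. The second event is then replaced by the non-coalescence of flanking paths restarted at height $\lfloor n^2\gamma\delta+n^\beta\rfloor$ and positions $k\mp\lfloor n\epsilon\sigma\mp n^\alpha\rfloor$, at the cost of the fluctuation event $F_n(k)$ whose complement has sub-exponentially small probability; this modified event depends only on points above that height, hence is exactly independent of the first for large $n$, so the unconditional Proposition \ref{prop:DSF_CoalescingTimeTail} applies directly and gives the factor $C_3(\delta)\epsilon$, while summing over $k$ and over the $O(mn)$ sites $l$ (per-site non-coalescence probability $O(1/(n\sqrt{\delta}))$) gives the other factor. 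To salvage your route you would need a renewal-conditioned version of the coalescing-time bound plus a decoupling statement for newly started paths near a random location; neither is available in the paper, whereas the bounded-height argument with $F_n(k)$ and the $n^\beta$ shift avoids both.
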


For the proof of Lemma \ref{lem:coalescenceDSF} we essentially follow \cite{CSST19} but the discrete nature of the perturbed point process makes the proof slightly easier. It 
is useful to mention that still we have to deal with the non-Markovian nature of DSF paths.

\begin{proof}[Proof of Lemma \ref{lem:coalescenceDSF}]

We recall that the DSF paths are non-crossing. For the paths $\pi^n_1, \pi^n_2 \in {\cal X}_n$ we assume that $\pi^n_1(0) < \pi^n_2(0) $. Now, for the DSF paths starting from the lattice points $(\lfloor \pi^n_1(0) n \sigma \rfloor - 1, 0)$ and $(\lfloor \pi^n_2(0) n \sigma \rfloor + 1, 0)$, we still have that  
$$
\pi^{(\lfloor \pi^n_1(0) n \sigma \rfloor - 1, 0)}(\lfloor n^2 \gamma \delta \rfloor) \neq \pi^{(\lfloor \pi^n_2(0) n \sigma \rfloor + 1, 0)}(\lfloor n^2 \gamma \delta \rfloor).
$$
Because of this observation, we can say that the event  $ B^{\epsilon}_n $ is contained in the following event 
$ D^{\epsilon}_n $ involving unscaled paths where: 
\begin{align*}
D^{\epsilon}_n  & := \bigl\{ \text{there exist }x, y, z \in \Z 
\text{ such that }  x \in [ - m n \sigma  -1,  m n \sigma + 1], 
| x - y | < n \epsilon \sigma \text{ and } \\ 
& \pi^{(x, - \delta)}(\lfloor n^2 \gamma \delta \rfloor) 
 \neq \pi^{(y, -\delta)}(\lfloor n^2 \gamma \delta \rfloor),
  | \pi^{(x,-\delta)}(\lfloor n^2 \gamma \delta \rfloor) - z | < n \epsilon \sigma,
 \pi^{(x,-\delta)}(2\lfloor n^2 \gamma \delta \rfloor) \neq 
\pi^{(z,-\delta)}(2\lfloor n^2 \gamma \delta \rfloor)\bigr\}.
\end{align*}
For $ \omega \in D^{\epsilon}_n$, suppose $ x , y $ are as in the definition above and assume that 
$ x < y $. Set $ l  = \max \{ x + j : \pi^{(x, 0)}(\lfloor n^2 \gamma \delta \rfloor)  = 
\pi^{(x+j, 0)}(\lfloor n^2 \gamma \delta \rfloor) \}
$.  Clearly, $ -  m n \sigma  - 1 \leq  x \leq l < y \leq  ( m+\epsilon) n \sigma + 1$
 and $ \pi^{(x, 0)}(\lfloor n^2 \gamma \delta \rfloor)
 = \pi^{(l, 0)}(\lfloor n^2 \gamma \delta \rfloor)
  < \pi^{(l+1,0)}(\lfloor n^2 \gamma \delta \rfloor)
  \leq \pi^{(y, 0)}(\lfloor n\delta \rfloor) $.
    Assume that $\lfloor \pi^{(x, 0)}(\lfloor n^2 \gamma \delta \rfloor) \rfloor = k  $ for some $ k \in \Z $. 
Then, $z$ in the definition above satisfies $ z \in (k - n \epsilon \sigma, k + n \epsilon \sigma)$
and by non-crossing property of paths we must have that 
$$
\pi^{( k -  \lfloor n \epsilon \sigma  \rfloor- 1,\lfloor n^2 \gamma \delta \rfloor )}(2\lfloor n^2 \gamma \delta \rfloor) \neq 
\pi^{(k +  \lfloor n \epsilon \sigma  \rfloor+ 1, \lfloor n^2 \gamma \delta \rfloor  )}(2\lfloor n^2 \gamma \delta \rfloor)
$$
 Thus, we must have $ \omega \in H^{(L)} (n, \delta, \epsilon) $ where  for $ l \in \Z $, 
\begin{align*}
  H_{l, k}^{(L)} (n, \delta, \epsilon) := & \bigl\{ \pi^{(l, 0)}(\lfloor n^2 \gamma \delta \rfloor) = k \neq \pi^{(l+1,0)}(\lfloor n^2 \gamma \delta \rfloor) \text{ and } \\
& \pi^{( k -  \lfloor n \epsilon \sigma  \rfloor- 1,\lfloor n^2 \gamma \delta \rfloor )}(2\lfloor n^2 \gamma \delta \rfloor) \neq 
\pi^{(k +  \lfloor n \epsilon \sigma  \rfloor+ 1, \lfloor n^2 \gamma \delta \rfloor  )}(2\lfloor n^2 \gamma \delta \rfloor)  \bigr\}; \\
H^{(L)} (n, \delta, \epsilon) := &  \cup_{ l =  - \lfloor  2 m n \sigma \rfloor  }
^{ \lfloor  2 m n \sigma \rfloor} \cup_{k \in \Z } H_{l, k}^{(L)} (n, \delta, \epsilon)  .  
\end{align*}

Similarly for $ \omega \in D^{\epsilon}_n$ such that  $ x > y $, set $ r  = \min \{ x - j : \pi^{(x, 0)}(\lfloor n^2 \gamma \delta \rfloor)   =  \pi^{(x-j, 0)}(\lfloor n^2 \gamma \delta \rfloor)   \} $. 
As earlier, $ \omega \in H^{(R)} (n, \delta, \epsilon) $ where 
for $ r \in \Z $, 
\begin{align*}
H_{r, k}^{(R)} (n, \delta, \epsilon) := & \bigl\{ \pi^{(r, 0)}(\lfloor n^2 \gamma \delta \rfloor)
= k \neq \pi^{(r-1, 0)}(\lfloor n^2 \gamma \delta \rfloor) \text{ and } \\
& \pi^{( k -  \lfloor n \epsilon \sigma  \rfloor- 1,\lfloor n^2 \gamma \delta \rfloor )}(2\lfloor n^2 \gamma \delta \rfloor) \neq 
\pi^{(k +  \lfloor n \epsilon \sigma  \rfloor+ 1, \lfloor n^2 \gamma \delta \rfloor)}(2\lfloor n^2 \gamma \delta \rfloor)   \bigr\}; \\
H^{(R)} (n, \delta, \epsilon) := & \cup_{ r =  - \lfloor 2 m n \sigma \rfloor  }
^{ \lfloor  2 m n \sigma \rfloor}  \cup_{k \in \Z } H_{r, k}^{(R)} (n, \delta, \epsilon) .  
\end{align*}

Thus, $ D^{\epsilon}_n \subseteq H^{(L)} (n, \delta, \epsilon) \cup H^{(R)} (n, \delta, \epsilon) $.
We fix $0 < 2\beta < \alpha < 1$. 
For $k \in \Z$, we define the event  $F_{n}(k)$ as
\begin{align*}
F_{n}(k) := \big\{ & k - \lfloor n \epsilon \sigma - n^{\alpha} \rfloor \leq \pi^{( k -  \lfloor n \epsilon \sigma  \rfloor- 1,\lfloor n^2 \gamma  \delta \rfloor)}(\lfloor n^2 \gamma \delta  + n^{\beta} \rfloor) \\
& \quad \leq \pi^{( k +  \lfloor n \epsilon \sigma  \rfloor + 1,\lfloor n^2 \gamma \delta \rfloor -\delta)}
(\lfloor n^2 \gamma \delta  + n^{\beta} \rfloor) \leq 
k + \lfloor n \epsilon \sigma - n^{\alpha} \rfloor \big\} ~.
\end{align*}
The event $F_{n}(k)$ asks that the two paths starting at $( k -  \lfloor n \epsilon \sigma  \rfloor- 1,\lfloor n^2 \gamma \delta \rfloor -\delta)$ and $( k +  \lfloor n \epsilon \sigma  \rfloor + 1,\lfloor n^2 \gamma \delta \rfloor -\delta)$ do not fluctuate too much till the  time $\lfloor n^2 \gamma \delta  + n^{\beta} \rfloor$. By Lemma \ref{lemma:UnexploredCone} we have that 
the DSF paths starting from the line $y = 0$ 
do not explore anything in the positive half-plane  $\mathbb{H}^+(\lfloor n^2 \gamma \delta \rfloor + m_d)$ 
till crossing the line $y = \lfloor n\delta \rfloor$ and hence for large $n$, do not explore anything  in the half-plane  
$\mathbb{H}^+(\lfloor n^2 \gamma \delta \rfloor + n^\beta)$. We observe 
 that on the event $F_n(k)^{c}$, at least one of the two paths starting from $( k -  \lfloor n \epsilon \sigma  \rfloor- 1,\lfloor n^2 \gamma \delta \rfloor )$ and $( k +  \lfloor n \epsilon \sigma  \rfloor + 1,\lfloor n^2 \gamma \delta \rfloor)$ 
 admits fluctuations larger than $n^{\alpha}$ on the time interval $[\lfloor n^2 \gamma \delta \rfloor,
 \lfloor n^2 \gamma \delta + n^{\beta}\rfloor]$. Following the same arguments as in Proposition 35 
 of \cite{CSST19}, we have that this event has a probability smaller than $C_0 \exp{(-C_1n^{(\alpha-\beta)/2)})}$. 
This gives us that  uniformly in $k$,
 the probability of the event $(F_{n}(k))^c $ decays to $0$ sub-exponentially.
 Hence in order to study the asymptotic behaviour of the probabilities it 
 suffices to focus on the event $D^{\epsilon}_n \cap F_n(k)$. 
 
 This motivates us to consider, 
\begin{align*}
  H_{l, k}^{(L),1} (n, \delta, \epsilon) := & \bigl\{ \pi^{(l, 0)}(\lfloor n^2 \gamma \delta \rfloor) = k \neq \pi^{(l+1, 0)}(\lfloor n^2 \gamma \delta \rfloor) \text{ and } \\
& \pi^{( k -  \lfloor n \epsilon \sigma  - n^{\alpha} \rfloor,\lfloor n^2 \gamma \delta + n^\beta \rfloor)}(2\lfloor n^2 \gamma \delta \rfloor) \neq 
\pi^{(k +  \lfloor n \epsilon \sigma + n^\alpha \rfloor, \lfloor n^2 \gamma \delta + n^\beta \rfloor )}(2\lfloor n^2 \gamma \delta \rfloor)  \bigr\}; \\
H^{(L),1} (n, \delta, \epsilon) := &  \cup_{ l =  - \lfloor  2 m n \sigma \rfloor  }
^{ \lfloor  2 m n \sigma \rfloor} \cup_{k \in \Z } H_{l, k}^{(L),1} (n, \delta, \epsilon)  .  
\end{align*}
Similarly the events $ H_{l, k}^{(R),1} (n, \delta, \epsilon)$ and $H^{(R),1} (n, \delta, \epsilon)$
are defined. Now we have 
$ D^{\epsilon}_n \cap F_n(k)
\subseteq H^{(L),1} (n, \delta, \epsilon) \cup H^{(R),1} (n, \delta, \epsilon) $.

We note that for all large $n$, the events $ \{ \pi^{(l, 0)}(\lfloor n^2 \gamma \delta \rfloor) = k \neq \pi^{(l+1, 0)}(\lfloor n^2 \gamma \delta \rfloor) \} $ and $ \{ \pi^{( k -  \lfloor n \epsilon \sigma  - n^{\alpha} \rfloor,\lfloor n^2 \gamma \delta + n^\beta \rfloor )}(2\lfloor n^2 \gamma \delta \rfloor) \neq 
\pi^{(k +  \lfloor n \epsilon \sigma + n^\alpha \rfloor, \lfloor n^2 \gamma \delta + n^\beta \rfloor )}(2\lfloor n^2 \gamma \delta \rfloor)  \} $ as the latter event depends only on 
perturbed points in $\Gamma(\lfloor n^2 \gamma \delta + n^\beta \rfloor)$.  
Proposition \ref{prop:DSF_CoalescingTimeTail} gives us that for all large $n$,
\begin{align*}
\label{eqn: GRScoalesceTimeTail}
 & \P \{ \pi^{( k -  \lfloor n \epsilon \sigma  - n^{\alpha} \rfloor,\lfloor n^2 \gamma \delta + n^\beta \rfloor )}(2\lfloor n^2 \gamma \delta \rfloor) \neq 
\pi^{(k +  \lfloor n \epsilon \sigma + n^\alpha \rfloor, \lfloor n^2 \gamma \delta + n^\beta \rfloor)}(2\lfloor n^2 \gamma \delta \rfloor) \} \\
& \leq  \frac{C_2 (2\lfloor n \sigma \epsilon \rfloor) }{\sqrt{ \lfloor n^2 \gamma  \delta 
- n^\beta \rfloor}}
  \leq C_3 (\delta) \epsilon 
 \end{align*}
where $ C_2, C_3 (\delta)  > 0 $ are constants. Hence, 
\begin{align*}
 \P ( H_{l, k}^{(L),1} (n, \delta, \epsilon)) \leq C_3 (\delta) \epsilon  ~  \P  \{  \pi^{(l,0)}(\lfloor n\delta \rfloor) = k \neq \pi^{(l+1, 0)}(\lfloor n\delta \rfloor)  \} . 
\end{align*}

Now, the events $ \{ \pi^{(l, 0)}(\lfloor n\delta \rfloor) = k \neq \pi^{(l+1, 0)}(\lfloor n\delta \rfloor)  \} $ are disjoint for distinct values of $ k $. Hence, 
\begin{align*}
\P   \bigl( \cup_{k \in \Z } H_{l, k}^{(L),1} (n, \delta, \epsilon) 
\bigr) & \leq \sum_{ k \in \Z} \P  \bigl( H_{l, k}^{(L),1} (n, \delta, \epsilon) \bigr) \\
& \leq C_3 (\delta) \epsilon  \sum_{ k \in \Z}  \P  \{  \pi^{(l, -\delta)}(\lfloor n^2 \gamma \delta \rfloor) = k \neq \pi^{(l+1, -\delta)}(\lfloor n^2 \gamma \delta \rfloor) \}  \\
& = C_3 (\delta) \epsilon ~  \P  \{  \pi^{(l, -\delta)}(\lfloor n^2 \gamma \delta \rfloor) \neq \pi^{(l+1, -\delta)}(\lfloor n^2 \gamma \delta \rfloor) \}.
\end{align*}
The above argument also holds for $  \cup_{k \in \Z } H_{r, k}^{(R),1} (n, \delta, \epsilon)$. 
Thus, combining the above terms and applying Proposition \ref{prop:DSF_CoalescingTimeTail}
\begin{align*}
\P ( D^{\epsilon}_n \cap F_n(k) ) & \leq  \P   \bigl( H^{(L),1} (n, \delta, \epsilon)  \bigr)  +  \P   \bigl( H^{(R),1} (n, \delta, \epsilon)  \bigr) \\
 & \leq  \sum_{ l =  - \lfloor 2 m n \sigma \rfloor  }^{ \lfloor  2 m n \sigma \rfloor} 
\P   \bigl( \cup_{k \in \Z } H_{l, k}^{(L),1} (n, \delta, \epsilon) \bigr) +  
\sum_{ r =  - \lfloor 2 m n \sigma \rfloor  }^{ \lfloor  2 m n \sigma \rfloor} 
\P   \bigl( \cup_{k \in \Z } H_{r, k}^{(R),1} (n, \delta, \epsilon) \bigr)   \\
& \leq 16 m \sqrt{n} \sigma  C_3 ( \delta ) \epsilon C_2 / \sqrt{\lfloor n^2 \gamma \delta \rfloor } \leq C_1 ( \delta, m ) \epsilon 
\end{align*}
for a proper choice of $ C_1 ( \delta, m) $. 
This completes the proof.
\end{proof}

{\bf Acknowledgements:}
We are grateful to the referees for their careful reading of the manuscript and their illuminating comments and suggestions. S.G. was supported in part by the MOE grant R-146-000-250-133.

\end{document}